\newtheorem{defn}{Definition}
\newtheorem{remark}{Remark}
\newtheorem{theorem}{Theorem}
\newtheorem{lemma}{Lemma}
\newtheorem{cor}{Corollary}
\newtheorem{example}{Example}
\crefname{lemma}{Lemma}{Lemmas}
\newcommand{\cu}{\operatorname{curl}}
\newcommand{\Hmm}[1]{\leavevmode{\marginpar{\tiny%
$\hbox to 0mm{\hspace*{-0.5mm}$\leftarrow$\hss}%
\vcenter{\vrule depth 0.1mm height 0.1mm width \the\marginparwidth}%
\hbox to
0mm{\hss$\rightarrow$\hspace*{-0.5mm}}$\\\relax\raggedright #1}}}
\newcommand{\di}{\operatorname{div}}
\newcommand{\R}{{\mathbb{R}}}
\newcommand{\N}{{\mathbb{N}}}
\newcommand{\xn}{X_{\rm \scriptscriptstyle N}}
\title{Spectral stability of the $curl curl $ operator via uniform Gaffney inequalities on perturbed  electromagnetic cavities }
\author{Pier Domenico Lamberti\footnote{corresponding author }  and Michele Zaccaron}
\begin{document}

\maketitle

\noindent
{\bf Abstract:}  We prove  spectral stability results for the $curl curl$ operator subject to electric boundary conditions on a cavity upon boundary perturbations. The cavities are assumed to be sufficiently smooth but we impose weak restrictions on the strength of the perturbations. 
The methods are of variational type and  are based on two main ingredients: the construction of suitable Piola-type transformations between domains and 
the proof of uniform Gaffney inequalities obtained by means of uniform a priori  $H^2$-estimates for the Poisson problem of the Dirichlet Laplacian. The uniform a priori estimates are proved by using the results of V. Maz'ya and T. Shaposhnikova  based on Sobolev multipliers. Connections to  boundary  homogenization problems are also indicated. 

\vspace{11pt}

\noindent
{\bf Keywords:}  Maxwell's equations,  spectral stability, cavities, shape sensitivity, boundary homogenization.

\vspace{6pt}
\noindent
{\bf 2010 Mathematics Subject Classification:} 35Q61, 35Q60,  35P15.

\section{Introduction}

In this paper we study the spectral stability of the $curl curl $ operator on an electromagnetic  cavity $\Omega$  in $\R^3$  upon  perturbation of the shape of $\Omega$. 
The cavity  $\Omega$ is  a bounded  connected open set (shortly, a bounded domain), the boundary of which is enough regular  to 
guarantee the validity of  the celebrated Gaffney inequality, that is 
\begin{equation}\label{gaffintro}
\norm{u}_{H^1(\Omega)^3} \le C\left( \norm{u}_{L^2(\Omega)^3} + \norm{\cu u}_{L^2(\Omega)^3} + \norm{\di u}_{L^2(\Omega)}  \right)
\end{equation}
for all vector fields $u\in L^2(\Omega)^3$ with distributional $\cu u  \in L^2(\Omega)^3$ and  $\di u \in L^2(\Omega)$, and satisfying the so-called electric boundary conditions
$$
\nu \times u=0, \ {\rm on}\ \partial\Omega \, .
$$
Here $\nu $ denotes the unit outer normal to $\partial\Omega$ and $H^1(\Omega)$ is the standard Sobolev space of functions in $L^2(\Omega)$ with first order weak derivatives in $L^2(\Omega)$. 
It is classical that the   Gaffney inequality holds for domains $\Omega$ with  boundaries of class  $C^{2}$, but the regularity can be relaxed in order to 
include boundaries of class $C^{1,\beta}$ with $\beta >1/2$, see  \cite{fil, PrFil15}. 

 The eigenvalue problem under consideration is 
\begin{equation}\label{main}
\left\{
\begin{array}{ll}
\cu\, \cu u = \lambda u,& \ \ {\rm in}\ \Omega ,\\
\nu \times u = 0 ,& \ \ {\rm on}\ \partial \Omega ,
\end{array}
\right.
\end{equation}
 and is immediately derived from  the  time-harmonic Maxwell's equations 
\begin{equation}\label{Maxwelltimeharm}
\cu E - {\rm i} \, \omega  \, H = 0\,,\,\,\cu H + {\rm i} \, \omega  \, E = 0\, ,
\end{equation}
where $E, H$ denote the spatial parts of the electric and the magnetic field respectively and  $\omega  > 0$ is the angular frequency.  
Indeed, taking the curl in the first equation of  
\eqref{Maxwelltimeharm} and setting $\lambda =\omega^2$, one immediately obtains problem \eqref{main}. Note that here the medium filling $\Omega$ is homogeneous and isotropic and for simplicity  the corresponding electric permittivity $\varepsilon$  and magnetic permeability   $\mu$   have been normalized by setting $\varepsilon =\mu =1$.
The boundary conditions are   those of a perfect conductor,  namely $\nu \times E=0 \ \ {\rm and }\ \  H\cdot \nu =0$. Thus, the vector field $u$ in \eqref{main} plays the role of the electric field $E$ (similarly, the magnetic field would satisfy the same equation but with the other boundary conditions $u\cdot \nu =0$ and $\nu \times \cu\,  u =0 $).

We observe that   the study of electromagnetic cavities is quite important in applications, for example in designing cavity resonators or shielding structures for electronic circuits,   see e.g.,  \cite[Chp.~10]{hanyak}.  We also refer to  
  \cite{ces, kihe, monk, ned, rsy} for details and references concerning the mathematical theory of electromagnetism.  See also  \cite{co91, cocoercive, coda, coda2, lamstra, pauly, yin}.

The spectrum of problem \eqref{main} is discrete and consists of a divergent sequence of positive  eigenvalues $\lambda_n[\Omega]$ of finite multiplicity. 

In this paper, we study the dependence of $\lambda_n[\Omega]$ and the corresponding eigenfunctions  upon variation of $\Omega$. 
It seems to us that  very little is known in the literature.  The case of domain perturbations of the form $\Phi (\Omega)$ where $\Phi $ is a regular diffeomorphism 
from $\Omega$ to $\Phi (\Omega )$ are considered in \cite{jimbo} and \cite{lamzac} where differentiability results and Hadamard type formulas for shape derivatives are proved.  We also quote the pioneering work \cite{hira}  where the Hadamard formula was found  on the base of heuristic computations.  We  note that shape derivatives are used in inverse electromagnetic scattering in  \cite{he1, he2, he3}. 

The aim of the present paper is to prove spectral  stability results under less stringent assumptions on the families of domain perturbations. For this purpose, we adopt the approach of  \cite{arrlam} further developed in \cite{arrferlam, ferra, ferralamb, ferlam}. 

Given a fixed domain $\Omega$, we consider a family of domains $\Omega_{\epsilon}$, $\epsilon>0$ converging to $\Omega$  as $\epsilon \to 0$. The convergence of $\Omega_{\epsilon}$ to $\Omega$ will be described by means of a fixed atlas ${\mathcal{A}}$, that is a finite collection of rotated parallelepipeds 
$V_j$, $j=1,\dots , s$  covering the domains under consideration and such that if $V_j$ touches the boundaries of the domains then $\Omega\cap V_j$ and $\Omega_{\epsilon}\cap V_j$ are given by the subgraphs of two functions $g_j, g_{\epsilon ,j}$ in two variables, say $\bar x= (x_1,x_2)$.  Thus the convergence of $\Omega_{\epsilon}$ to $\Omega$ is understood in terms of the convergence of $g_{\epsilon ,j}$ to $g_j$ as $\epsilon \to 0$. 

It is not surprising that if $g_{\epsilon ,j}$ converges uniformly to $g_j$ together with its first and second derivatives as $\epsilon \to 0$ (in which case one talks of $C^2$-convergence) then we have spectral stability of the $curl curl $ operator, which means that the  eigenvalues and eigenfunctions of the problem in $\Omega_{\epsilon}$ converge to those in $\Omega$ as $\epsilon \to 0$. It is also not surprising that if $g_{\epsilon ,j}$ converges uniformly to $g_j$ together with its first derivatives 
and 
\begin{equation}\label{unic2}
\sup_{\epsilon >0}\sup_{ \bar x\in \R^2} |D ^2g_{\epsilon ,j } (\bar x )| \ne \infty 
\end{equation}
then we have spectral stability again.  (These results are also immediate consequences of the results of the present paper.) The main  question here  is whether it is possible to relax condition \eqref{unic2}.  For example, if we assume that $g_{\epsilon , j}$ is of the form 
\begin{equation}\label{model}
g_{\epsilon , j} =\epsilon^{\alpha }b_j(\bar x / \epsilon )
\end{equation}
where $\alpha >0$ and $b_j$ is a fixed $C^{1,1}$ function, condition $\eqref{unic2}$ is encoded by the inequality $\alpha \geq 2$. In this model case, the question
is whether one can get spectral stability for $\alpha <2$.  Note that a profile of the form \eqref{model} is typical in the study of boundary homogenization problems 
and thin domains, see for example \cite{arrferlam, arrlam, arrvil, capevi, casado, ferralamb, ferlam, ferlam2}.

This problem was solved for the biharmonic operator with intermediate boundary conditions (modelling an elastic hinged plate) in \cite{arrlam} where condition \eqref{unic2} is relaxed by introducing a  suitable notion of weighted convergence which allows to prove spectral stability for $\alpha >3/2$ in the model problem above. That condition is described here in \eqref{assumptions}.   It is remarkable that the threshold $3/2$ is sharp since for $\alpha \le 3/2 $ spectral stability does not occur for the probelm discussed in \cite{arrlam} (in particular, it is proved in \cite{arrlam} that for $\alpha <3/2$ a degeneration phenomenon occurs and for $\alpha =3/2$ a strange term in the limit appears, as in many homogenization problems). An analogous  trichotomy  is found in \cite{ferlam} for the biharmonic operator subject to certain Steklov type boundary conditions. 

In this paper, we prove that the relaxed  convergence  \eqref{assumptions}  guarantees the spectral stability of the $curl curl $ operator. Our result requires that the Gaffney inequality  \eqref{gaffintro}
 holds for all domains $\Omega_{\epsilon}$ with a constant $C$ independent of $\epsilon$. Again, if one does not assume the validity of 
the uniform bound \eqref{unic2}, then proving that a uniform  Gaffney inequality holds  is highly non-trivial.  Here we manage to do this, by exploting 
the approach of   \cite[Ch.~14]{Mazya}  based on the use of Sobolev multipliers and the notion of domains of class $\mathcal{M}^{3/2}_2(\delta)$. 
In particular, if we assume that 
\begin{equation}\label{unic32}
 |\nabla g_{\epsilon,j } (\bar x) -\nabla g_{\epsilon,j } (\bar y)|\le M |\bar x-\bar y|^{\beta }
\end{equation}
for all $ \bar x, \bar y\in \R^2$, with $\beta \in ]1/2,1]$ and $M$ independent of $\epsilon$, and we also assume that the $\sup$-norms of functions $|\nabla g_{\epsilon, j } |$  are sufficienlty small, then our domains belong to the class $\mathcal{M}^{3/2}_2(\delta)$  with $\delta $ small enough. This allows  to apply \cite[Thm.~14.5.1]{Mazya}
which guarantees the validity of  a uniform $H^2$-\,a priori estimate for the Dirichlet Laplacian which, in turn,  is  equivalent to the uniform Gaffney inequality.

In conclusion, the convergence of the domains $\Omega_{\epsilon}$ to $\Omega$ in the sense of \eqref{assumptions} combined with the validity of \eqref{unic32} 
and the smallness of the gradients of the profile functions $g_{\epsilon, j}$ guarantees the spectral stability of the $curl curl $ operator. 
Note that, in principle, since $\Omega$ is of class $C^1$ one may think of choosing from the very beginning an atlas which guarantees that the gradients 
of the profile functions are as small as required (indeed, it is enough to adapt the atlas to the tangent planes of  a  sufficiently big number of boundary points of $\Omega$). Then the convergence in the sense of \eqref{assumptions} would imply the smallness of the gradients of the profile functions of $\Omega_{\epsilon}$ as well.

By setting $\beta = \alpha-1$, we deduce that  a uniform Gaffney inequality holds for the example provided by \eqref{model} if $\alpha > 3/2$. Moreover, if $\alpha >3/2$ then spectral stability occurs for the same example   since in this case also the  convergence \eqref{assumptions} occurs. 

The case $\alpha \le 3/2$ is more involved and we plan to address it in a forthcoming paper, see Remark \ref{finalremark}.
We note that if $\alpha <3/2$  one cannot expect the validity of uniform Gaffney inequalities, in particular because the regularity assumptions $C^{1,\beta}$ for  $\beta >1/2$  is optimal for the validity of the Gaffney inequality itself, see \cite{fil, PrFil15}.   

One of the main tools used in this paper is a Piola-type transform which allows to pull back  functions from $\Omega$ to $\Omega_{\epsilon}$ preserving 
the boundary conditions.  In particular the transformation depends on $\epsilon$ and  is constructed in such a way that  for any fixed  compact set $K$ contained in $\Omega\cap\Omega_{\epsilon}$, it  does not modify the values of the  vector fields on $K$ for $\epsilon$ sufficiently small.  Our Piola transform is constructed by 
pasting together local Piola transforms defined in each local chart of the atlas and for this reason it is called here {\it Atlas Piola transform}.  We believe that our construction has its own interest. 

This paper is organized as follows. Section 2 is devoted to preliminaries and notation concerning the atlas classes, the functions spaces and the weak formulations
of our problems. Section 3 is devoted to the construction of the Atlas Piola transform and to the proof of its main properties, see Theorem~\ref{Piolamain}. 
In Section 4 we prove our main stability theorem, namely Theorem~\ref{principale}. 
Section 5 is devoted to the proof 
of uniform a priori estimates and uniform Gaffney inequalities - see Corollaries~\ref{aprioricorol}, \ref{aprioricorol2} -   and  contains the corresponding applications to the spectral stability problems, see Theorems~\ref{principaleholder}, \ref{mainhomo}.

\section{Preliminaries and notation}

\subsection{Classes of open sets}

In this paper we consider open sets $\Omega$ in $\R^N$, in particular  in $\R^3$, with sufficiently regular boundaries. This means that $\Omega$ can be described in a neighborhood of any point of the boundary as the subgraph  of a sufficiently regular  function $g$ defined in a local system of orthogonal coordinates. The regularity of $\Omega$
depends on the regularity of the functions $g$.  Since we aim at studying domain perturbation problems, following \cite{burlam} and \cite{ferlam}, we find convenient 
to use the notion of {\it atlas}, that is a collection ${\mathcal{A}}$ of rotated parallelepipeds $V_j$, $j=1, \dots , s$,  which cover $\Omega$ and such that if $V_j $ touches the boundary of $\Omega$ then $\Omega\cap V_j$ is a subgraph of a function $g_j$.  The parallelepipeds will also be called {\it local charts}.
More precisely, in Definition \ref{atlas} below the atlas $\mathcal{A}$ is defined as $(\rho,s,s',\{V_j\}_{j=1}^s, \{r_j\}^s_{j=1})$ where $s$ is the total number of cuboids used to cover $\Omega$, $s'$ is the number of cuboids touching the boundary of $\Omega$, $V_j$ are the cuboids, $r_j$ are the rotations used to change variables in the representations of the local charts, and $\rho$ is a parameter controlling the minima and maxima of the functions $g_j$.
Note that in this paper the atlas ${\mathcal{A}}$ will be often fixed, while the functions $g_j$, hence $\Omega$, will be perturbed.

Given a set $V\subset \mathbb{R}^N$ and  a parameter $\rho >0$, we write
$V_\rho :=\set{x \in V : d(x, \partial V) > \rho}$.
\begin{defn} \label{atlas}
Let $\rho>0, s,s' \in \mathbb{N}, s' \leq s$ and $\{V_j\}_{j=1}^s$ be a family of bounded open cuboids (i.e. rotations of rectangle parallelepipeds in $\mathbb{R}^N$) and $\{r_j\}^s_{j=1}$ be a family of rotations in $\mathbb{R}^N$. We say that $\mathcal{A}=(\rho,s,s',\{V_j\}_{j=1}^s, \{r_j\}^s_{j=1}$) is an atlas in $\mathbb{R}^N$ with parameters $\rho,s,s',\{V_j\}_{j=1}^s, \{r_j\}^s_{j=1}$, briefly an atlas in $\mathbb{R}^N$.

A bounded domain $\Omega \subset \mathbb{R}^N$ is said to be of  class $C^{k,\gamma}_M(\mathcal{A})$ with $k \in \mathbb{N} \cup \{0\}$, $\gamma \in [0,1]$ and $M>0$ if it satisfies the following conditions:
\begin{enumerate}[label=(\roman*),font=\upshape]
\item $\Omega \subset \bigcup_{j=1}^s (V_j)_\rho$ and $(V_j)_\rho \cap \Omega \neq \emptyset$;
\item $V_j \cap \partial\Omega \neq \emptyset$ for $j=1,\dots,s'$ and $V_j \cap \partial \Omega = \emptyset$ for $s'+1 \leq j \leq s$;
\item for $j=1,\dots,s$ we have
$$r_j(V_j)=\set{x \in \mathbb{R}^N : a_{ij} < x_i < b_{ij}, i=1,\dots , N},$$
for $j=1,\dots,s'$ we have
$$r_j(V_j \cap \Omega) = \set{x=(\bar{x},x_N) \in \mathbb{R}^N : \bar{x}\in W_j, a_{Nj}<x_N<g_j(\bar{x})},$$
where $\bar{x}=(x_1,x_2)$,
$$W_j=\set{\bar{x}\in \mathbb{R}^{N-1}, a_{ij}<x_i<b_{ij}, i=1,\dots , N-1}$$
and the functions $g_j \in C^{k,\gamma}(\overline{W_j})$ for any $j=1,\dots, s'$. Moreover, for $j=1,\dots,s'$  
$$a_{Nj}+\rho\leq g_j(\bar{x})\leq b_{Nj} - \rho$$
for all $\bar{x}\in \overline{W_j}$.
\item $$\sup_{|\alpha| \le k}\| D^{\alpha}g_j \|_{L^{\infty}(W_j)}+ \sup_{|\alpha | =k}\sup_{\substack{\bar x, \bar y \in W_j\\ \bar x\ne \bar y  }}\frac{|D^{\alpha}g_j(\bar x)-D^{\alpha}g_j(\bar y)|}{|\bar x -\bar y|^{\gamma }}\le M$$
for $j=1,\dots,s'$.
\end{enumerate}
We say that $\Omega$ is of class $C^{k,\gamma}(\mathcal{A}) $   if it is of class $C^{k,\gamma}_M(\mathcal{A}) $ for some $M>0$; we say that $\Omega$  is of class   $C^{k,\gamma}$ if it is of class  $C^{k,\gamma}(\mathcal{A})$ for some atlas $\mathcal{A}$.
\end{defn}

\subsection{Function spaces}

In this section, we recall basic facts and notation for the function spaces that will be used in the following. We refer e.g., to \cite[Ch.~2]{gira} for more details. 

Here by  $\Omega$ we denote a bounded domain - that is, a bounded connected open set -  in $\R^3$.  Since the differential problems under consideration are associated  with self-adjoint operators,  the space  $L^2(\Omega)$ is  understood  here as a space of real-valued functions and is endowed with the scalar product $\int_{\Omega}u\cdot v \, dx $ defined for all vector fields  $u,v\in L^2(\Omega)^3$.    The space of vector fields
$u\in  L^2(\Omega)^3 $ with distributional curl in  $L^2(\Omega)^3$ is denoted by   $H(\cu, \Omega)$  and is  endowed with the norm defined by 
$$ ||u||_{H(\cu, \Omega)} = \left( ||u||^{2}_{L^2(\Omega)^3 }    + ||\cu u||^{2}_{L^2(\Omega)^3 } \right)^{1/2} $$ 
for all $u\in H(\cu, \Omega)$.  The closure in $H(\cu, \Omega)$ of the space of ${\mathcal{C}}^{\infty}$-functions with compact support in $\Omega$ is denoted by 
$H_{0}(\cu, \Omega)$. The following lemma characterizes the space $H_0(\operatorname{curl},\Omega)$ and 
 is analogous to the well-known characterization of  the Sobolev space $H^1_0(\Omega)$ (see e.g., \cite{bre}). 
We include a short proof. 
Here by $v^0$ we denote the extension-by-zero of a vector field $v$, that is 
$$v^0 =
\begin{cases}
	v & \text{if }x \in \Omega,\vspace{1mm} \\
	0 & \text{if }x \in \mathbb{R}^3 \setminus \Omega\, .
\end{cases}
$$

\begin{lemma} \label{extbyzero}
Let $\Omega$ be a bounded open set of class $C^{0,1}$ and  $u \in H(\operatorname{curl},\Omega)$. Then 
 $u \in H_0(\operatorname{curl},\Omega)$ if and only if 
 $u^0\in H(\operatorname{curl},\mathbb{R}^3)$, in which  case $\operatorname{curl}(u^0) = (\operatorname{curl}u)^0$.
\end{lemma}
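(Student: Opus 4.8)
The plan is to mimic the classical proof of the analogous characterization $H^1_0(\Omega) = \{u \in H^1(\Omega) : u^0 \in H^1(\mathbb{R}^3)\}$, adapting it to the curl operator. Recall that one direction is essentially an integration-by-parts identity and the other uses a density/mollification argument.

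First I would prove the easy implication: suppose $u \in H_0(\operatorname{curl},\Omega)$, so there is a sequence $u_n \in C_c^\infty(\Omega)^3$ with $u_n \to u$ in $H(\operatorname{curl},\Omega)$. Then $u_n^0 \in C_c^\infty(\mathbb{R}^3)^3$ with $\operatorname{curl}(u_n^0) = (\operatorname{curl} u_n)^0$, and clearly $u_n^0 \to u^0$ in $L^2(\mathbb{R}^3)^3$ while $(\operatorname{curl} u_n)^0 \to (\operatorname{curl} u)^0$ in $L^2(\mathbb{R}^3)^3$; since $\operatorname{curl}$ is continuous from $L^2$ into distributions, the limit of $\operatorname{curl}(u_n^0)$ is $\operatorname{curl}(u^0)$, so $\operatorname{curl}(u^0) = (\operatorname{curl} u)^0 \in L^2(\mathbb{R}^3)^3$, i.e. $u^0 \in H(\operatorname{curl},\mathbb{R}^3)$ and the identity holds.

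For the converse, assume $u^0 \in H(\operatorname{curl},\mathbb{R}^3)$ with $\operatorname{curl}(u^0) = (\operatorname{curl} u)^0$. I would first verify, by testing against $\varphi \in C_c^\infty(\mathbb{R}^3)^3$ and splitting the integral over $\Omega$ and $\mathbb{R}^3 \setminus \Omega$, that the hypothesis $\operatorname{curl}(u^0) = (\operatorname{curl} u)^0$ is precisely what makes the boundary terms vanish — this is automatic here since it is part of the assumption. The goal is then to approximate $u^0$ by functions in $C_c^\infty(\Omega)^3$. The standard device is a two-step procedure: (i) translate $u^0$ slightly \emph{inward} to get compact support strictly inside $\Omega$, then (ii) mollify. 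Because $\Omega$ is of class $C^{0,1}$, its boundary is locally a Lipschitz subgraph, so one can use a partition of unity subordinate to an atlas: near each boundary chart, in the rotated coordinates where $\Omega \cap V_j$ lies below the graph of $g_j$, translating in the direction $-e_N$ by $t>0$ pushes the support of the piece of $u^0$ down to a set whose closure (intersected with $V_j$) lies in $\Omega$; one checks that such translates converge to $u^0$ in $H(\operatorname{curl},\mathbb{R}^3)$ as $t \to 0^+$ (translation is continuous on $L^2$ and commutes with $\operatorname{curl}$). After translation the support is compactly contained in $\Omega$, so mollifying with a standard mollifier of small enough radius keeps the support inside $\Omega$ and produces $C^\infty$ functions converging in $H(\operatorname{curl})$; the diagonal sequence lies in $C_c^\infty(\Omega)^3$ and converges to $u$ in $H(\operatorname{curl},\Omega)$, giving $u \in H_0(\operatorname{curl},\Omega)$.

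The main obstacle — and the point requiring genuine care rather than routine work — is the interaction between the translation step and the Lipschitz (not $C^1$) regularity of the boundary: one must choose the translation direction compatibly chart-by-chart and control the overlap regions, and one must make sure the translated-and-mollified functions still vanish near $\partial\Omega$ uniformly. A clean way to package this is to note that $\operatorname{curl}$ commutes with translations and mollifications exactly as the gradient does, so the \emph{entire} argument is formally identical to the $H^1_0$ case with $\nabla$ replaced by $\operatorname{curl}$; the only genuinely new input is the commutation identity $\operatorname{curl}(u^0) = (\operatorname{curl} u)^0$, which is given in the hypothesis for the converse and established in the first paragraph for the forward direction. I would therefore present the forward direction in full and, for the converse, either reproduce the translation-mollification scheme briefly or invoke the standard Lipschitz-domain argument (e.g. as in \cite{gira}) with the substitution $\nabla \rightsquigarrow \operatorname{curl}$ made explicit.
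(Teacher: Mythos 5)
Your proposal is correct, and the easy direction (from $u\in H_0(\operatorname{curl},\Omega)$ to $u^0\in H(\operatorname{curl},\mathbb{R}^3)$ with the commutation identity) is exactly the limit argument the paper dismisses as ``straightforward.'' For the hard direction, however, you take a genuinely different route from the paper. The paper tests $u^0$ against $\varphi\in (C_c^\infty(\mathbb{R}^3))^3$ supported first in $\Omega$ and then in $\mathbb{R}^3\setminus\bar\Omega$ to identify $\operatorname{curl}(u^0)$ with $(\operatorname{curl}u)^0$, rewrites this as the integration-by-parts identity $\int_\Omega u\cdot\operatorname{curl}\varphi\,dx=\int_\Omega\operatorname{curl}u\cdot\varphi\,dx$ for all $\varphi\in(C_c^\infty(\mathbb{R}^3))^3$, and then concludes in one line by citing \cite[Lemma 2.4]{gira}, which says precisely that this identity characterizes $H_0(\operatorname{curl},\Omega)$ on a Lipschitz domain. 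You instead propose to prove the density statement yourself, by localizing with a partition of unity subordinate to the atlas, translating each boundary piece inward along the graph direction, and mollifying. This is the standard mechanism underlying the cited lemma, so nothing is wrong; what the paper's route buys is a two-line proof that delegates all the Lipschitz-chart technicalities (choice of translation parameter relative to $\operatorname{dist}(\operatorname{supp}\psi_j,\partial V_j)$, behaviour on overlaps) to the reference, while your route buys self-containedness and makes visible exactly where the $C^{0,1}$ hypothesis is used --- at the cost that the technical core is only sketched, and you yourself end by offering to fall back on the same citation. One small point of hygiene: in the converse you phrase $\operatorname{curl}(u^0)=(\operatorname{curl}u)^0$ as part of the hypothesis, whereas in the lemma it is a conclusion; it must be \emph{derived} from $u^0\in H(\operatorname{curl},\mathbb{R}^3)$ by the localization of test functions you allude to (this derivation is the first half of the paper's proof), so state it as a preliminary step rather than an assumption.
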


\begin{proof}
Suppose that $u^0$ belongs to $H(\operatorname{curl},\mathbb{R}^3)$. Thus, there exists $v \in (L^2(\mathbb{R}^3))^3$ such that 
\begin{equation} \label{weak:curl}
\int_ \Omega u \cdot \operatorname{curl}\varphi \, dx = \int_{\mathbb{R}^3} v \cdot \varphi \, dx \qquad \text{for all }\varphi \in (C^\infty_c(\mathbb{R}^3))^3.
\end{equation}
Since it holds in particular for all test functions $\varphi \in (C^\infty_c(\Omega))^3$, then necessarily $v=\operatorname{curl}u$ on  $\Omega$. On the other hand, since we can take any $\varphi \in (C^\infty_c(\mathbb{R}^3 \setminus \bar{\Omega}))^3,$ we see that $v=0$  outside $\Omega$. Hence we can rewrite \eqref{weak:curl} as follows
\begin{equation} \label{weak:curl:Omega}
\int_ \Omega u \cdot \operatorname{curl}\varphi \, dx = \int_{\Omega} \operatorname{curl}u \cdot \varphi \, dx \qquad \text{for all }\varphi \in (C^\infty_c(\mathbb{R}^3))^3.
\end{equation} 
By \cite[Lemma 2.4]{gira}  it follows that $u \in H_0(\operatorname{curl},\Omega)$. The converse implication is straighforward.
\end{proof}

We note that if $\Omega $ is sufficiently regular, say of class $C^{0,1}$, the space $H_0(\operatorname{curl}, \Omega)$ coincides with the set of square integrable vector fields in $\Omega$ whose curl is also square integrable, and such that their tangential trace at the boundary $\partial \Omega$ is zero (see \cite[Thm. 2.12]{gira}). In particular, we have that 
$$H_0(\operatorname{curl},\Omega) \cap (C^\infty(\bar{\Omega}))^3 = \set {u \in (C^\infty(\bar{\Omega}))^3 : \nu \times u\rvert_{\partial \Omega} =0},$$ 
where $C^\infty(\bar{\Omega})$ denotes smooth compactly supported functions of $\mathbb{R}^3$ restricted to $\bar{\Omega}$.

We denote by $ H(\di, \Omega) $ the space of vector fields
$u\in  L^2(\Omega)^3 $ with distributional divergence in  $L^2(\Omega)^3$, endowed with the norm defined by 
$$ ||u||_{H(\di, \Omega)} = \left( ||u||^{2}_{ L^2(\Omega)^3} + ||\di u||^{2}_{L^2(\Omega)} \right)^{1/2} $$
for all $u\in H(\di, \Omega)$. 
Finally, we set 
$$
   ||u||_{  X_{\rm \scriptscriptstyle  }(\Omega)  }\! =\!   \left( ||u||^{2}_{  L^2(\Omega)^3} + ||\cu u||^{2}_{L^2(\Omega)^3} + ||\di u||^{2}_{L^2(\Omega)} \right)^{1/2} ,
$$
and we consider the space 
$$
 X_{\rm \scriptscriptstyle N}(\Omega) := H_0(\cu, \Omega) \cap H (\di, \Omega)
$$
endowed  with the norm defined above, that is  $ ||u||_{  X_{\rm \scriptscriptstyle  N}(\Omega)  }\! =\!    ||u||_{  X_{\rm \scriptscriptstyle  }(\Omega)  }\! $
for all $u\in  X_{\rm \scriptscriptstyle  N}(\Omega) $.  
We also set  $X_{\rm \scriptscriptstyle  N}(\di 0, \Omega) := \{u \in X_{\rm \scriptscriptstyle  N}(\Omega) : \di u = 0 \,\, {\rm in}\ \Omega \}$.



Recall that $H^1(\Omega)$ is the standard Sobolev space of functions in $L^2(\Omega)$ with first order weak derivatives in $L^2(\Omega)$. 
The celebrated Gaffney inequality allows to prove that the space $\xn (\Omega) $ is continuously embedded into the space $H^1(\Omega)^3$ provided 
 $\Omega$ is  sufficiently regular.  Namely, we have the following result, see e.g.  \cite[Theorem~3.7]{gira}.

\begin{theorem}\label{gaffneythm} Let $\Omega$ is  a bounded  open set in $\R^3$ of class $C^{1,1}$.  Then $ X_{\rm \scriptscriptstyle N}( \Omega)$ is continuously embedded into
$H^1(\Omega)^3$,  and there exists
$C>0$ such that the Gaffney inequality 
\begin{equation}\label{gaff}\| u\|_{H^1(\Omega)^3}\le C   ||u||_{  X_{\rm \scriptscriptstyle N}(\Omega)  },\end{equation}
holds for all $u\in   X_{\rm \scriptscriptstyle N}( \Omega)$. 
\end{theorem}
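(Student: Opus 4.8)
The plan is to reduce the vector Gaffney inequality to a scalar $H^2$-regularity statement for the Dirichlet Laplacian via a Helmholtz-type decomposition, which is the standard route (and the one the paper announces it will follow in Section 5). First I would recall that any $u \in \xn(\Omega) = H_0(\cu,\Omega) \cap H(\di,\Omega)$ admits a decomposition $u = \nabla \phi + w$, where $\phi \in H^1_0(\Omega)$ solves the Poisson problem $-\Delta \phi = -\di u$ in $\Omega$ with homogeneous Dirichlet data (this is legitimate because $\di u \in L^2(\Omega)$ and $\nu \times u = 0$ forces the right compatibility with the zero boundary condition on $\phi$), and $w := u - \nabla \phi$ then satisfies $\di w = 0$, $\cu w = \cu u$, and $\nu \times w = 0$ on $\partial\Omega$ since $\nabla\phi$ has vanishing tangential trace. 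So it suffices to bound $\|\nabla\phi\|_{H^1}$ and $\|w\|_{H^1}$ separately in terms of $\|u\|_{\xn(\Omega)}$.

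The term $\|\nabla \phi\|_{H^1(\Omega)^3} = \|\phi\|_{H^2(\Omega)}$ (up to lower-order terms) is controlled by the a priori $H^2$-estimate for the Dirichlet Laplacian: $\|\phi\|_{H^2(\Omega)} \le C(\|\Delta\phi\|_{L^2(\Omega)} + \|\phi\|_{L^2(\Omega)}) \le C'\|\di u\|_{L^2(\Omega)} + \text{(Poincaré)}\, C'\|\di u\|_{L^2(\Omega)}$, valid for $\Omega$ of class $C^{1,1}$. The divergence-free, curl-$L^2$, tangentially-vanishing field $w$ is the genuinely electromagnetic part: here one invokes the compact-support regularity theory for $H_0(\cu,\Omega)\cap H(\di 0,\Omega)$ on Lipschitz-or-better domains — e.g. one extends $w^0$ to $\R^3$ (using Lemma~\ref{extbyzero}, $\cu(w^0) = (\cu w)^0 \in L^2(\R^3)^3$ and $\di(w^0) = (\di w)^0 = 0$ since $\nu\cdot w$ need not vanish — wait, $\di w^0$ may carry a surface term, so this global extension must instead be done locally in each chart), and in each boundary chart $V_j$ one flattens the boundary by the $C^{1,1}$ diffeomorphism $x \mapsto (\bar x, x_N - g_j(\bar x))$, transforming the electric boundary condition into the vanishing of tangential components on a flat piece, where the scalar $H^2$-estimate for a half-space Dirichlet/Neumann Laplacian applies componentwise. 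Gluing the interior estimate with the boundary-chart estimates via a partition of unity subordinate to $\mathcal{A}$ yields $\|w\|_{H^1(\Omega)^3} \le C(\|w\|_{L^2} + \|\cu w\|_{L^2}) \le C(\|u\|_{L^2} + \|\nabla\phi\|_{L^2} + \|\cu u\|_{L^2})$, and $\|\nabla\phi\|_{L^2} \le C\|\di u\|_{L^2}$.

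Combining the two bounds gives $\|u\|_{H^1(\Omega)^3} \le \|\nabla\phi\|_{H^1} + \|w\|_{H^1} \le C\|u\|_{\xn(\Omega)}$, which is \eqref{gaff}, and continuity of the embedding $\xn(\Omega) \hookrightarrow H^1(\Omega)^3$ follows because the inequality is exactly the statement that the inclusion map is bounded. The main obstacle is the regularity of $w$ near the boundary: while the Helmholtz decomposition itself is routine, justifying the componentwise $H^2$/$H^1$ bootstrap for $w$ after flattening requires care, because the flattening diffeomorphism is only $C^{1,1}$ (its Jacobian is merely Lipschitz), so one lands on a flat domain but with an elliptic operator having Lipschitz coefficients — precisely the borderline case where $H^2$-regularity for the Dirichlet Laplacian still holds, and this is the step that dictates the $C^{1,1}$ hypothesis. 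The paper's Section 5 will later replace $C^{1,1}$ charts by the weaker Sobolev-multiplier class $\mathcal{M}^{3/2}_2(\delta)$ precisely to push this same estimate through under the relaxed $C^{1,\beta}$ regularity with $\beta > 1/2$; for the present theorem the classical reference \cite[Theorem~3.7]{gira} already packages all of this, so an alternative and shorter route is simply to cite it — but the decomposition sketch above is the argument underlying that reference.
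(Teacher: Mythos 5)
The paper itself does not prove Theorem~\ref{gaffneythm}: it quotes it verbatim from \cite[Theorem~3.7]{gira}, so your closing fallback (``simply cite it'') is literally the paper's entire argument, and on that level you agree with the paper. Your decomposition $u=\nabla\phi+w$ and the reduction of the gradient part to the $H^2$ a priori estimate for the Dirichlet Laplacian are also sound and consistent with the machinery the paper sets up later: Theorem~\ref{mainequi} proves exactly this equivalence, though it goes through the Birman--Solomyak splitting $u=Pu+Qu$ of \cite[Thm.~4.1]{birsol}, in which the ``regular'' summand $Pu$ comes already equipped with an $H^1$ bound, so that \emph{only} the scalar $H^2$ estimate remains to be checked. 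That is the decisive structural difference from your sketch, and it is why the paper never has to face the step where your argument has a real gap.

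The gap is the $H^1$ bound for the divergence-free part $w\in H_0(\cu,\Omega)\cap H(\di 0,\Omega)$. This is not a softer sub-problem: it is the Gaffney inequality itself, merely restricted to solenoidal fields, and your sketch does not prove it. The global extension by zero fails for the reason you yourself note ($\di(w^0)$ acquires a surface distribution because $\nu\cdot w$ need not vanish, so Lemma~\ref{extbyzero} only controls the curl). The local flattening argument then asserts that ``the scalar $H^2$-estimate for a half-space Dirichlet/Neumann Laplacian applies componentwise,'' but after flattening the three components of the transformed field are coupled both through the first-order operators $\cu$ and $\di$ (whose coefficients now involve $\nabla g_j$) and through the boundary conditions (two tangential components vanish, the normal one does not), so there is no decoupled scalar Poisson problem to which a componentwise estimate could apply. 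One must instead verify ellipticity-with-complementing-condition for the resulting system, or use the classical integration-by-parts identity expressing $\|\nabla w\|_{L^2}^2$ in terms of $\|\cu w\|_{L^2}^2+\|\di w\|_{L^2}^2$ plus a boundary curvature term (which is where the $C^{1,1}$ hypothesis genuinely enters), followed by a density argument. None of this is supplied. As a self-contained proof the proposal therefore does not close; as a proof by reference to \cite[Theorem~3.7]{gira} it coincides with the paper, and if you want a decomposition-based argument within the paper's own toolkit, the route of Theorem~\ref{mainequi} via \cite{birsol} is the one that avoids re-proving the solenoidal case from scratch.
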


By the previous theorem it immediately follows that the space $\xn (\Omega) $ is compactly embedded into  $L^2(\Omega)^3$, since this is true for the space 
$H^1(\Omega)^3$.

As we shall see, the regularity assumptions on $\Omega$ in Theorem~\ref{gaffneythm} can be relaxed since the inequality holds for domains of class $C^{1, \beta}$ with $\beta \in ]1/2, 1]$, but some care is required, see Section~\ref{unifsec}.

\subsection{Weak formulations and resolvent operators}

Since for our purposes we prefer to work in the space $\xn (\Omega)$ rather than in the space $\xn(\di\,  0, \Omega) $, following \cite{cocoercive, coda},  we introduce  a penalty term
in the equation and  we replace problem  \eqref{main} by the problem
\begin{equation}\label{mainpen}
\left\{
\begin{array}{ll}
\cu\, \cu u  -\tau \nabla \di u = \lambda u,& \ \ {\rm in}\ \Omega ,\\
\di u =0,& \ \ {\rm on}\ \partial \Omega ,\\
\nu \times u = 0 ,& \ \ {\rm on}\ \partial \Omega ,
\end{array}
\right.
\end{equation}
where $\tau $ is any fixed positive real number.

It is easy to see that  problem \eqref{mainpen} can be formulated  in the weak sense as follows
\begin{equation}
\label{mainpenweak}
\int_{\Omega}\cu u \cdot \cu \varphi \, dx+\tau \int_{\Omega} \di u\,  \di \varphi \,dx=\lambda \int_{\Omega} u \cdot \varphi \, dx, \ \ {\rm for\ all}\ \varphi \in \xn (\Omega ),
\end{equation}
in the unknowns $u\in \xn (\Omega)$ and $\lambda \in \R$. Is obvious  that the  solutions  of  \eqref{main}  are exactly the divergence free solutions of \eqref{mainpenweak}. (Moreover, the weak formulation of \eqref{main} can be obtained simply by replacing $\xn (\Omega )$ by $\xn (\di 0, \Omega )$ in \eqref{mainpenweak}.)

Problem \eqref{mainpenweak}  admits also  solutions  which are not divergence free and which are given by the gradients of the solutions  to the Helmohltz 
equation with Dirichlet boundary conditions. Namely,   $u=\nabla f $   where $f$ solves the following problem
\begin{equation}\label{hel}
\left\{
\begin{array}{ll}
-\Delta f = \Lambda  f,& \ \ {\rm in}\ \Omega ,\\
f = 0 ,& \ \ {\rm on}\ \partial \Omega ,
\end{array}
\right.
\end{equation}
with $\Lambda =\frac{\lambda}{\tau}$. In fact, we have the following result from \cite{coda}.

\begin{lemma}\label{union} If  $\Omega$ is a  bounded domain in $\R^3$ of class ${\mathcal{C}}^{0,1}$, then  the set of  all eigenpairs $(\lambda , u)$ of  problem \eqref{mainpen} is the union  of the set of all  eigenpairs $(\lambda , u)$ of problem \eqref{main} and the set of all  eigenpairs of the form $(\tau \Lambda , \nabla f   )$  where $ (\Lambda , f)$ is an   eigenpair of problem \eqref{hel}.
\end{lemma}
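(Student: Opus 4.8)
The plan is to verify the two inclusions that make up the asserted equality of eigenpair sets, working entirely with the weak formulation \eqref{mainpenweak} and exploiting the Helmholtz-type decomposition of $\xn(\Omega)$. First I would recall that, for $\Omega$ of class $C^{0,1}$, every $u \in \xn(\Omega)$ splits orthogonally (in $L^2(\Omega)^3$) as $u = w + \nabla \phi$ where $w \in \xn(\di 0, \Omega)$ and $\phi \in H^1_0(\Omega)$, with $\Delta \phi = \di u \in L^2(\Omega)$; this is the standard fact that $\nabla H^1_0(\Omega)$ is the $L^2$-orthogonal complement, inside $H_0(\cu,\Omega)$, of the divergence-free fields, and it uses precisely the Lipschitz regularity of $\Omega$ together with Lemma \ref{extbyzero}. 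Note $\cu(\nabla\phi) = 0$ and $\di w = 0$, so in the quadratic form the two components decouple.

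Next I would prove the easy direction: if $(\lambda, u)$ is an eigenpair of \eqref{main}, i.e. a divergence-free solution of \eqref{mainpenweak}, then plainly it is an eigenpair of \eqref{mainpen}; and if $(\Lambda, f)$ solves \eqref{hel} then taking $u = \nabla f$ we compute, for any $\varphi \in \xn(\Omega)$, that $\cu u = 0$ and $\di u = \Delta f = -\Lambda f$, so the left side of \eqref{mainpenweak} becomes $\tau \int_\Omega \Delta f \, \di\varphi\, dx = -\tau\int_\Omega \nabla f \cdot \nabla(\di\varphi)\,dx$ — here I must be slightly careful, integrating by parts using $f = 0$ on $\partial\Omega$ (equivalently testing \eqref{hel} against $\di\varphi \in H^1_0$... or rather rewriting via the equation $-\Delta f = \Lambda f$ directly), giving $-\tau\int_\Omega \Delta f\, \di\varphi = \tau\Lambda \int_\Omega f\, \di\varphi = -\tau\Lambda\int_\Omega \nabla f\cdot\varphi = \tau\Lambda\int_\Omega \nabla f\cdot\varphi$ after another integration by parts; hence $(\tau\Lambda, \nabla f)$ satisfies \eqref{mainpenweak}, and $\nabla f \neq 0$ since $f\neq 0$.

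For the reverse direction, suppose $(\lambda, u)$ is an eigenpair of \eqref{mainpen}, $u \neq 0$. Decompose $u = w + \nabla\phi$ as above. Testing \eqref{mainpenweak} first with $\varphi = \nabla\psi$ for arbitrary $\psi \in H^1_0(\Omega)$ kills the $\cu$ term and yields $\tau\int_\Omega \di u \, \Delta\psi\,dx = \lambda \int_\Omega u\cdot\nabla\psi\,dx$; since $\di u = \Delta\phi$ and $\int_\Omega u\cdot\nabla\psi = \int_\Omega \nabla\phi\cdot\nabla\psi$ (the divergence-free part is $L^2$-orthogonal to gradients of $H^1_0$ functions), this says $\tau\Delta(\Delta\phi) $-type identity — more cleanly, it says $\phi$ is a weak solution of $-\tau\Delta\phi = \lambda\phi$... one must check the bookkeeping, but the upshot is that either $\phi = 0$, or $\lambda/\tau$ is a Dirichlet eigenvalue of $-\Delta$ with eigenfunction $\phi$, i.e. $(\lambda/\tau, \phi)$ solves \eqref{hel}. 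Then testing \eqref{mainpenweak} with divergence-free $\varphi = \eta \in \xn(\di 0,\Omega)$ removes the $\di$ term and gives $\int_\Omega \cu w\cdot\cu\eta = \lambda\int_\Omega w\cdot\eta$ (using $\cu u = \cu w$ and orthogonality of $\nabla\phi$ to $\eta$), so either $w = 0$ or $(\lambda, w)$ is an eigenpair of \eqref{main}. Finally, because $u = w + \nabla\phi \neq 0$ at least one summand is nonzero, and a short argument shows the two cannot both be nonzero unless they correspond to the same $\lambda$ — in which case $u$ is a sum of two eigenfunctions with the same eigenvalue, and by linearity $(\lambda, w)$ and $(\lambda, \nabla\phi)$ are separately the claimed eigenpairs, so $u$ lies in the span of the two families; thus as sets of eigenpairs (up to the usual identification of eigenfunctions spanning eigenspaces) the union description holds.

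The main obstacle I anticipate is purely the careful handling of the integrations by parts and the $H^1_0$-regularity needed to justify testing \eqref{mainpenweak} with $\nabla\psi$ and to identify $\phi$ as solving the Dirichlet problem \eqref{hel} — in particular making sure that $\di u \in L^2$ together with $u \in H_0(\cu,\Omega)$ genuinely forces $\phi \in H^1_0(\Omega)$ with $\Delta\phi = \di u$, which is where the $C^{0,1}$ hypothesis and Lemma \ref{extbyzero} enter. Everything else is linear algebra on eigenspaces. Since the result is quoted from \cite{coda}, I would keep the exposition brief and refer there for the finer points of the Helmholtz decomposition on Lipschitz domains.
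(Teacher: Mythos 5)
The paper itself offers no proof of Lemma~\ref{union}: it is quoted from \cite{coda}, so there is no in-paper argument to measure yours against. Your proposal follows the standard route and its skeleton is sound: split $u\in\xn(\Omega)$ as $u=w+\nabla\phi$ with $\phi\in H^1_0(\Omega)$ the weak solution of $\Delta\phi=\di u$ and $w=u-\nabla\phi\in\xn(\di 0,\Omega)$, note that the bilinear form decouples along this $L^2$-orthogonal splitting, and test \eqref{mainpenweak} separately against gradients and against divergence-free fields. Three points need tightening. First, the sign chain in the easy direction is garbled as written (it ends with an identity of the form $-X=X$); the clean computation is $\tau\int_\Omega\di(\nabla f)\,\di\varphi\,dx=-\tau\Lambda\int_\Omega f\,\di\varphi\,dx=\tau\Lambda\int_\Omega\nabla f\cdot\varphi\,dx$, using only $f\in H^1_0(\Omega)$ to kill the boundary term, and the curl term vanishes identically. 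Second, you cannot test with $\nabla\psi$ for arbitrary $\psi\in H^1_0(\Omega)$: you need $\nabla\psi\in\xn(\Omega)$, i.e.\ $\Delta\psi\in L^2(\Omega)$. This is harmless because the Dirichlet Laplacian maps $\set{\psi\in H^1_0(\Omega):\Delta\psi\in L^2(\Omega)}$ onto $L^2(\Omega)$, which is exactly the surjectivity you need to pass from $\int_\Omega(\tau\Delta\phi+\lambda\phi)\Delta\psi\,dx=0$ to $\tau\Delta\phi+\lambda\phi=0$; you should say so explicitly. Third, the remark that ``the two cannot both be nonzero unless they correspond to the same $\lambda$'' is confused: there is only one $\lambda$ in play, both components automatically satisfy their respective eigenvalue equations with that same $\lambda$, and nothing prevents both from being nonzero (this happens precisely when $\lambda$ is simultaneously an eigenvalue of \eqref{main} and $\tau$ times a Dirichlet eigenvalue). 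Your final reading --- that the asserted set equality is to be understood at the level of eigenvalues and eigenspaces, the $\lambda$-eigenspace of \eqref{mainpen} being the direct sum of the $\lambda$-eigenspace of \eqref{main} and the gradients of the $\lambda/\tau$-eigenspace of \eqref{hel} --- is the correct one and is how the cited statement is meant. With these repairs the argument is complete.
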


Thus, we can directly study problem \eqref{mainpenweak} rather than the original problem \eqref{main}: this will always be understood in the following.
In fact, studying the spectral stability of problem  \eqref{mainpenweak} is equivalent to studying the spectral stability of problem \eqref{main} because the spurious eigenpairs introduced by the penalty term are given by the eigenpairs of the Dirichlet Laplacian which are stable for our class of domain perturbations (see \cite{arrlam}).

In order to study spectral stability problems, it is also convenient to recast the eigenvalue problems under consideration in the form of eigenvalue problems 
for compact self-adjoint operators and this can be done by passing to the analysis of the corresponding resolvent operators. A direct way of doing so, consists 
in defining  the operator $T$ from $\xn (\Omega )$ to its dual $(\xn (\Omega))'$  by setting
\begin{equation}\label{operatort}
<Tu, \varphi>= \int_{\Omega}\cu u \cdot \cu \varphi \, dx+\tau \int_{\Omega} \di u\,  \di \varphi \, dx  ,
\end{equation}
for all $u,\varphi \in \xn (\Omega )$, and considering  the map 
$J$ from $L^2(\Omega)^3$ to $(\xn (\Omega))'$  defined by 
$$
<Ju, \varphi>=\int_{\Omega} u \cdot \varphi \, dx ,
$$
for all $u\in L^2(\Omega)^3$ and $\varphi  \in \xn (\Omega )$. 
By restricting $J$  to $\xn(\Omega)$ (and denoting the restriction by the same symbol $J$), and using the Riesz Theorem it turns out that the operator $T+J$ is a homeomorphism from $\xn (\Omega )$ to its dual. The inverse operator $(T+J)^{-1}$ will serve for our purposes as discussed above. In fact, the following theorem holds. 
 
\begin{lemma}\label{esse} If $\Omega$ is a bounded domain in $\R^3$ such that  the embedding $\iota$  of $\xn (\Omega )$  into  $L^2(\Omega)^3$ is compact, then the 
 operator $S_{\Omega}$ from $L^2(\Omega)^3$ to itself defined by
$$
S_{\Omega } u=\iota \circ (T+J)^{-1}\circ J
$$
 is a non-negative compact self-adjoint operator in  $L^2(\Omega)^3$ whose eigenvalues $\mu$  are related to the eigenvalues $\lambda $ of problem \eqref{mainpenweak}   by the equality $\mu =(\lambda +1)^{-1}$.
\end{lemma}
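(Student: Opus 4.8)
The plan is to verify directly that the operator $S_\Omega = \iota \circ (T+J)^{-1} \circ J$ has the three asserted properties---compactness, self-adjointness, non-negativity---and then to identify its spectrum with that of \eqref{mainpenweak} via the stated correspondence $\mu = (\lambda+1)^{-1}$. The starting point is the observation, already recorded in the excerpt, that by the Riesz representation theorem $T+J$ is a homeomorphism from $\xn(\Omega)$ onto its dual $(\xn(\Omega))'$: indeed, the bilinear form $\langle (T+J)u,\varphi\rangle = \int_\Omega \cu u\cdot\cu\varphi\,dx + \tau\int_\Omega \di u\,\di\varphi\,dx + \int_\Omega u\cdot\varphi\,dx$ is exactly the scalar product inducing the norm $\|\cdot\|_{\xn(\Omega)}$, so $(T+J)^{-1}$ is well defined and bounded. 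Since $J\colon L^2(\Omega)^3\to(\xn(\Omega))'$ is clearly bounded and $(T+J)^{-1}$ is bounded, $S_\Omega$ is the composition of the bounded map $(T+J)^{-1}\circ J$ with the embedding $\iota$; by hypothesis $\iota$ is compact, hence $S_\Omega$ is compact.

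For self-adjointness, I would unwind the definition: for $u,v\in L^2(\Omega)^3$ set $\hat u = (T+J)^{-1}Ju$ and $\hat v = (T+J)^{-1}Jv$, both elements of $\xn(\Omega)$. Then $(S_\Omega u, v)_{L^2} = (\hat u, v)_{L^2} = \langle Jv,\hat u\rangle = \langle (T+J)\hat v,\hat u\rangle$, which by the symmetry of the bilinear form associated to $T+J$ equals $\langle (T+J)\hat u,\hat v\rangle = \langle Ju,\hat v\rangle = (u,\hat v)_{L^2} = (u, S_\Omega v)_{L^2}$. This gives $(S_\Omega u,v)_{L^2} = (u,S_\Omega v)_{L^2}$ for all $u,v$, i.e. $S_\Omega$ is symmetric, and being bounded it is self-adjoint. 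Non-negativity follows from the same computation taking $v=u$: $(S_\Omega u,u)_{L^2} = \langle (T+J)\hat u,\hat u\rangle = \|\cu\hat u\|_{L^2}^2 + \tau\|\di\hat u\|_{L^2}^2 + \|\hat u\|_{L^2}^2 \ge 0$.

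It remains to match eigenvalues. Suppose $S_\Omega u = \mu u$ with $u\ne 0$; then $\mu\ne 0$ because $S_\Omega u = 0$ forces $\langle(T+J)\hat u,\hat u\rangle = 0$, hence $\hat u = 0$ in $\xn(\Omega)$, hence $Ju = (T+J)\hat u = 0$, hence $u=0$ in $L^2$, a contradiction. Writing $\mu u = \iota\hat u$ we get $u = \mu^{-1}\hat u \in \xn(\Omega)$, and applying $T+J$ to $\hat u = \mu u$ yields $(T+J)(\mu u) = Ju$, i.e. $\langle T u,\varphi\rangle + \langle Ju,\varphi\rangle = \mu^{-1}\langle Ju,\varphi\rangle$ for all $\varphi\in\xn(\Omega)$, that is $\int_\Omega \cu u\cdot\cu\varphi + \tau\int_\Omega \di u\,\di\varphi = (\mu^{-1}-1)\int_\Omega u\cdot\varphi$. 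This is precisely \eqref{mainpenweak} with $\lambda = \mu^{-1}-1$, equivalently $\mu = (\lambda+1)^{-1}$. The converse is obtained by reversing these steps: if $u\in\xn(\Omega)$ solves \eqref{mainpenweak} with eigenvalue $\lambda$, then $(T+J)u = (\lambda+1)Ju$, so $u = (\lambda+1)(T+J)^{-1}Ju = (\lambda+1)\hat u$ and $S_\Omega u = \iota\hat u = (\lambda+1)^{-1}u$. The only point needing a little care is the identification $\hat u = \mu u$ as an \emph{equality in $\xn(\Omega)$}: a priori $S_\Omega u = \mu u$ is an identity in $L^2$, but since $\iota$ is injective and $\iota\hat u = \mu u$, the element $\mu u$ lies in the range of $\iota$ and coincides with $\hat u$ there, so $u\in\xn(\Omega)$ and the equality is legitimate. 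I do not anticipate a serious obstacle; the argument is the standard resolvent-operator formalism, and the only structural input beyond Riesz's theorem is the compactness hypothesis on $\iota$, which is assumed.
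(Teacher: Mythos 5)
Your proof is correct and is precisely the standard resolvent-operator argument that the paper relies on: the authors state Lemma~\ref{esse} without proof, having already set up $T$, $J$ and the Riesz-theorem identification of $T+J$ with the $\xn(\Omega)$ inner product in the preceding paragraph, and your verification of compactness, symmetry, non-negativity and the spectral correspondence $\mu=(\lambda+1)^{-1}$ fills in exactly the computation they leave implicit. The only detail worth making explicit is that $Ju=0$ forces $u=0$ because $\xn(\Omega)\supset (C_c^\infty(\Omega))^3$ is dense in $L^2(\Omega)^3$, but this is immediate.
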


 By the previous lemma and standard spectral theory it follows that the spectrum  $\sigma (S_{\Omega})$ of $S_{\Omega}$ can be represented as $\sigma (S_{\Omega})=\{0\} \cup \{\mu_n (\Omega ) \}_{n\in \N} $, where 
$\mu_n(\Omega )$, $n\in \N$  is a decreasing sequence of positive eigenvalues of finite multiplicity,  which converges to zero.  Consequently, 
the eigenvalues of problem  \eqref{mainpenweak}  can be represented  by the sequence $\lambda_n(\Omega ) $, $n\in \N$ defined by $
\lambda_n(\Omega )= \mu_n^{-1}(\Omega )-1
$.  Moreover, the classical Min-Max Principle yields the following variational representation
 \begin{equation}\label{minmax}
\lambda_n(\Omega )=   \min_{ \substack{ V\subset \xn(\Omega )  \\ {\rm dim }V=n }  }\  \,  \max _{u\in V\setminus\{0\}  }
  \frac{  \int_{\Omega}  |\cu u|^2 dx + \tau  \int_\Omega |\di u |^2  dx }{\int_{\Omega}   |u|^2\, dx}.
\end{equation}


\bigskip

\section{A Piola-type approximation of the identity}

Given two domains $\Omega$ and $\tilde \Omega$ in ${\mathbb{R}}^3$ and a diffeomorphism $\Phi :\tilde\Omega \to \Omega$ of class $C^{1,1}$, the standard 
way to pull-back vector fields from $ \xn(\Omega) $  to $\xn(\tilde\Omega) $ consists in using the (covariant) Piola transform defined by
\begin{equation}\label{pullback}
u(x)= \left((v \circ \Phi) \operatorname{D}\Phi \,\right)(x),\ \ {\rm for\ all }\  x\in \tilde\Omega ,
\end{equation}
for all $v\in \xn(\Omega)$, see e.g., \cite{monk}. 
In fact, it turns out that $v\in  H_0(\cu, \Omega)   $ if and only if  $u\in H_0(\cu, \tilde \Omega) $, in which case we have  
\begin{equation}
(\operatorname{curl} v ) \circ \Phi = \frac{\operatorname{curl} u \left(\operatorname{D} \Phi  \right)^{T}} {\operatorname{det} \left( \operatorname{D} \Phi  \right)} \, .
\label{changecurl0}
\end{equation}
Note that for functions $u,v$ in $H^1$ we also have
 \begin{equation}
 \label{changediv0}
 (\displaystyle \operatorname{div} v) \circ \Phi   = \frac{\operatorname{div} \left[ u (\operatorname{D}\Phi )^{-1} (\operatorname{D}\Phi  )^{-T} \operatorname{det}(\operatorname{D} \Phi  ) \right]}{\operatorname{det} (\operatorname{D} \Phi ) },
 \end{equation}
and in this case $v \in \xn(\Omega) \cap H^1(\Omega)^3$ if and only if $u \in \xn(\tilde \Omega) \cap H^1(\tilde \Omega)^3$. See \cite{lamzac} for more details. 
Unfortunately, given two domains $\Omega$ and $\tilde \Omega$, in general it is not possible to define explicitly  a diffeomorphism between $\Omega$ and $\tilde \Omega$ (even if it is known a priori that the two domains are diffeomorphic). Nevertheless, it is important  for our purposes to define an operator which allows to pass from 
 $ \xn(\Omega) $  to $\xn(\tilde\Omega) $ as the Piola transform does.  
 This can be done by assuming that $\Omega$ and $\tilde\Omega$ belong to the same atlas class and   using a partition of unity in order to paste together Piola transforms
 defined locally, as described in the following. 
Note that the specific choice of local Piola transforms reflects our need for a transformation close to the identity. 
 
Let $\mathcal{A}$  be a fixed atlas in ${\mathbb{R}}^3$ and let $\Omega, \tilde \Omega$ be two domains of class $C^{1,1}(\mathcal{A})$. Let $g_j,\tilde g_j$ be the profile functions of $\Omega$ and $\tilde \Omega$ as in Definition~\ref{atlas}. Assume that  $k \in ]0,+\infty[$ is such that 
\begin{equation}\label{basic}
k>\max_{j=1,\dots, s'}\| \tilde g_j-g_j \|_{\infty}, \ {\rm and }\      \tilde g_j-k>a_{3,j}+\rho,\ \forall  j  = 1,\dots,s'. 
\end{equation}
For any $j=1,\dots,s'$ we set 
\begin{equation}\label{hatgj}
\hat{g}_j:= \tilde g_j -k
\end{equation}  
and we  define the map $h_{j}: r_j(\overline{\tilde \Omega \cap V_j}) \to \mathbb{R}$ 
\begin{equation}\label{accaj}
h_{j}(\bar{x},x_3) := \left\{
        \begin{array}{ll}
            0, & \quad \text{if } a_{3j}\leq x_3\leq \hat{g}_{j}(\bar{x}),\\
            (\tilde g_{j}(\bar{x})-g_j(\bar{x})) \left(\frac{x_3-\hat{g}_{j} (\bar{x})}{\tilde g_{j}(\bar{x})- \hat{g}_{j} (\bar{x})}\right)^3, & \quad \text{if } \hat{g}_{j} (\bar{x})<x_3\leq \tilde g_{j}(\bar{x}),
        \end{array}
    \right.
\end{equation}
and  the map 
\begin{equation}
\label{fij}
\Phi_{j} : r_j(\overline{\tilde \Omega \cap V_j}) \to r_j(\overline{\Omega \cap V_j}), \qquad \Phi_{j}(\bar{x},x_3):=(\bar{x}, x_3 - h_{j}(\bar{x},x_3)).
\end{equation}
Note that $\Phi_{j}$ coincides with the identity map on the set 
\begin{equation}
\label{kappaj}K_{j} :=\set{(\bar{x},x_3) \in W_j \times ]a_{3j},b_{3j}[ \ : a_{3j}<x_3<\hat{g}_{j}(\bar{x})}.
\end{equation}

Finally, if   $s'+1\leq j\leq s$ we define $\Phi_{j}: r_j(\overline{V_j}) \to r_j(\overline{V_j})$ to be the identity map. 

Observe that since $h_{j} \in C^{1,1}(r_j(\overline{\tilde\Omega \cap V_j}))$, then $\Phi_{j}$ is of class $C^{1,1}$, and so is the following map
\begin{equation}\label{psij}\Psi_{j} : \overline{\tilde \Omega \cap V_j} \to \overline{\Omega \cap V_j}, \qquad \Psi_{j} : = r_j^{-1} \circ \Phi_{j} \circ r_j.
\end{equation}

An easy computation shows that if  
\begin{equation}\label{basic2}
k>\frac{3}{\alpha}\max_{j=1,\dots, s'}\| \tilde g_j-g_j \|_{\infty}
\end{equation}
for some constant $\alpha \in ]0,1[$ then 
\begin{equation}\label{basic3}
0<1-\alpha \leq \operatorname{det}(\operatorname{D}\Psi_{j}(x)) \leq 1+\alpha  \quad \text{for any } x\in \tilde\Omega \cap V_j.
\end{equation}
Let  $\{\psi_j\}_{j=1}^s$ be a $C^{\infty}$-partition of unity associated with the open cover $\{V_j\}_{j=1}^s$ of the compact set $\overline{\cup_{j=1}^s(V_{j})_{\rho}}$
 that is  $0\leq \psi_j \leq 1$, $\operatorname{supp}(\psi_j) \subset V_j$ for all $j=1,\dots,s$, and $\sum_{j=1}^s \psi_j \equiv 1$ in $\overline{\cup_{j=1}^s(V_{j})_{\rho}}$, in particular also in $\overline{\Omega\cup\tilde\Omega}$. Note that this is  a partition of unity is  independent of  $\Omega, \tilde \Omega$ in the atlas class under consideration.

 Since for any  $\varphi \in \xn(\Omega)$  we have $\varphi=\sum_{j=1}^s \varphi_j$ where $\varphi_j=\psi_j \varphi$, then  it is  natural to give the following definition (note that here we consider open sets of class $C^{1,1}$ hence the spaces $\xn$ are embedded into $H^1$). 
  
 \begin{defn}  Let ${\mathcal{A}}$ be an atlas in ${\mathbb{R}}^3$ and  $\Omega, \tilde\Omega$ be two domains of class $C^{1,1}({\mathcal{A}})$. Assume that $k>0$ satisfies \eqref{basic}, and $\{\psi_j\}_{j=1}^s$  is a partition of unity as above.  The {\em Atlas  Piola transform} from $\Omega$ to $\tilde \Omega$,  with parameters ${\mathcal{A}}$, $k$, and $\{\psi_j\}_{j=1}^s$,  is the map   from  $\xn(\Omega)$ to $\xn(\tilde\Omega)$ defined by 
\begin{equation} \label{defE}
\mathcal{P} \varphi:=\sum_{j=1}^{s'} \tilde{\varphi}_{j} + \sum_{j=s'+1}^s \varphi_j
\end{equation}
for all $\varphi \in \xn(\Omega)$, where
\begin{equation}
\tilde{\varphi}_{j}(x):= \left\{
\begin{array}{ll}\label{tildefij}
(\varphi_j \circ \Psi_{j}(x)) \operatorname{D}\Psi_{j}(x), & \text{if } x\in \tilde \Omega \cap V_j,\vspace{1mm}\\
0, & \text{if } x\in \tilde\Omega \setminus V_j,
\end{array} \right.
\end{equation}
for any $j=1,\dots,s'$.
\end{defn}
Note that $\mathcal{P}\varphi \in \xn(\tilde{\Omega})$ because $(\varphi_j \circ \Psi_{j}) \operatorname{D}\Psi_{j} \in \xn(\tilde{\Omega} \cap V_j)$ (observe that the support of $\varphi_j$ is compact in $V_j$), hence $\tilde{\varphi}_j \in \xn(\tilde{\Omega})$.
 
This Atlas  Piola  transform   will be used in this paper for  a family $\Omega_\epsilon, \epsilon>0$ of domains  of class $C^{1,1}(\mathcal{A})$, converging in some sense  to a domain $\Omega$ of class $C^{1,1}(\mathcal{A})$.  In this case, $\Omega_{\epsilon}$ will play the role of the domain $\tilde \Omega$ and 
the corresponding transformation  will allow us to pass from $\xn(\Omega)$ to $\xn(\Omega_{\epsilon})$.

Given a family of domains $\Omega_{\epsilon}$, $\epsilon >0$, and a fixed domain $\Omega$, all of class $C^{1,1}(\mathcal{A})$, we shall denote by $g_{\epsilon,j}$ and $g_j$ the corresponding profile functions  (defined on  $W_j$)  of $\Omega_{\epsilon}$ and $\Omega$ respectively, as in Definition~\ref{atlas}. 
 Following \cite{arrlam,ferlam}, we use a notion of convergence  for the open sets $\Omega_{\epsilon}$ to $\Omega$, which is expressed in terms of convergence  of the  the profile functions $g_{\epsilon,j}$ to $g_j$. Namely, we assume that for any $\epsilon>0$ there exists $\kappa_\epsilon>0$ such that for any $j\in \set{1,\dots,s'}$
\begin{equation} \label{assumptions}
\begin{split}
&(i) \hspace{9pt} \kappa_\epsilon>\max_{ j=1,\dots , s'  } \norm{g_{\epsilon,j} - g_j}_{L^\infty(W_j)};\\
&(ii) \hspace{4pt} \lim_{\epsilon \to 0} \kappa_\epsilon =0;\\
&(iii) \hspace{4pt} \lim_{\epsilon \to 0} \frac{  \max_{ j=1,\dots , s'  }  \norm{D^\beta(g_{\epsilon,j} -g_j)}_{L^\infty(W_j)}}{\kappa_\epsilon^{3/2 - |\beta|}}=0 \quad \text{ for all }\ \beta \in \mathbb{N}^3 \text{ with } |\beta| \leq 2.
\end{split}
\end{equation}

Note that if every function $g_{\epsilon,j}$ converges to $g_j$ uniformly together with the first  order derivatives and condition \eqref{unic2} is satisfied  (in particular, if the second order derivatives of  $g_{\epsilon,j}$ converge uniformly to those of  $g_j$)   then conditions \eqref{assumptions} are fulfilled, see \cite{arrlam}. 
Note also that the exponent $3/2$ in \eqref{assumptions} turns out to be optimal in the analysis of \cite{arrlam} and plays a crucial role for instance in  proving inequality \eqref{estimateAbis}.

We now fix a partition of unity $\{ \psi_j\}_{j=1}^s$ associated with the covering of cuboids of the atlas ${\mathcal{A}}$ as above, and independent of $\Omega_{\epsilon}$
and $\Omega$. We also choose $k=6 \kappa_{\epsilon}$ and we denote by ${\mathcal{P}}_{\epsilon}$  the Atlas Piola transform  from $\Omega$ to $\Omega_{\epsilon}$ (with parameters ${\mathcal{A}}$, $k$, $\{ \psi_j\}_{j=1}^s$ ). 
Note that conditions \eqref{basic}, \eqref{basic2} \eqref{basic3} are satisfied with $\alpha =1/2$ if $\epsilon$ is sufficiently small. 

In the  following, we shall denote by $\hat g_{\epsilon, j}$, $h_{\epsilon, j}$, $\Phi_{\epsilon ,j}$, $K_{\epsilon ,j}$,   $\Psi_{\epsilon ,j}$,  $\tilde \varphi_{\epsilon ,j}$ all quantities defined in \eqref{hatgj}, \eqref{accaj}, \eqref{fij}, \eqref{kappaj}, \eqref{psij}, \eqref{tildefij} respectively, with $\tilde \Omega = \Omega_{\epsilon}$ and $k=6\kappa_{\epsilon}$.

Then we can prove the following theorem. We note that in the proof, some technical issues related to pasting together functions defined in  different charts are treated  in the spirit of the arguments used in  \cite{ferlam} for the Sobolev spaces $H^2(\Omega)$.

\begin{theorem}\label{Piolamain}
Let $\Omega_\epsilon, \epsilon>0$,  and $\Omega$ be bounded domains of class $C^{1,1}(\mathcal{A})$. Assume that $\Omega_{\epsilon}$ converges to $\Omega$ as $\epsilon\to 0$ in the sense of  \eqref{assumptions}. Let ${\mathcal{P}}_{\epsilon}$  be the  Atlas Piola transform   from $\Omega$ to $\Omega_{\epsilon}$ defined for $\epsilon $ sufficiently small as above. Then the following statements hold:
\begin{enumerate}[label=(\roman*),font=\upshape]
\item for any $\epsilon>0$ the function ${\mathcal P}_\epsilon$ maps  $\xn(\Omega)$ to $\xn(\Omega_\epsilon)$  with continuity;
\item for any compact set ${\mathcal{K}}$ contained in $\Omega$ there exists $\epsilon_{\mathcal{K}}  >0 $ such that 
\begin{equation}\label{identity}
({\mathcal P}_\epsilon \varphi )(x)= \varphi (x),\ \ \forall x\in {\mathcal{K}}
\end{equation}
 for all $\epsilon \in ]0,\epsilon_{\mathcal{K}}[$ and   $\varphi \in \xn(\Omega)$;
\item   the limit  
\begin{equation}
\norm{{\mathcal P}_\epsilon \varphi}_{\xn (\Omega_\epsilon)} \xrightarrow[\epsilon \to 0]{} \norm{\varphi}_{\xn (\Omega)},
\end{equation}
holds for all $\varphi \in \xn(\Omega)$;
\item the limit 
\begin{equation}
\norm{{\mathcal P}_\epsilon \varphi    -\varphi }_{ X (\Omega_\epsilon \cap \Omega  )} \xrightarrow[\epsilon \to 0]{}  0 , 
\end{equation}
holds for all $\varphi \in \xn(\Omega)$. 
\end{enumerate}
\end{theorem}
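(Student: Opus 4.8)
\emph{Proof strategy.} Everything reduces to a few scalar estimates on the corrections $h_{\epsilon,j}$ of \eqref{accaj} together with the Piola identities \eqref{changecurl0}--\eqref{changediv0} used chart by chart. It is convenient to work in the rotated coordinates $r_j$ and to write $p_{\epsilon,j}:=g_{\epsilon,j}-g_j$, $c:=6\kappa_\epsilon$, $s:=x_3-\hat g_{\epsilon,j}(\bar x)\in[0,c]$, so that on its active strip $S_{\epsilon,j}:=\{\hat g_{\epsilon,j}(\bar x)<x_3<g_{\epsilon,j}(\bar x)\}\cap V_j$ one has $h_{\epsilon,j}=c^{-3}p_{\epsilon,j}(\bar x)\,s^3$ while $\Psi_{\epsilon,j}$ is the identity off $S_{\epsilon,j}$. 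From \eqref{assumptions} we get $\|p_{\epsilon,j}\|_\infty=o(\kappa_\epsilon^{3/2})$, $\|\nabla p_{\epsilon,j}\|_\infty=o(\kappa_\epsilon^{1/2})$, $\|D^2 p_{\epsilon,j}\|_\infty=o(\kappa_\epsilon^{-1/2})$, $\|\nabla g_{\epsilon,j}\|_\infty$ bounded, $\|D^2 g_{\epsilon,j}\|_\infty=o(\kappa_\epsilon^{-1/2})$; differentiating the explicit formula gives $\|h_{\epsilon,j}\|_\infty+\|\nabla h_{\epsilon,j}\|_\infty\to0$, hence $\operatorname{D}\Psi_{\epsilon,j}\to I$, $\det\operatorname{D}\Psi_{\epsilon,j}\to1$, $(\operatorname{D}\Psi_{\epsilon,j})^{-1}\to I$ uniformly (and $\tfrac12\le\det\operatorname{D}\Psi_{\epsilon,j}\le\tfrac32$ by \eqref{basic3} with $\alpha=\tfrac12$), while each of $\partial_{x_1}^2 h_{\epsilon,j},\partial_{x_2}^2 h_{\epsilon,j},\partial_{x_3}^2 h_{\epsilon,j}$ is a finite sum of terms $c^{-3}a_\epsilon(\bar x)\,s^m$ with $m\in\{1,2,3\}$ and $\|a_\epsilon\|_\infty=o(\kappa_\epsilon^{5/2-m})$ (the cube in \eqref{accaj} and the exponent $3/2$ in \eqref{assumptions} are matched: $m=1$ pairs with $\|p_{\epsilon,j}\|_\infty$, $m=2$ with $\|\nabla p_{\epsilon,j}\|_\infty$, $m=3$ with $\|D^2 p_{\epsilon,j}\|_\infty$). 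Setting $\varphi_j:=\psi_j\varphi$, which lies in $H^1(\Omega)^3$ with $\|\varphi_j\|_{H^1}\le C\|\varphi\|_{\xn(\Omega)}$ by Theorem~\ref{gaffneythm}, I would record for $\tilde\varphi_{\epsilon,j}$ of \eqref{tildefij}, on $S_{\epsilon,j}$: by \eqref{changecurl0}, $\operatorname{curl}\tilde\varphi_{\epsilon,j}=\big((\operatorname{curl}\varphi_j)\circ\Psi_{\epsilon,j}\big)M_{\epsilon,j}$ with $M_{\epsilon,j}:=\det(\operatorname{D}\Psi_{\epsilon,j})(\operatorname{D}\Psi_{\epsilon,j})^{-T}\to I$ uniformly (\emph{no} second derivatives of $\Psi_{\epsilon,j}$ enter!); and, by a direct computation legitimate since $\varphi_j\in H^1$ and $\Psi_{\epsilon,j}\in C^{1,1}$, $\operatorname{div}\tilde\varphi_{\epsilon,j}=(\operatorname{div}\varphi_j)\circ\Psi_{\epsilon,j}+E_{\epsilon,j}-\big((\varphi_j)_3\circ\Psi_{\epsilon,j}\big)\,\Delta h_{\epsilon,j}$, where $E_{\epsilon,j}$ collects terms each carrying a factor $\nabla h_{\epsilon,j}$, so $\|E_{\epsilon,j}\|_{L^2}\le C\|\nabla h_{\epsilon,j}\|_\infty\|\varphi_j\|_{H^1}\to0$. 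Part~(i) is then clear (for fixed $\epsilon$, $C^{1,1}$-regularity bounds $\operatorname{D}^2\Psi_{\epsilon,j}$, hence $\operatorname{div}\tilde\varphi_{\epsilon,j}\in L^2$; together with the $L^2$- and curl-estimates below one even gets a bound uniform in $\epsilon$; that ${\mathcal P}_\epsilon\varphi\in\xn(\Omega_\epsilon)$ is already noted after \eqref{defE}, using $\operatorname{supp}\psi_j\Subset V_j$ and a zero-extension as in Lemma~\ref{extbyzero}). Part~(ii) is immediate: $\tilde\varphi_{\epsilon,j}-\varphi_j$ is supported in $S_{\epsilon,j}$, which for $\epsilon$ small is disjoint from any given compact ${\mathcal K}\subset\Omega$ (and ${\mathcal K}\subset\Omega_\epsilon$), whence \eqref{identity}.

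For part~(iv), since $\sum_j\psi_j\equiv1$ we have ${\mathcal P}_\epsilon\varphi-\varphi=\sum_{j\le s'}(\tilde\varphi_{\epsilon,j}-\varphi_j)$ on $\Omega_\epsilon\cap\Omega$, the $j$-th difference being supported in $S_{\epsilon,j}$, of measure $O(\kappa_\epsilon)$, with $|\Psi_{\epsilon,j}(S_{\epsilon,j})|=O(\kappa_\epsilon)$. For the $L^2$- and $\operatorname{curl}$-parts the plan is to write $\tilde\varphi_{\epsilon,j}-\varphi_j=(\varphi_j\circ\Psi_{\epsilon,j})\operatorname{D}\Psi_{\epsilon,j}-\varphi_j$ (resp. use the curl identity) and bound each summand via the change of variables $y=\Psi_{\epsilon,j}(x)$ with Jacobian bounded above and below, $\|\operatorname{D}\Psi_{\epsilon,j}-I\|_\infty\to0$, $\|M_{\epsilon,j}-I\|_\infty\to0$, and the absolute continuity of the integrals of $|\varphi_j|^2$, $|\operatorname{curl}\varphi_j|^2$, $|\operatorname{div}\varphi_j|^2$ over sets of vanishing measure; all of these tend to $0$.

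The heart of the proof is the $\operatorname{div}$-part, where the only delicate term is $\big((\varphi_j)_3\circ\Psi_{\epsilon,j}\big)\Delta h_{\epsilon,j}$, because $\Delta h_{\epsilon,j}$ does \emph{not} stay bounded as $\epsilon\to0$ (only $O(\kappa_\epsilon^{-1/2})$). Here the cubic profile and the weighted convergence enter, sharply. For a term $c^{-3}a_\epsilon(\bar x)\,s^m$ of $\Delta h_{\epsilon,j}$, fix $\bar x$ and note that the fibre $s\mapsto\Psi_{\epsilon,j}(\bar x,\hat g_{\epsilon,j}(\bar x)+s)$, $s\in(0,c)$, runs from depth $\asymp\kappa_\epsilon$ up to the point $(\bar x,g_j(\bar x))\in\partial\Omega$ (indeed at $s=c$ one has $h_{\epsilon,j}=p_{\epsilon,j}$, so the top $x_3=g_{\epsilon,j}$ is mapped exactly onto $\partial\Omega$); with $G(s):=(\varphi_j)_3(\Psi_{\epsilon,j}(\bar x,\hat g_{\epsilon,j}(\bar x)+s))$ the fundamental theorem of calculus along the fibre gives $|G(s)|^2\le 2|G(c)|^2+2c\int_0^c|G'|^2$, and hence
\[
\int_0^{c}|G(s)|^2\,s^{2m}\,ds\ \le\ C\,\kappa_\epsilon^{\,2m+1}\Big(|G(c)|^2+\kappa_\epsilon\!\int_0^{c}|G'(s)|^2\,ds\Big).
\]
Multiplying by $c^{-6}a_\epsilon(\bar x)^2$, integrating in $\bar x$, bounding $\int|G(c)|^2\,d\bar x$ by the square of the $L^2(\partial\Omega)$-trace of $(\varphi_j)_3$ (at most $C\|\varphi_j\|_{H^1(\Omega)}^2$) and $\int\!\!\int|G'|^2$ by $C\|\nabla\varphi_j\|_{L^2(\Omega)}^2$ (a bounded change of variables, the fibre velocity having norm $\le1+\|\partial_{x_3}h_{\epsilon,j}\|_\infty\le\tfrac32$), one finds that this term contributes at most $C\,\kappa_\epsilon^{\,2m-5}\|a_\epsilon\|_\infty^2\,\|\varphi\|_{\xn(\Omega)}^2=C\,\kappa_\epsilon^{\,2m-5}\,o(\kappa_\epsilon^{\,5-2m})\,\|\varphi\|_{\xn(\Omega)}^2\to0$. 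Combined with $\|(\operatorname{div}\varphi_j)\circ\Psi_{\epsilon,j}-\operatorname{div}\varphi_j\|_{L^2(S_{\epsilon,j})}\to0$ and $\|E_{\epsilon,j}\|_{L^2}\to0$, this gives $\|\operatorname{div}({\mathcal P}_\epsilon\varphi-\varphi)\|_{L^2(\Omega_\epsilon\cap\Omega)}\to0$, completing (iv).

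Finally (iii) follows by splitting $\Omega_\epsilon=(\Omega_\epsilon\cap\Omega)\cup(\Omega_\epsilon\setminus\Omega)$ and using ${\mathcal P}_\epsilon\varphi\in\xn(\Omega_\epsilon)$, so $\|{\mathcal P}_\epsilon\varphi\|_{\xn(\Omega_\epsilon)}^2=\|{\mathcal P}_\epsilon\varphi\|_{X(\Omega_\epsilon\cap\Omega)}^2+\|{\mathcal P}_\epsilon\varphi\|_{X(\Omega_\epsilon\setminus\Omega)}^2$: on the first set, (iv) and $|\Omega\triangle\Omega_\epsilon|\to0$ give $\|{\mathcal P}_\epsilon\varphi\|_{X(\Omega_\epsilon\cap\Omega)}\to\|\varphi\|_{\xn(\Omega)}$; on the second set, which lies in $\bigcup_{j\le s'}S_{\epsilon,j}$ and where ${\mathcal P}_\epsilon\varphi=\sum_{j\le s'}\tilde\varphi_{\epsilon,j}$, the same curl/div formulas, $|\Omega_\epsilon\setminus\Omega|\to0$, $|\Psi_{\epsilon,j}(\Omega_\epsilon\setminus\Omega)|\to0$, and the same weighted estimate for the $\Delta h_{\epsilon,j}$-term applied over all of $S_{\epsilon,j}$ give $\|{\mathcal P}_\epsilon\varphi\|_{X(\Omega_\epsilon\setminus\Omega)}\to0$. \textbf{The main obstacle} is exactly the term $\big((\varphi_j)_3\circ\Psi_{\epsilon,j}\big)\Delta h_{\epsilon,j}$: it is pointwise unbounded in $\epsilon$, and controlling its $L^2$-norm on the thin boundary strip requires, simultaneously, the surviving powers $s^m$ ($m\ge1$) produced by the cubic profile in \eqref{accaj}, the $H^1$-trace inequality on $\Omega$, and the sharp $3/2$-weighted convergence \eqref{assumptions}(iii); weakening any of these breaks the estimate.
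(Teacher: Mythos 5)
Your proposal is correct and follows essentially the same route as the paper: a chart-by-chart application of the Piola identities, with the only delicate point being the second-derivative (type B) term $\varphi_j^3(\Psi_{\epsilon,j})\,\Delta h_{\epsilon,j}$ in the divergence, which both arguments control by combining the $o(\kappa_\epsilon^{-1/2})$ bound coming from \eqref{assumptions}(iii) with an $H^1$-based estimate of $\varphi_j$ on the $O(\kappa_\epsilon)$-thin boundary strip. The only difference is in the bookkeeping of that one estimate: the paper bounds $\|\partial^2\Psi_{\epsilon,j}\|_{\infty}$ uniformly by $o(\kappa_\epsilon^{-1/2})$ and compensates with the one-dimensional embedding $\|f\|_{L^\infty(a,b)}\le c\|f\|_{H^1(a,b)}$ along vertical fibres (see \eqref{estimateAbis}), whereas you retain the $s^m$ weights from the cubic profile and use the fundamental theorem of calculus along the fibre together with the boundary trace inequality; the two computations are equivalent and both hinge on the sharp exponent $3/2$.
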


\begin{proof} Let $\varphi\in \xn(\Omega)$ be fixed. Note that $\Omega$ is of class $C^{1,1}$ hence the Gaffney inequality holds and $\varphi\in H^1(\Omega)^3$. 
Moreover, $\varphi_j\in \xn(\Omega)$ for all $j=1,\dots , s'$ hence $\tilde \varphi_{\epsilon , j }$ belongs to $\xn(\Omega_{\epsilon } )$ for all $j=1,\dots , s'$. It follows that ${\mathcal P}_\epsilon \varphi \in
\xn(\Omega_\epsilon)$.  
The continuity of the operator  follows by standard calculus, the Gaffney inequality and formulas \eqref{changecurl0},  \eqref{changediv0}. Thus, statement (i) holds. 

 For any fixed compact set ${\mathcal{K}}$ contained in $\Omega$, since $\hat g_{\epsilon , j}$ converges uniformly to $g_j$, we have  
$${\mathcal{K}}\cap V_j\subset  r_j^{-1} (K_{\epsilon , j} ) $$ for all $j=1, \dots , s'$ and $\epsilon$ sufficiently small; this, combined with the fact that  $\Phi_{\epsilon ,j}$ coincides with the identity on $K_{\epsilon ,j }$, it follows that
 $\tilde \varphi_{\epsilon, j}=\varphi_j$ on ${\mathcal{K}}$  for all $\epsilon$ sufficiently small and \eqref{identity} follows. 

We now prove statement (iii).  We have to prove the following limiting relations:
\begin{eqnarray} 
& & \lim_{\epsilon \to 0} \int_{\Omega_\epsilon} \abs{ {\mathcal P}_\epsilon \varphi}^2 = \int_\Omega \abs{\varphi}^2, \label{limitEvarphi}\\
& &  \lim_{\epsilon \to 0} \int_{\Omega_\epsilon} \abs{\operatorname{curl} {\mathcal P}_\epsilon \varphi}^2 = \int_\Omega \abs{\operatorname{curl} \varphi}^2, \label{limitcurlEvarphi} \\
& & \lim_{\epsilon \to 0} \int_{\Omega_\epsilon} \abs{\operatorname{div}{\mathcal P}_\epsilon \varphi}^2 = \int_\Omega \abs{\operatorname{div}\varphi}^2.  \label{limitdivEvarphi}
\end{eqnarray}


We begin by proving  \eqref{limitEvarphi}. To see this, it just suffices to show that 
\begin{equation}\label{integrale1}
\lim_{\epsilon \to 0} \int_{\Omega_\epsilon} \tilde{\varphi}_{\epsilon,j} \cdot \tilde{\varphi}_{\epsilon,h} = \int_\Omega \varphi_j \cdot \varphi_h,
\end{equation}
and
\begin{equation} \label{integrale2}
\lim_{\epsilon \to 0} \int_{\Omega_\epsilon} \tilde{\varphi}_{\epsilon,j} \cdot \varphi_i = \int_\Omega \varphi_j \cdot \varphi_i
\end{equation}
for any $j,h \in \set{1,\dots,s'}$ and $i\in \set{s'+1,\dots,s}$.
We will only show \eqref{integrale1}, since the computations to prove \eqref{integrale2} are similar. 
We will first see that 
\begin{equation} \label{tildephigoesto0}
\lim_{\epsilon \to 0} \int_{(\Omega_\epsilon \cap V_j) \setminus  r^{-1}_j(K_{\epsilon,j})} \abs{\tilde{\varphi}_{\epsilon,j}}^2 =0.
\end{equation}
Notice that for any $j \in \set{1,\dots, s'}$ we have $\abs{(\Omega \cap V_j) \setminus r_j^{-1}(K_{\epsilon,j})}\to 0$ as $\epsilon$ goes to 0.
Moreover, if $w\in \mathbb{R}^3$ is a vector, then $\abs{w \operatorname{D}\Psi_{\epsilon,j}}= \abs{w \operatorname{D}\Phi_{\epsilon,j}} \leq C \abs{w}$, since
$$
\operatorname{D}\Phi_{\epsilon,j}=\begin{pmatrix} 
1 & 0 & 0 \\
0 & 1 & 0\\
-\frac{\partial h_{\epsilon,j}}{\partial x_1} & -\frac{\partial h_{\epsilon,j}}{\partial x_2} & 1-\frac{\partial h_{\epsilon,j}}{\partial x_3} \\
\end{pmatrix}
$$
and the first derivatives of $h_{\epsilon,j}$ are all bounded due to the hypothesis on the functions $g_{\epsilon,j}$ (see also \eqref{estimatederh}). Note  that here and in what follows, by $c$ we denote a constant independent of $\epsilon $ which may vary from line to line. 
Then by using also \eqref{basic3}, we have
\begin{align*}
\int_{(\Omega_\epsilon \cap V_j) \setminus  r^{-1}_j(K_{\epsilon,j})} \abs{\tilde{\varphi}_{\epsilon,j}}^2 dy &= \int_{(\Omega_\epsilon \cap V_j) \setminus  r^{-1}_j(K_{\epsilon,j})} \abs{(\varphi_j \circ \Psi_{\epsilon,j}) \operatorname{D}\Psi_{\epsilon,j}}^2 dy \\
&\leq c \int_{(\Omega_\epsilon \cap V_j) \setminus  r^{-1}_j(K_{\epsilon,j})} \abs{\varphi_j \circ \Psi_{\epsilon,j}}^2 dy\\
&= c \int_{(\Omega \cap V_j) \setminus  r^{-1}_j(K_{\epsilon,j})} \frac{\abs{\varphi_j}^2}{\abs{\operatorname{det}(\operatorname{D}\Psi_{\epsilon,j}) \circ \Psi^{(-1)}_{\epsilon,j}}} \, dx\\
&\leq c  \int_{(\Omega \cap V_j) \setminus  r^{-1}_j(K_{\epsilon,j})} \abs{\varphi_j}^2 dx \xrightarrow[\epsilon \to 0]{}0.
\end{align*}
By \eqref{tildephigoesto0} we deduce that
\begin{equation} \label{tildephigoestophi}
\lim_{\epsilon \to 0} \int_{\Omega_\epsilon} \abs{\tilde{\varphi}_{\epsilon,j}}^2 = \int_\Omega \abs{\varphi_j}^2.
\end{equation}
Indeed, since $\Psi_{\epsilon,j}$ is the identity on $r_j^{-1}(K_{\epsilon,j}) \subset \Omega \cap \Omega_{\epsilon}$,
using \eqref{tildephigoesto0} yields 
$$\int_{\Omega_\epsilon} \abs{\tilde{\varphi}_{\epsilon,j}}^2 = \int_{r_j^{-1}(K_{\epsilon,j})} \abs{\tilde{\varphi}_{\epsilon,j}}^2 + \int_{(\Omega_\epsilon \cap V_j)\setminus r_j^{-1}(K_{\epsilon,j})} \abs{\tilde{\varphi}_{\epsilon,j}}^2 \xrightarrow[\epsilon \to 0]{}  \int_{\Omega \cap V_j} \abs{\varphi_j}^2 =   \int_\Omega \abs{\varphi_j}^2.$$
Observe now that
\begin{equation} \label{dottildephisplitintegral}
\begin{split}
&\int_{\Omega_\epsilon} \tilde{\varphi}_{\epsilon,j} \cdot \tilde{\varphi}_{\epsilon,h}= \int_{\Omega_\epsilon \cap V_j \cap V_h} \tilde{\varphi}_{\epsilon,j} \cdot \tilde{\varphi}_{\epsilon,h}\\
& \quad =\int_{r_j^{-1}(K_{\epsilon,j}) \cap r_h^{-1}(K_{\epsilon,h})} \tilde{\varphi}_{\epsilon,j} \cdot \tilde{\varphi}_{\epsilon,h} + \int_{(\Omega_\epsilon \cap V_j \cap V_h) \setminus (r_j^{-1}(K_{\epsilon,j}) \cap r_h^{-1}(K_{\epsilon,h}))} \tilde{\varphi}_{\epsilon,j} \cdot \tilde{\varphi}_{\epsilon,h}.
\end{split}
\end{equation}
It is obvious that 
\begin{equation} \label{dottildephigoestodotphi}
\lim_{\epsilon \to 0} \int_{r_j^{-1}(K_{\epsilon,j}) \cap r_h^{-1}(K_{\epsilon,h})} \tilde{\varphi}_{\epsilon,j} \cdot \tilde{\varphi}_{\epsilon,h}  = \lim_{\epsilon \to 0} \int_{r_j^{-1}(K_{\epsilon,j}) \cap r_h^{-1}(K_{\epsilon,h})} \varphi_j \cdot \varphi_h =\int_\Omega \varphi_j \cdot \varphi_h.
\end{equation}
Here and in the following we will make use of the identity
\begin{equation} \label{setidentity}
\begin{split}
&(\Omega_\epsilon \cap V_j \cap V_h) \setminus (r_j^{-1}(K_{\epsilon,j}) \cap r_h^{-1}(K_{\epsilon,h}))= \\ 
& \qquad [(\Omega_\epsilon \cap V_j \cap V_h) \setminus r_j^{-1}(K_{\epsilon,j})] \cup [(r^{-1}_j(K_{\epsilon,j}) \setminus r^{-1}_h (K_{\epsilon,h})) \cap V_h ].
\end{split}
\end{equation}
Observe that by \eqref{tildephigoesto0}, \eqref{tildephigoestophi} we get
\begin{equation} \label{dottildephigoesto'}
\begin{split}
&\abs{\int_{(\Omega_\epsilon \cap V_j \cap V_h) \setminus r_j^{-1}(K_{\epsilon,j})}  \tilde{\varphi}_{\epsilon,j} \cdot \tilde{\varphi}_{\epsilon,h}}  \\
& \qquad \leq \left( \int_{(\Omega_\epsilon \cap V_j) \setminus r_j^{-1}(K_{\epsilon,j})} \abs{\tilde{\varphi}_{\epsilon,j}}^2 \right)^\frac{1}{2} \left( \int_{\Omega_\epsilon \cap V_h} \abs{\tilde{\varphi}_{\epsilon,h}}^2 \right)^\frac{1}{2} \xrightarrow[\epsilon \to 0]{}0,
\end{split}
\end{equation}
and 
\begin{equation} \label{dottildephigoesto0''}
\begin{split}
&\abs{\int_{(r^{-1}_j(K_{\epsilon,j}) \setminus r^{-1}_h (K_{\epsilon,h})) \cap V_h } \tilde{\varphi}_{\epsilon,j} \cdot \tilde{\varphi}_{\epsilon,h}} \\
&\quad \leq \left( \int_{r_j^{-1}(K_{\epsilon,j})} \abs{\tilde{\varphi}_{\epsilon,j}}^2 \right)^\frac{1}{2} \left( \int_{(\Omega_\epsilon \cap V_h) \setminus r^{-1}_h(K_{\epsilon,h})} \abs{\tilde{\varphi}_{\epsilon,h}}^2 \right)^\frac{1}{2} \xrightarrow[\epsilon \to 0]{}0.
\end{split}
\end{equation}
Hence, by formula \eqref{setidentity}, we see that the second term of the sum in \eqref{dottildephisplitintegral} vanishes  as $\epsilon$ goes to zero, hence  we deduce the validity of \eqref{integrale1}  from \eqref{dottildephisplitintegral} and \eqref{dottildephigoestodotphi}.

We now prove \eqref{limitcurlEvarphi}.  Again, we need to  check that 
\begin{equation} \label{curlintegrale1}
\lim_{\epsilon \to 0} \int_{\Omega_\epsilon} \operatorname{curl} \tilde{\varphi}_{\epsilon,j} \cdot \operatorname{curl}\tilde{\varphi}_{\epsilon,h} = \int_\Omega \operatorname{curl}\varphi_j \cdot \operatorname{curl} \varphi_h
\end{equation}
for any $j,h \in \set{1,\dots,s'}$.  Note that 
$$
\operatorname{D}\Phi^{(-1)}_{\epsilon,j}=\frac{1}{\operatorname{det}(\operatorname{D}\Phi_{\epsilon,j})}\begin{pmatrix} 
1-\frac{\partial h_{\epsilon,j}}{\partial x_3} & 0 & 0 \\
0 & 1-\frac{\partial h_{\epsilon,j}}{\partial x_3} & 0 \\
\frac{\partial h_{\epsilon,j}}{\partial x_1} & \frac{\partial h_{\epsilon,j}}{\partial x_2} & 1 \\
\end{pmatrix} \circ \Phi^{(-1)}_{\epsilon,j},
$$
and recall that $\Psi_{\epsilon,j} = r_j^{-1} \circ \Phi_{\epsilon,j} \circ r_j$. Moreover,  by \eqref{changecurl0}  we have
$$\operatorname{curl} \tilde{\varphi}_{\epsilon,j} = \left( \operatorname{curl}\varphi_j \circ \Psi_{\epsilon,j} \right) (\operatorname{D}\Psi_{\epsilon,j})^{-T} \operatorname{det}\operatorname{D}(\Psi_{\epsilon,j}) \ \    {\rm on}\  \Omega_\epsilon \cap V_j$$
so that 
$$\abs{\operatorname{curl}\tilde{\varphi}_{\epsilon,j}} \leq c \abs{\operatorname{curl}\varphi_j \circ \Psi_{\epsilon,j}}.$$
Then, with computations analogous to those performed above, we get
\begin{equation} \label{curltildephigoesto0}
\lim_{\epsilon \to 0} \int_{(\Omega_\epsilon \cap V_j) \setminus  r^{-1}_j(K_{\epsilon,j})} \abs{\operatorname{curl}\tilde{\varphi}_{\epsilon,j}}^2 =0.
\end{equation}
It is also obvious that 
\begin{equation} \label{curltildephionKepsilon}
\lim_{\epsilon \to 0} \int_{r_j^{-1}(K_{\epsilon,j})} \abs{\operatorname{curl}\tilde{\varphi}_{\epsilon,j}}^2 = \int_\Omega \abs{\operatorname{curl}\varphi_j}^2
\end{equation}
and thus 
\begin{equation} \label{curltildephigoestolphi}
\lim_{\epsilon \to 0} \int_{\Omega_\epsilon} \abs{\operatorname{curl}\tilde{\varphi}_{\epsilon,j}}^2 = \int_\Omega \abs{\operatorname{curl}\varphi_j}^2.
\end{equation}
By using the same argument above,  formula \eqref{setidentity}  together with the new identities \eqref{curltildephigoesto0}, \eqref{curltildephionKepsilon} and \eqref{curltildephigoestolphi}, we obtain \eqref{curlintegrale1} .

Finally, we prove \eqref{limitdivEvarphi}. To do so, we need to prove that  
\begin{equation} \label{divintegrale1}
\lim_{\epsilon \to 0} \int_{\Omega_\epsilon} \operatorname{div} \tilde{\varphi}_{\epsilon,j} \operatorname{div}\tilde{\varphi}_{\epsilon,h} = \int_\Omega \operatorname{div}\varphi_j \operatorname{div}\varphi_h
\end{equation}
for any $j,h \in \set{1,\dots,s'}$. Here and in the rest of the proof, the vectors under consideration will be represented as follows:   $\varphi_j=(\varphi_j^1, \varphi_j^2, \varphi_j^3)$ and $\Psi_{\epsilon,j}=(\Psi_{\epsilon,j}^1, \Psi_{\epsilon,j}^2, \Psi_{\epsilon,j}^3)$.

Since $\varphi \in \xn(\Omega)$ and the Gaffney inequality holds on $\Omega$, it follows that  $\varphi \in H^1(\Omega)^3$. Thus,  recalling that $\tilde{\varphi}_{\epsilon,j}(x) = (\varphi_j \circ \Psi_{\epsilon,j}(x)) \operatorname{D}\Psi_{\epsilon,j}(x)$ for all $x \in \Omega_\epsilon \cap V_j$, it is possible to apply the chain rule and obtain that  
\begin{equation} \label{type:A:and:B}
\operatorname{div}(\tilde{\varphi}_{\epsilon,j})= \sum_{m,n,i=1}^3 \underbrace{\left( \frac{\partial \varphi_j^m}{\partial x_n}(\Psi_{\epsilon,j}) \frac{\partial \Psi_{\epsilon,j}^n}{\partial x_i} \frac{\partial \Psi_{\epsilon,j}^m}{\partial x_i}\right)}_{\text{type A}} + \sum_{m,i=1}^3 \underbrace{\varphi^m_j (\Psi_{\epsilon,j}) \frac{\partial^2 \Psi^m_{\epsilon,j}}{\partial x_i^2}}_{\text{type B}} \quad \text{in } \Omega_\epsilon \cap V_j,
\end{equation}
where the terms in the first sum are called of type A and the others are called terms of type B.

Recall that $h_{\epsilon,j}$ are the functions in \eqref{accaj} used to define the diffeomorphisms    $ \Phi_{\epsilon,j}$.     We observe that by the Leibniz rule we have 
$$D^\alpha h_{\epsilon,j}(x) = \sum_{0 \leq \gamma \leq \alpha} \binom{\alpha}{\gamma} D^\gamma \left( g_{\epsilon,j}(\bar{x}) - g_j(\bar{x}) \right) D^{\alpha - \gamma} \left( \frac{x_3 - \hat{g}_{\epsilon,j} (\bar{x})}{g_{\epsilon,j} (\bar{x})- \hat{g}_{\epsilon,j}(\bar{x})} \right)^3$$
hence by standard calculus (note that the denominator in the previous formula is the constant $k=6\kappa_{\epsilon}$) and \eqref{assumptions} we get 
\begin{equation}
\abs{D^{\alpha - \gamma} \left( \frac{x_3 - \hat{g}_{\epsilon,j} (\bar{x})}{g_{\epsilon,j} (\bar{x})- \hat{g}_{\epsilon,j}(\bar{x})} \right)^3} \leq \frac{c}{\abs{g_{\epsilon,j} (\bar{x}) - \hat{g}_{\epsilon,j} (\bar{x})}^{|\alpha| - |\gamma|}} \leq \frac{c}{\kappa_\epsilon^{|\alpha|-|\gamma|}}.
\end{equation}
Therefore 
\begin{equation}\label{estimatederh}
\norm{D^\alpha h_{\epsilon,j}}_\infty \leq c \sum_{0 \leq \gamma \leq \alpha} \frac{\norm{D^\gamma (g_{\epsilon,j} - g_j)}_\infty}{\kappa_\epsilon^{|\alpha|- |\gamma|}}
\end{equation}
for all $\epsilon>0$ sufficiently small. It follows by the definitions of $\Psi_{\epsilon,j}, \Phi_{\epsilon,j}$, by  \eqref{estimatederh} and part $(iii)$ of condition \eqref{assumptions},  that for all $m,i =1,2,3$
\begin{equation} \label{secondderpsi}
\norm{\frac{\partial^2 \Psi^m_{\epsilon,j}}{\partial x_i^2}}_{L^\infty(\Omega_\epsilon \cap V_j)} = o(\kappa_\epsilon^{-1/2}),\ \   {\rm as}\  \epsilon \to 0. 
\end{equation}

We claim that 
\begin{equation} \label{divtildephigoesto0}
\lim_{\epsilon \to 0} \int_{(\Omega_\epsilon \cap V_j) \setminus r^{-1}_j(K_{\epsilon,j})} \abs{\operatorname{div}\tilde{\varphi}_{\epsilon,j}}^2 =0.
\end{equation}
To prove that, we analyse first the terms of type A in \eqref{type:A:and:B}. By changing variables in integrals we get:
\begin{equation}  \label{estimateA}
\begin{split}
&\int_{(\Omega_\epsilon \cap V_j) \setminus r^{-1}_j(K_{\epsilon,j})} \abs{\frac{\partial \varphi_j^m}{\partial x_n} \circ \Psi_{\epsilon,j}}^2 \abs{\frac{\partial \Psi_{\epsilon,j}^n}{\partial x_i}}^2 \abs{\frac{\partial \Psi_{\epsilon,j}^m}{\partial x_i}}^2 dy\\
&\leq c \int_{(\Omega_\epsilon \cap V_j) \setminus r^{-1}_j(K_{\epsilon,j})} \abs{\frac{\partial \varphi_j^m}{\partial x_n} \circ \Psi_{\epsilon,j}}^2 dy\\
&=c \int_{(\Omega \cap V_j) \setminus r^{-1}_j(K_{\epsilon,j})} \abs{\frac{\partial \varphi_j^m}{\partial x_n}}^2 \frac{1}{\abs{\operatorname{det}(\operatorname{D}\Psi_{\epsilon,j}) \circ \Psi^{(-1)}_{\epsilon,j}}} \, dx\\
&\leq c \int_{(\Omega \cap V_j) \setminus r^{-1}_j(K_{\epsilon,j})} \abs{\frac{\partial \varphi_j^m}{\partial x_n}}^2 dx \xrightarrow[\epsilon \to 0]{} 0.
\end{split}
\end{equation}
We now consider  the terms of type B. By setting  $\eta_j(z) := \varphi_j(r^{-1}_j(z))$ and recalling \eqref{secondderpsi} we have that
\begin{equation}\label{estimateAbis}
\begin{split}
&\int_{(\Omega_\epsilon \cap V_j) \setminus r^{-1}_j(K_{\epsilon,j})} \abs{\varphi_j^m (\Psi_{\epsilon,j}) \frac{\partial^2 \Psi^m_{\epsilon,j}}{\partial x_i^2}}^2 dy \\
& \qquad \leq \norm{\frac{\partial^2 \Psi^m_{\epsilon,j}}{\partial x_i^2}}^2_{L^\infty(\Omega_\epsilon \cap V_j)} \int_{(\Omega_\epsilon \cap V_j) \setminus r^{-1}_j(K_{\epsilon,j})} \abs{\varphi_j (\Psi_{\epsilon,j})}^2 dy\\
& \qquad = o(\kappa_\epsilon^{-1}) \int_{(\Omega \cap V_j) \setminus r^{-1}_j(K_{\epsilon,j})} \abs{\varphi_j}^2 \frac{1}{\abs{\operatorname{det}(\operatorname{D}\Psi_{\epsilon,j}) \circ \Psi^{(-1)}_{\epsilon,j}}} \, dx\\
& \qquad \leq o(\kappa_\epsilon^{-1}) \int_{(\Omega \cap V_j) \setminus r^{-1}_j(K_{\epsilon,j})} \abs{\varphi_j(x)}^2 dx \\
&\qquad = o(\kappa_\epsilon^{-1}) \int_{r_j(\Omega \cap V_j) \setminus K_{\epsilon,j}} \abs{\eta_j (z)}^2 dz \\
& \qquad = o(\kappa_\epsilon^{-1}) \int_{W_j} \left( \int_{{\hat g}_{\epsilon,j}(\bar{z})}^{g_j(\bar{z})} \abs{\eta_j(\bar{z},z_3)}^2 dz_3 \right) d\bar{z} \\
&\qquad \leq o(\kappa_\epsilon^{-1}) \int_{W_j} \abs{g_j(\bar{z}) - {\hat g}_{\epsilon,j}(\bar{z})} \norm{\eta_j(\bar{z}, \cdot)}^2_{L^\infty(a_{3j}, g_j(\bar{z}))^3} d\bar{z} \\
&\qquad \leq o(\kappa_\epsilon^{-1}) \norm{g_j - {\hat g}_{\epsilon,j}}_{L^\infty(W_j)} \int_{W_j} \norm{\eta_j(\bar{z}, \cdot)}^2_{H^1(a_{3j}, g_j(\bar{z}))^3} d\bar{z} \\
&\qquad \leq o(\kappa_\epsilon^{-1}) \, \kappa_\epsilon \norm{\eta_j}_{H^1(r_j(\Omega \cap V_j))^3}^2 \xrightarrow[\epsilon \to 0]{} 0.
\end{split}
\end{equation}
Here we have used the following one dimensional embedding estimate for Sobolev functions (see e.g., Burenkov \cite{bur}):
\begin{equation*}
\norm{f}_{L^\infty(a,b)} \leq c \norm{f}_{H^1(a,b)}
\end{equation*}
for all $f \in H^1(a,b)$, where the constant $c=c(d)$ is uniformly bounded for $\abs{b-a}>d$. We conclude that  \eqref{divtildephigoesto0} holds. 

Using \eqref{divtildephigoesto0}, the fact that $\Psi_{\epsilon,j}$ in $r^{-1}_j(K_{\epsilon,j})$ coincides with the identity and that \\
$\abs{(\Omega \cap V_j) \setminus r^{-1}_j(K_{\epsilon,j})} \to 0$ as $\epsilon$ goes to 0, we deduce that
\begin{equation} \label{divtildephionKepsilon}
\lim_{\epsilon \to 0} \int_{r^{-1}_j(K_{\epsilon,j})} \abs{\operatorname{div}\tilde{\varphi}_{\epsilon,j}}^2 = \int_\Omega \abs{\operatorname{div}\varphi_j}^2,
\end{equation}
and
\begin{equation} \label{divtildephigoestophi}
\lim_{\epsilon \to 0} \int_{\Omega_\epsilon} \abs{\operatorname{div}\tilde{\varphi}_{\epsilon,j}}^2 = \int_\Omega \abs{\operatorname{div}\varphi_j}^2.
\end{equation}

With \eqref{divtildephigoesto0}, \eqref{divtildephionKepsilon} and \eqref{divtildephigoestophi} in mind, in order to prove \eqref{divintegrale1}, it suffices to  reproduce the same argument used before   starting from \eqref{dottildephisplitintegral} combined with formula \eqref{setidentity}. We omit the details.  
Thus statement (iii) is proved. 

The proof of statement (iv) follows by the same considerations above. First of all, for any $j=1, \dots, s'$ the function $\tilde \varphi_{\epsilon, j}$ coincides with $\varphi_j$ 
on $r_j^{-1}(K_{\epsilon , j})$. Thus ${\mathcal P}_{\epsilon}\varphi =\varphi $ on 
 $(\cup_{j=1,\dots , s'}  r_j^{-1}(K_{\epsilon , j})  ) \cup  ( \cup_{j=s'+1, \dots , s} V_j)$. 
 It follows that 
 $$\|   {\mathcal P}_{\epsilon}\varphi - \varphi  \|_{X(\Omega_{\epsilon}\cap \Omega  )}  \le \|   {\mathcal P}_{\epsilon}\varphi - \varphi  \|_{X( \cup_{j=1,\dots , s'} (\Omega_{\epsilon}\cap V_j)   \setminus r_j^{-1}(K_{\epsilon , j}  )  )}
$$
This combined with  by the limiting relations \eqref{tildephigoesto0}, \eqref{curltildephigoesto0} and \eqref{divtildephigoesto0}  yields the validity of statement (iv). 
\end{proof}

\section{Spectral stability}
\label{sezione4}

Let $\Omega_\epsilon$,   $\epsilon >0$,  and $\Omega$ be bounded domains in $\mathbb{R}^3$ of class $C^{1,1}(\mathcal{A})$. For simplicity, it is convenient to  set $\Omega_0=\Omega$.  In this section, we prove that if  
 $\Omega_{\epsilon}$ converges to $\Omega$ as $\epsilon\to 0$ in the sense of  \eqref{assumptions}, and  a uniform Gaffney inequality holds on the domains $\Omega_{\epsilon}$ then we have spectral stability for the $\cu\cu $ operator defined on the domains $\Omega_{\epsilon}$ with respect to the reference domain $\Omega$.
By uniform Gaffney inequality, we mean that the spaces $\xn(\Omega_{\epsilon})$ are embedded into $H^1(\Omega_{\epsilon})^3$ and there exists a positive constant $C$ independent of $\epsilon$ such that 
 \begin{equation}\label{unigaff}
 \| u\|_{H^1(\Omega_{\epsilon})^3}\le C \| u\|_{\xn(\Omega_{\epsilon})}\, , 
 \end{equation}
 for all $u\in \xn(\Omega_{\epsilon})$ and $\epsilon >0$. (Note that by Theorem~\ref{gaffneythm}, for every $\epsilon>0$ there exists a positive constant $C_{\epsilon}$, possibly depending  on $\epsilon$, such that \eqref{unigaff} holds, but here we need a constant independent of $\epsilon$).

To do so,  for any $\epsilon \geq 0$, we denote by $S_{\epsilon}$ the operator $S_{\Omega_{\epsilon}}$ from $L^2(\Omega_{\epsilon})$ to itself defined in Lemma~\ref{esse}.
Recall that  if $f_\epsilon\in L^2(\Omega_\epsilon)^3$ is the datum of the following Poisson problem
\begin{equation}\label{Poissonprob}
 \left\{
        \begin{array}{ll}
            \operatorname{curl}\operatorname{curl} v_\epsilon -\tau \nabla \operatorname{div}v_\epsilon + v_\epsilon = f_\epsilon, & \quad \text{in } \Omega_\epsilon,\\
            \operatorname{div}v_\epsilon=0,& \quad \text{on } \partial \Omega_\epsilon,\\
            v_\epsilon \times \nu = 0, & \quad \text{on } \partial\Omega_\epsilon,
        \end{array}
    \right.
\end{equation}
then the unique solution $  v_{\epsilon }\in \xn(\Omega_{\epsilon })$ is precisely $S_{\epsilon}f_{\epsilon }$, that is $v_{\epsilon }=S_{\epsilon}f_{\epsilon }$.
Recall that $\tau $ is a fixed positive constant (one could normalise it by setting $\tau =1$ but we prefer to keep it as it is also with reference to other papers 
where it is important to have the possibility to use different values of $\tau$, see for example  \cite[Remark~2.13]{lamzac}).
In this section we  prove that $S_{\epsilon}$ compactly converges to $S_{0}$ as $\epsilon \to 0$, and this  implies spectra stability.     This has to be understood in the following sense.

We denote by $E=\{ E_\epsilon \}_{\epsilon >0} $ the family of the extension-by-zero operators $E_\epsilon:L^2(\Omega)^3 \to L^2(\Omega_\epsilon)^3$ defined by
\begin{equation}\label{def:extby0}
E_\epsilon \varphi =\varphi^0= \left\{
        \begin{array}{ll}
            \varphi, & \quad \text{if } x\in \Omega_\epsilon \cap \Omega,\vspace{1mm}\\
            0, & \quad \text{if } x \in \Omega_\epsilon \setminus \Omega  , 
        \end{array}
    \right.
\end{equation}
for all $\varphi \in L^2(\Omega)^3$.
Note that under our assumptions we have that  for all $\varphi \in L^2(\Omega)^3$
$$\lim_{\epsilon \to 0} \norm{E_\epsilon \varphi}_{L^2(\Omega_\epsilon)^3} = \norm{\varphi}_{L^2(\Omega)^3},$$
since
$\abs{\Omega \setminus (\Omega_\epsilon \cap \Omega)} \to 0$ as $\epsilon$ goes to 0.    We recall the following definition from \cite{Vai}, see also \cite{arcalo} and \cite{capi}. 

\begin{defn} Let $u_{\epsilon}\in  L^2(\Omega_{\epsilon })$, for $\epsilon >0$, be a family of functions. We say that $u_{\epsilon}$ $E$-converges to $u_0\in L^2(\Omega)$ as $\epsilon \to 0$ and we write $u_{\epsilon}\xrightarrow{E} u_0$  if 
$$
\| u_{\epsilon} -E_{\epsilon }u_0\|_{L^2(\Omega_{\epsilon })}\to 0,\ \ {\rm as}\ \ \epsilon \to 0\, .
$$
We also say that $S_{\epsilon }$   $E$-converges to $S_0$ as $\epsilon \to 0$ and we write $S_{\epsilon}\xrightarrow{EE} S_0$  if 
for any family of functions   $f_{\epsilon }\in L^2(\Omega_{\epsilon })$,  we have
$$
f_{\epsilon } \xrightarrow{E} f_0\ \    \Longrightarrow \ \     S_{\epsilon }f_{\epsilon } \xrightarrow{E} S_0f_0\, .
$$

Finally, we say that  $S_{\epsilon }$   $E$-compact converges  to $S_0$ as $\epsilon \to 0$ and we write $S_{\epsilon}\xrightarrow{C} S_0$ if $S_{\epsilon}\xrightarrow{EE} S_0$ and for any family of functions $f_{\epsilon }\in L^2(\Omega_{\epsilon })$, with $\| f_{\epsilon } \|_{L^2(\Omega_{\epsilon})}=1$ and any sequence of positive numbers $\epsilon_n$
with $\epsilon_n\to 0$, there exists a subsequence $\epsilon_{n_k}$ and $u\in L^2(\Omega)$ such that $  S_{\epsilon_{n_k}}f_{n_k}   \xrightarrow{E} u   $. 
 \end{defn}

The following theorem from \cite[Thm.~6.3]{Vai} holds.

\begin{theorem}\label{vainikko} If 
 $S_{\epsilon}\xrightarrow{C} S_0$ then the eigenvalues of the operator $S_{\epsilon}$ converge to the eigenvalues of the operator $S_0$, and   the eigenfunctions of the operator $S_{\epsilon}$ $E$-converge to the eigenfunctions of the operator $S_{0}$ as $\epsilon \to 0$. 
\end{theorem}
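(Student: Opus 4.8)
The plan is to follow the now-classical scheme for spectral convergence of operators acting on a varying family of Hilbert spaces (see \cite{Vai, arcalo}): first establish \emph{upper semicontinuity} of the spectrum straight from the compactness hypothesis, then prove \emph{discrete resolvent convergence} away from $\sigma(S_0)$, and finally combine the two by means of Riesz spectral projections along small contours. Before that I would record two elementary facts used throughout. (a) $E$-limits are unique: if $u_\epsilon \xrightarrow{E} u_0$ and $u_\epsilon \xrightarrow{E} u_0'$, then $\norm{E_\epsilon(u_0 - u_0')}_{L^2(\Omega_\epsilon)} \to 0$, and the asymptotic isometry $\norm{E_\epsilon\varphi}_{L^2(\Omega_\epsilon)} \to \norm{\varphi}_{L^2(\Omega)}$ forces $u_0 = u_0'$. (b) By polarization, $E$-convergence is compatible with scalar products: $u_\epsilon \xrightarrow{E} u_0$ and $w_\epsilon \xrightarrow{E} w_0$ imply $\langle u_\epsilon, w_\epsilon\rangle \to \langle u_0, w_0\rangle$. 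Recall also that by Lemma~\ref{esse} each $S_\epsilon$ is self-adjoint, nonnegative with $\norm{S_\epsilon} \le 1$, so that $\sigma(S_\epsilon)\setminus\{0\}$ is precisely the set of eigenvalues $\mu_n(\Omega_\epsilon)$, and $S_\epsilon$ is compact.

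For the upper semicontinuity I would take $\epsilon_k \to 0$ and eigenpairs $(\mu_{\epsilon_k}, u_{\epsilon_k})$ of $S_{\epsilon_k}$ with $\norm{u_{\epsilon_k}} = 1$ and $\mu_{\epsilon_k} \to \mu > 0$. By $S_\epsilon \xrightarrow{C} S_0$, along a subsequence $S_{\epsilon_k} u_{\epsilon_k} \xrightarrow{E} w$; hence $u_{\epsilon_k} = \mu_{\epsilon_k}^{-1} S_{\epsilon_k} u_{\epsilon_k} \xrightarrow{E} u_0 := \mu^{-1} w$, and by the asymptotic isometry $\norm{u_0} = 1$, so $u_0 \neq 0$. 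On one hand $S_{\epsilon_k} u_{\epsilon_k} = \mu_{\epsilon_k} u_{\epsilon_k} \xrightarrow{E} \mu u_0$; on the other, $S_\epsilon \xrightarrow{EE} S_0$ gives $S_{\epsilon_k} u_{\epsilon_k} \xrightarrow{E} S_0 u_0$; uniqueness of the $E$-limit yields $S_0 u_0 = \mu u_0$. Thus $\mu \in \sigma(S_0)$, $u_0$ is an eigenfunction, and the (sub)sequence of eigenfunctions $E$-converges to it. In particular no sequence of eigenvalues of $S_\epsilon$ can converge to a point of $(0,\infty)\setminus\sigma(S_0)$; since $S_\epsilon$ is compact, this means that for any circle $\gamma \subset \mathbb{C}\setminus(\sigma(S_0)\cup\{0\})$ one has $\gamma\cap\sigma(S_\epsilon)=\emptyset$ for $\epsilon$ small.

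Next I would prove discrete resolvent convergence: for $z \notin \sigma(S_0)$ with $z \neq 0$, the operators $z - S_\epsilon$ are boundedly invertible for $\epsilon$ small with $\sup_\epsilon \norm{(z-S_\epsilon)^{-1}} < \infty$, and $(z-S_\epsilon)^{-1} \xrightarrow{EE} (z-S_0)^{-1}$. Uniform invertibility follows by contradiction: unit vectors $u_{\epsilon_k}$ with $(z-S_{\epsilon_k})u_{\epsilon_k} \to 0$ would, by compact convergence, yield a subsequence with $S_{\epsilon_k} u_{\epsilon_k} \xrightarrow{E} w$, hence $u_{\epsilon_k} \xrightarrow{E} z^{-1} w =: u_0$ with $\norm{u_0} = 1$, and $S_\epsilon \xrightarrow{EE} S_0$ would force $(z-S_0)u_0 = 0$, impossible. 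For the convergence, given $f_\epsilon \xrightarrow{E} f_0$ put $u_\epsilon := (z-S_\epsilon)^{-1} f_\epsilon$ (bounded by the previous step); any subsequence has a further one with $S_{\epsilon_k} u_{\epsilon_k} \xrightarrow{E} w$, whence $u_{\epsilon_k} = z^{-1}(S_{\epsilon_k}u_{\epsilon_k} + f_{\epsilon_k}) \xrightarrow{E} z^{-1}(w + f_0) =: u_0$, and $S_\epsilon \xrightarrow{EE} S_0$ forces $S_0 u_0 = w$, i.e. $(z-S_0)u_0 = f_0$, so $u_0 = (z-S_0)^{-1} f_0$ independently of the subsequence. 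A routine covering argument makes all this uniform for $z$ on a fixed compact curve.

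Finally, to conclude I would fix a nonzero eigenvalue $\mu_0$ of $S_0$ and a small circle $\gamma \subset \mathbb{C}\setminus(\sigma(S_0)\cup\{0\})$ around $\mu_0$ separating it from the rest of $\sigma(S_0)$, and form the Riesz projections $P_\epsilon := \frac{1}{2\pi i}\oint_\gamma (z-S_\epsilon)^{-1}\,dz$, well defined for $\epsilon$ small. Resolvent convergence plus continuity of the integrand give $P_\epsilon \xrightarrow{EE} P_0$, with $P_0$ the eigenprojection of $S_0$ at $\mu_0$ of rank $m := \dim\ker(\mu_0 - S_0)$. The heart of the matter is then the rank stability $\operatorname{rank} P_\epsilon = m$ for $\epsilon$ small. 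For $\operatorname{rank} P_\epsilon \ge m$: if $e_1,\dots,e_m$ is an orthonormal basis of $\operatorname{range} P_0$, then $P_\epsilon E_\epsilon e_l \in \operatorname{range} P_\epsilon$ and $P_\epsilon E_\epsilon e_l \xrightarrow{E} P_0 e_l = e_l$, so by (b) their Gram matrices converge to the identity and the vectors are linearly independent for $\epsilon$ small. For $\operatorname{rank} P_\epsilon \le m$: were $\dim\operatorname{range} P_{\epsilon_k} \ge m+1$ for some $\epsilon_k \to 0$, I would pick orthonormal eigenfunctions $v_k^{(1)},\dots,v_k^{(m+1)} \in \operatorname{range} P_{\epsilon_k}$; compact convergence and the upper semicontinuity step give (after a subsequence and diagonalization) $v_k^{(l)} \xrightarrow{E} v_0^{(l)} \in \ker(\mu_0 - S_0)$, and (b) makes the $v_0^{(l)}$ orthonormal, which is impossible in an $m$-dimensional space. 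Hence for $\epsilon$ small $S_\epsilon$ has exactly $m$ eigenvalues inside $\gamma$ (with multiplicity), all converging to $\mu_0$, with a choice of eigenfunctions $E$-converging to a basis of $\ker(\mu_0 - S_0)$; since $\mu_0$ was arbitrary, together with the upper semicontinuity step this is the asserted convergence of eigenvalues and $E$-convergence of eigenfunctions. I expect the rank stability in this last step to be the \textbf{main obstacle}: it is precisely the statement that no spurious eigenvalues of $S_\epsilon$ appear near $\mu_0$ and that the eigenspaces do not degenerate, and it is the one place where the \emph{compactness} part of $S_\epsilon \xrightarrow{C} S_0$ is genuinely needed, beyond mere $EE$-convergence.
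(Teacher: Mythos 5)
Your proof is correct, but it does not follow the paper, because the paper does not prove this statement at all: Theorem~\ref{vainikko} is imported verbatim as \cite[Thm.~6.3]{Vai}, so the ``paper's proof'' is a citation to Vainikko's general theory of regular and compact convergence of operators on varying spaces. What you have written is a self-contained reconstruction of that theory's spectral convergence argument in the specific setting of the operators $S_\epsilon$ of Lemma~\ref{esse}, and the three-step scheme (upper semicontinuity from the compactness half of $\xrightarrow{C}$, discrete convergence of $(z-S_\epsilon)^{-1}$ with uniform bounds via a contradiction argument, then Riesz projections and rank stability) is exactly the skeleton of Vainikko's proof, so the mathematical content matches what the paper relies on. Two routine points deserve a sentence if you write this out in full. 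First, the paper takes $L^2(\Omega_\epsilon)$ as \emph{real} Hilbert spaces, so the contour integral $\oint_\gamma(z-S_\epsilon)^{-1}\,dz$ requires a complexification; since the $S_\epsilon$ are self-adjoint you could instead use the spectral projections onto an interval $[\mu_0-\delta,\mu_0+\delta]$ and avoid the contour altogether, at the cost of re-proving $P_\epsilon\xrightarrow{EE}P_0$ by the same subsequence argument you use for the resolvents. Second, the statement the paper actually uses is the ordered convergence $\mu_n(\epsilon)\to\mu_n(0)$ of items (i)--(iii) following the theorem; this does follow from your local multiplicity conservation near each $\mu_n(0)$ combined with the upper semicontinuity step (which rules out spurious eigenvalues in any compact subset of $(0,\infty)\setminus\sigma(S_0)$), but it needs a short final counting argument over the finitely many eigenvalues of $S_0$ above a threshold $t\notin\sigma(S_0)$. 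Neither point is a gap; your identification of the rank stability as the place where compact convergence (and not just $EE$-convergence) is genuinely needed is accurate and is precisely the content of Vainikko's theorem.
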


If we denote by $\mu_n(\epsilon)$, $n\in \mathbb{N}$ the sequence of eigenvalues of $S_{\epsilon}$ and by $u_n(\epsilon)$,  $n\in \mathbb{N}$ a corresponding orthonormal sequence of eigenfunctions, then the stability of eigenvalues and eigenfunctions stated above has to be interpreted in the following sense:

\begin{itemize}
 \item[(i)]       $\mu_n(\epsilon )\to \mu_n(0)$ as $\epsilon \to 0$.
 \item[(ii)] For any sequence $\epsilon_k$, $k\in  \mathbb{N}$,  converging to zero there exists an orthonormal sequence of eigenfunctions  $u_n(0)$, $n\in  \mathbb{N}$ in $L^2(\Omega)^3$ such that, possibly passing to a subsequence of $\epsilon_k$,   $u_n(\epsilon_k)   \xrightarrow{E}    u_n(0)$.
 \item[(iii)]  Given  $m$ eigenvalues $\mu_n(0),  \dots , \mu_{n+m-1}(0)$ with
$\mu_n(0)\ne \mu_{n-1}(0)$ and $\mu_{n+m-1}(0)\ne \mu_{n+m}(0)$
 and corresponding orthonormal eigenfunctions $u_n(0),  \dots , u_{n+m-1}(0)$,
 there exist $m$ orthonormal   generalized eigenfunctions (i.e. linear combinations of eigenfunctions)   $v_n(\epsilon ),  \dots , v_{n+m-1}(\epsilon )$  associated with  $\mu_n(\epsilon ),  \dots ,
  \mu_{n+m-1}(\epsilon )$   such that $v_{n+i}(\epsilon ) \xrightarrow{E} u_{n+i}(0)$  for all $i=0, 1,\dots , m-1$.
 \end{itemize}

Recall  that $\mu $ is an eigenvalue of $S_\epsilon$ if and only if $\lambda=\mu^{-1}$ is an eigenvalue of the problem
\begin{equation}\label{eigen}
\begin{cases}
\operatorname{curl}\operatorname{curl} v_\epsilon -\tau \nabla \operatorname{div}v_\epsilon +v_\epsilon =\lambda v_\epsilon, & \text{in }\Omega_\epsilon,\\
\operatorname{div}v_\epsilon=0, & \text{on }\partial\Omega_\epsilon,\\
v_\epsilon \times \nu =0, & \text{on }\partial\Omega_\epsilon.
\end{cases}
\end{equation}
and that the corresponding eigenfunctions are the same. Note that  the eigenvalues of \eqref{eigen} differ from those of \eqref{mainpen} just by a translation. Thus, studying the stability of eigenvalues and eigenfunctions of the problem \eqref{eigen} or \eqref{mainpen},  is equivalent to studying the spectral stability of the family of operators $S_{\epsilon}$.  To do so, we recall that the weak formulation of problem \eqref{Poissonprob}
reads as follows: find  $v_\epsilon \in \xn(\Omega_\epsilon)$ such that
\begin{equation}\label{Poissonprobweak}
\int_{\Omega_\epsilon} v_\epsilon \cdot \eta \, dx + \int_{\Omega_\epsilon} \operatorname{curl}v_\epsilon \cdot \operatorname{curl}\eta \, dx + \tau \int_{\Omega_\epsilon} \operatorname{div}v_\epsilon \operatorname{div}\eta \, dx = \int_{\Omega_\epsilon} f_\epsilon \cdot \eta \, dx
\end{equation}
for all $\eta \in \xn(\Omega_\epsilon)$.

Suppose that for every $\epsilon>0$ we have that $\norm{f_\epsilon}_{L^2(\Omega_\epsilon)^3} \leq C$ for some $C>0$. Then, setting  $\eta = v_\epsilon$ in  \eqref{Poissonprobweak} and observing that $\int_{\Omega_\epsilon} f_\epsilon \cdot v_\epsilon \, dx \leq \frac{1}{2} \int_{\Omega_\epsilon}\abs{f_\epsilon}^2 \, dx + \frac{1}{2} \int_{\Omega_\epsilon} \abs{v_\epsilon}^2 \, dx$, we get 
$$\frac{1}{2}\int_{\Omega_\epsilon} \abs{v_\epsilon}^2 \, dx + \int_{\Omega_\epsilon} \abs{\operatorname{curl}v_\epsilon}^2 \, dx + \tau \int_{\Omega_\epsilon} \abs{\operatorname{div}v_\epsilon}^2 \, dx \leq \frac{1}{2} \int_{\Omega_\epsilon} \abs{f_\epsilon}^2 \, dx.$$
This in turn implies that for all $\epsilon>0$
\begin{equation} \label{normveps}
\begin{split}
\norm{v_\epsilon}_{\xn(\Omega_\epsilon)} &= \left( \norm{v_\epsilon}^2_{L^2(\Omega_\epsilon)^3} + \,\norm{\operatorname{curl} v_\epsilon}^2_{L^2(\Omega_\epsilon)^3} + \,\norm{\operatorname{div}v_\epsilon}^2_{L^2(\Omega_\epsilon)} \right)^{1/2}
\leq c \norm{f_\epsilon}_{L^2(\Omega_\epsilon)} =O(1). 
\end{split}
\end{equation}

In order to prove the $E$-convergence of the operators $S_{\epsilon}$, it is necessary to consider the limit of functions $v_{\epsilon}$. We note that if $\Omega \subset \Omega_{\epsilon}$ for all $\epsilon >0$ then it would  suffice to consider the restriction of $v_{\epsilon}$ to $\Omega$ and pass to the weak limit in $\Omega$. Otherwise, it is convenient to extend functions $v_{\epsilon }$ to the whole of ${\mathbb{R}}^3$. To do so, we observe that by   the uniform Gaffney inequality combined with 
inequality \eqref{normveps} it follows that $\| v_{\epsilon }\|_{H^1(\Omega_{\epsilon})^3}$ is uniformly bounded. Moreover, the domains $\Omega_{\epsilon}$ belong to the same Lipschitz class $C^{0,1}_M({\mathcal{A}})$ for some $M>0$ hence the functions $v_{\epsilon }$ can  be extended to the whole of ${\mathbb{R}}^3$
with a uniformly bounded norm, see e.g., \cite{bur}. Thus, in the sequel we shall directly make the following assumption: 
\begin{equation}\label{globalass}
v_{\epsilon }\in H^1(\mathbb{R}^3)^3\cap \xn(\Omega_{\epsilon}),\ \ \sup_{\epsilon>0}\| v_{\epsilon }\|_{\in H^1(\mathbb{R}^3)^3}\ne \infty\, .
\end{equation}
Thus the family  $\{v_\epsilon\rvert_{ \Omega }  \}_{\epsilon>0}$ is bounded in $H(\operatorname{curl}; \Omega) \cap H(\operatorname{div}; \Omega)$ and  we can extract a sequence $\{v_{\epsilon_n}\rvert_\Omega\}_{n \in \mathbb{N}}$, with $\epsilon_n \to 0$ as $n$ goes to $\infty$, such that
\begin{equation} \label{def:weaklimit:v}
v_{\epsilon_n}\rvert_\Omega \underset{n \to \infty}{\rightharpoonup}  v  \qquad \text{weakly in } H(\operatorname{curl}; \Omega) \cap H(\operatorname{div}; \Omega)
\end{equation}
for some $v \in H(\operatorname{curl}; \Omega) \cap H(\operatorname{div}; \Omega)$. It turns out that $v$ preserves the boundary conditions as the following lemma clarifies.

\begin{lemma} \label{v:is:XN} Assume  that for some $\epsilon_n>0$ with $\epsilon_n\to 0$, there exists $v\in H(\operatorname{curl},\Omega) \cap H(\operatorname{div},\Omega)$ such that   $\{v_{\epsilon_n}\rvert_\Omega\}_{n \in \mathbb{N}}$ weakly converges to $v$ in $H(\operatorname{curl},\Omega) \cap H(\operatorname{div},\Omega)$. Then $v \in \xn(\Omega)$.
\end{lemma}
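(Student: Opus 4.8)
The plan is to show that the weak limit $v$ inherits the electric boundary condition $\nu\times v=0$ on $\partial\Omega$, i.e.\ that $v\in H_0(\operatorname{curl},\Omega)$; since $v$ already lies in $H(\operatorname{div},\Omega)$, this gives $v\in\xn(\Omega)$. The natural route is to use the characterization of $H_0(\operatorname{curl},\Omega)$ via extension-by-zero (Lemma~\ref{extbyzero}): I must prove that $v^0\in H(\operatorname{curl},\mathbb{R}^3)$, equivalently that
\[
\int_{\Omega} v\cdot\operatorname{curl}\varphi\,dx=\int_{\Omega}\operatorname{curl}v\cdot\varphi\,dx\qquad\text{for all }\varphi\in (C^\infty_c(\mathbb{R}^3))^3,
\]
where the integrals are over $\Omega$ because $v^0$ vanishes outside $\Omega$. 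The key point is that each $v_{\epsilon_n}$ already satisfies the analogous identity on $\Omega_{\epsilon_n}$: since $v_{\epsilon_n}\in H_0(\operatorname{curl},\Omega_{\epsilon_n})$ and $\Omega_{\epsilon_n}$ is Lipschitz, Lemma~\ref{extbyzero} gives $v_{\epsilon_n}^0\in H(\operatorname{curl},\mathbb{R}^3)$ with $\operatorname{curl}(v_{\epsilon_n}^0)=(\operatorname{curl}v_{\epsilon_n})^0$, hence
\[
\int_{\Omega_{\epsilon_n}} v_{\epsilon_n}\cdot\operatorname{curl}\varphi\,dx=\int_{\Omega_{\epsilon_n}}\operatorname{curl}v_{\epsilon_n}\cdot\varphi\,dx\qquad\text{for all }\varphi\in (C^\infty_c(\mathbb{R}^3))^3.
\]

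First I would fix $\varphi\in (C^\infty_c(\mathbb{R}^3))^3$ and split each integral over $\Omega_{\epsilon_n}$ into the part over $\Omega_{\epsilon_n}\cap\Omega$ and the part over $\Omega_{\epsilon_n}\setminus\Omega$. On the former, the weak convergence \eqref{def:weaklimit:v} in $H(\operatorname{curl},\Omega)$ together with strong $L^2$-convergence of the (fixed) test fields $\operatorname{curl}\varphi$ and $\varphi$ — actually just the fact that $v_{\epsilon_n}\rvert_\Omega\rightharpoonup v$ and $\operatorname{curl}v_{\epsilon_n}\rvert_\Omega\rightharpoonup\operatorname{curl}v$ weakly in $L^2(\Omega)^3$, tested against the fixed $L^2$ functions $\operatorname{curl}\varphi\rvert_\Omega$ and $\varphi\rvert_\Omega$ — lets me pass to the limit and recover exactly $\int_\Omega v\cdot\operatorname{curl}\varphi\,dx$ and $\int_\Omega\operatorname{curl}v\cdot\varphi\,dx$. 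On the latter, the assumption \eqref{globalass} gives a uniform $H^1(\mathbb{R}^3)^3$ (hence uniform $L^2$) bound on $v_{\epsilon_n}$ and on $\operatorname{curl}v_{\epsilon_n}$, while $|\Omega_{\epsilon_n}\setminus\Omega|\to 0$ by the convergence \eqref{assumptions} of the profile functions; by Hölder's inequality the integrals over $\Omega_{\epsilon_n}\setminus\Omega$ are bounded by $c\,|\Omega_{\epsilon_n}\setminus\Omega|^{1/2}\|\varphi\|_\infty\to 0$ (here one uses that $\varphi$ has compact support, so the $L^2$ norm of $\varphi$ over $\Omega_{\epsilon_n}\setminus\Omega$ is controlled by $\|\varphi\|_\infty|\Omega_{\epsilon_n}\setminus\Omega|^{1/2}$). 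Thus in the limit the two remaining integrals over $\Omega$ coincide, which is precisely \eqref{weak:curl:Omega} for $v$; by \cite[Lemma 2.4]{gira} (equivalently by Lemma~\ref{extbyzero}) we conclude $v\in H_0(\operatorname{curl},\Omega)$, and since $v\in H(\operatorname{div},\Omega)$ as well, $v\in\xn(\Omega)$.

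The main obstacle is the treatment of the "symmetric difference" region $\Omega_{\epsilon_n}\setminus\Omega$ (and conversely, checking that no contribution is lost over $\Omega\setminus\Omega_{\epsilon_n}$): one needs both that its measure vanishes — which follows from (i)–(ii) of \eqref{assumptions} — and a uniform $L^2$ bound on $v_{\epsilon_n}$ and $\operatorname{curl}v_{\epsilon_n}$ independent of $\epsilon_n$, which is exactly what the uniform Gaffney inequality and the uniform extension property, encoded in \eqref{globalass}, provide. Once these two facts are in hand the passage to the limit is routine. A minor point worth stating carefully is that $\operatorname{curl}v$ is the weak limit of $\operatorname{curl}v_{\epsilon_n}\rvert_\Omega$ (part of the hypothesis), so that the right-hand side genuinely converges to $\int_\Omega\operatorname{curl}v\cdot\varphi\,dx$ and not merely to the integral of some other limit field.
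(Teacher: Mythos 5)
Your proof is correct, and it reaches the conclusion by a mechanism different from the paper's, although both arguments pivot on Lemma~\ref{extbyzero}. The paper argues ``softly'': it extends each $v_{\epsilon_n}$ by zero, uses reflexivity of $H(\operatorname{curl},\mathbb{R}^3)$ to extract a weak limit $\tilde v$ of the sequence $v^0_{\epsilon_n}$, observes that $\tilde v$ vanishes outside $\Omega$ and coincides with $v$ inside, so that $v^0=\tilde v\in H(\operatorname{curl},\mathbb{R}^3)$, and then applies Lemma~\ref{extbyzero} once more. You instead verify the distributional identity \eqref{weak:curl:Omega} for $v$ directly, by passing to the limit in the corresponding identity on $\Omega_{\epsilon_n}$ and estimating the contributions of $\Omega_{\epsilon_n}\setminus\Omega$ and $\Omega\setminus\Omega_{\epsilon_n}$ via the uniform $L^2$ bound from \eqref{globalass} and the vanishing measure of the symmetric difference. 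The trade-off is transparency versus brevity: the paper's identification step ``$\tilde v=0$ a.e.\ in $\mathbb{R}^3\setminus\Omega$'' is asserted as clear but in fact rests on exactly the two ingredients you make explicit (uniform boundedness plus $|\Omega_{\epsilon_n}\triangle\Omega|\to0$), so your version surfaces the hypotheses actually being consumed; on the other hand the paper's route avoids all explicit splitting of integrals and is shorter, at the cost of passing to a further subsequence (harmless, since the limit is identified uniquely). One small point to keep in mind if you write your argument out in full: the integral over $\Omega_{\epsilon_n}\cap\Omega$ is not yet the integral over $\Omega$ against which the weak convergence is stated, so the correction term over $\Omega\setminus\Omega_{\epsilon_n}$ (involving the $H^1(\mathbb{R}^3)$ extension of $v_{\epsilon_n}$) must be estimated explicitly --- you flag this correctly in your closing paragraph, and the same uniform bound disposes of it.
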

\begin{proof}
To prove that $v \in \xn(\Omega)$ we just need to make sure  that $v\in H_0(\operatorname{curl},\Omega)$.
Since $v_{\epsilon_n} \in H_0(\operatorname{curl},\Omega_\epsilon)$ for all $n \in \mathbb{N}$, by \Cref{extbyzero} we know that the extension-by-zero $v_{\epsilon_n}^0$ of $v_{\epsilon_n}$ belongs to $H(\operatorname{curl},\mathbb{R}^3)$ for all $n \in \mathbb{N}$. By the reflexivity of $H(\operatorname{curl},\mathbb{R}^3)$ and the boundedness of the  sequence $\set{v^0_{\epsilon_n}}_{n \in \mathbb{N}}$, we deduce that possibly passing to  a subsequence, there exists a function $\tilde{v} \in H(\operatorname{curl}, \mathbb{R}^3)$ such that $v^0_{\epsilon_n} \rightharpoonup \tilde{v}$ weakly in $H(\operatorname{curl}, \mathbb{R}^3)$ as $n$ goes to $\infty$. It suffices to show that $\tilde{v}=v^0$.  Since $v^0_{\epsilon_n}$ is equal to zero outside of $\Omega_{\epsilon_n}$, it is clear  that $\tilde{v}=0$ a.e. in $\mathbb{R}^3 \setminus \Omega$. Moreover, since  $v_{\epsilon_n}\rvert_\Omega$ weakly converges to both $v, \tilde v$ in $H(\operatorname{curl}, \Omega )$, we have that $v=\tilde v$  a.e. in $\Omega$. Thus the extension 
by zero of $v$ to the whole of $\mathbb{R}^3$ is precisely $\tilde v$ and belongs to $ H(\operatorname{curl},{\mathbb{R}^3}) $. Using \Cref{extbyzero} again, we see that $v \in H_0(\operatorname{curl},\Omega)$. 
\end{proof}

\begin{lemma} \label{limitisthesolution}
 Assume that  condition \eqref{assumptions} and  the uniform Gaffney inequality \eqref{unigaff} hold. 
For any $\epsilon >0$  let $f_\epsilon \in L^2(\Omega_\epsilon)^3$.  Suppose that $\sup_{\epsilon > 0}\norm{f_\epsilon}_{L^2(\Omega_\epsilon)^3} \ne \infty$ and that the extension-by-zero of the functions $f_\epsilon$ converge weakly in $L^2(\Omega)^3$ to some function $f \in L^2(\Omega)^3$  as $\epsilon \to 0$. For all $\epsilon >0$, let  $v_\epsilon := S_\epsilon f_\epsilon$ the (unique) weak solution in $\xn(\Omega_\epsilon)$ of \eqref{Poissonprobweak} with datum $f_\epsilon$. Assume \eqref{globalass} and suppose that $v_\epsilon \rightharpoonup v$ weakly in $H(\operatorname{curl}, \Omega) \cap H(\operatorname{div}, \Omega)$ to some $v\in  H(\operatorname{curl}, \Omega) \cap H(\operatorname{div}, \Omega)$. Then $v = S_0 f$.
\end{lemma}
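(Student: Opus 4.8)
The plan is to verify that the weak limit $v$ is the unique weak solution of the Poisson problem \eqref{Poissonprobweak} on the fixed domain $\Omega$ with datum $f$, which by definition of $S_0$ (see Lemma~\ref{esse}) means $v=S_0f$; here one uses that the bilinear form $(w,\eta)\mapsto \int_\Omega w\cdot\eta\,dx+\int_\Omega \cu w\cdot\cu\eta\,dx+\tau\int_\Omega \di w\,\di\eta\,dx$ is coercive on $\xn(\Omega)$, so that such a solution is unique. By Lemma~\ref{v:is:XN} we already know $v\in\xn(\Omega)$, so it remains only to establish the variational identity
\[
\int_\Omega v\cdot\varphi\,dx+\int_\Omega \cu v\cdot\cu\varphi\,dx+\tau\int_\Omega \di v\,\di\varphi\,dx=\int_\Omega f\cdot\varphi\,dx\qquad\text{for all }\varphi\in\xn(\Omega).
\]

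Fix $\varphi\in\xn(\Omega)$ and test \eqref{Poissonprobweak} (with datum $f_\epsilon$) against the Atlas Piola transform $\mathcal{P}_\epsilon\varphi\in\xn(\Omega_\epsilon)$, which is admissible by Theorem~\ref{Piolamain}(i). I would then split each integral over $\Omega_\epsilon$ into its part over $\Omega_\epsilon\cap\Omega$ and its part over $\Omega_\epsilon\setminus\Omega$. For the latter, the key observation is that $\|\mathcal{P}_\epsilon\varphi\|_{X(\Omega_\epsilon\setminus\Omega)}\to 0$: given $\delta>0$ choose a compact $\mathcal{K}\subset\Omega$ with $\|\varphi\|_{X(\Omega\setminus\mathcal{K})}<\delta$; by Theorem~\ref{Piolamain}(ii), for $\epsilon$ small one has $\mathcal{K}\subset\Omega_\epsilon\cap\Omega$ and $\mathcal{P}_\epsilon\varphi=\varphi$ on $\mathcal{K}$, hence $\|\mathcal{P}_\epsilon\varphi\|_{X(\Omega_\epsilon\setminus\Omega)}^2\le\|\mathcal{P}_\epsilon\varphi\|_{\xn(\Omega_\epsilon)}^2-\|\varphi\|_{X(\mathcal{K})}^2$, and the right-hand side tends to $\|\varphi\|_{\xn(\Omega)}^2-\|\varphi\|_{X(\mathcal{K})}^2=\|\varphi\|_{X(\Omega\setminus\mathcal{K})}^2<\delta^2$ by Theorem~\ref{Piolamain}(iii). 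Since $\|v_\epsilon\|_{\xn(\Omega_\epsilon)}=O(1)$ by \eqref{normveps} and $\|f_\epsilon\|_{L^2(\Omega_\epsilon)^3}=O(1)$, the Cauchy--Schwarz inequality shows that all contributions from $\Omega_\epsilon\setminus\Omega$ vanish as $\epsilon\to0$.

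For the integrals over $\Omega_\epsilon\cap\Omega$ I would pass to the fixed domain $\Omega$ via a weak--strong pairing. Writing, for instance, $\int_{\Omega_\epsilon\cap\Omega}\cu v_\epsilon\cdot\cu\mathcal{P}_\epsilon\varphi\,dx=\int_\Omega (\cu v_\epsilon)\rvert_\Omega\cdot z_\epsilon\,dx$ with $z_\epsilon:=\cu\mathcal{P}_\epsilon\varphi$ on $\Omega_\epsilon\cap\Omega$ and $z_\epsilon:=0$ on $\Omega\setminus\Omega_\epsilon$, one has $\|z_\epsilon-\cu\varphi\|_{L^2(\Omega)^3}\le\|\cu\mathcal{P}_\epsilon\varphi-\cu\varphi\|_{L^2(\Omega_\epsilon\cap\Omega)^3}+\|\cu\varphi\|_{L^2(\Omega\setminus\Omega_\epsilon)^3}\to0$, using Theorem~\ref{Piolamain}(iv) together with $|\Omega\setminus\Omega_\epsilon|\to0$ (a consequence of \eqref{assumptions}) and absolute continuity of the integral; since $(\cu v_\epsilon)\rvert_\Omega\rightharpoonup\cu v$ weakly in $L^2(\Omega)^3$ by hypothesis, this term converges to $\int_\Omega\cu v\cdot\cu\varphi\,dx$. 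The $L^2$-term and the divergence term are treated identically, using again Theorem~\ref{Piolamain}(iv) and the weak convergence of $v_\epsilon\rvert_\Omega$ in $H(\cu,\Omega)\cap H(\di,\Omega)$ (here \eqref{globalass} guarantees the needed boundedness); the right-hand side $\int_{\Omega_\epsilon\cap\Omega}f_\epsilon\cdot\mathcal{P}_\epsilon\varphi\,dx$ is handled the same way, now pairing the weak $L^2(\Omega)^3$-convergence of the extensions by zero of $f_\epsilon$ to $f$ against the strong $L^2$-convergence of $\mathcal{P}_\epsilon\varphi$ to $\varphi$ on $\Omega$. Collecting the limits gives exactly the variational identity displayed above, whence $v=S_0f$.

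The step I expect to be the main obstacle is the bookkeeping in the limit over the moving sets $\Omega_\epsilon\cap\Omega$: the family $v_\epsilon$ converges only weakly, so it must be paired against test functions converging strongly in the appropriate norms, and the only strong information available about $\mathcal{P}_\epsilon\varphi$ is the $X$-norm convergence from Theorem~\ref{Piolamain}(iv) combined with the localisation property~(ii). Making this rigorous requires carefully separating the contributions on $\Omega_\epsilon\cap\Omega$, on $\Omega_\epsilon\setminus\Omega$ and on $\Omega\setminus\Omega_\epsilon$, and exploiting \eqref{assumptions} to control the measures of the symmetric differences; the rest is a routine weak--strong convergence argument.
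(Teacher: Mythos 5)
Your proposal is correct and follows essentially the same route as the paper: test \eqref{Poissonprobweak} against the Atlas Piola transform $\mathcal{P}_\epsilon\varphi$, control the contributions from $\Omega_\epsilon\setminus\Omega$ and $\Omega\setminus\Omega_\epsilon$ via Cauchy--Schwarz and the measure estimates, and pass to the limit on the common part by pairing the weak convergence of $v_\epsilon$ (and of the zero-extensions of $f_\epsilon$) against the strong convergence supplied by Theorem~\ref{Piolamain}\,(iv). The only cosmetic difference is that you deduce $\norm{\mathcal{P}_\epsilon\varphi}_{X(\Omega_\epsilon\setminus\Omega)}\to 0$ by a soft argument from parts (ii)--(iii) of Theorem~\ref{Piolamain}, where the paper cites the local-chart estimates \eqref{tildephigoesto0}, \eqref{curltildephigoesto0}, \eqref{divtildephigoesto0} directly; both are valid.
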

\begin{proof}
First of all, we note that by \Cref{v:is:XN}, $v\in \xn(\Omega)$. 
Define for $u,w \in H(\operatorname{curl},\Omega_\epsilon) \cap H(\operatorname{div},\Omega_\epsilon)$
\begin{equation*}
Q_{\Omega_\epsilon} (u,w):= \int_{\Omega_\epsilon} u \cdot w \, dx + \int_{\Omega_\epsilon} \operatorname{curl}u \cdot \operatorname{curl}w \, dx + \tau \int_{\Omega_\epsilon} \operatorname{div}u \cdot \operatorname{div}w \, dx,
\end{equation*}
which is equivalent to  the scalar product for the space $H(\operatorname{curl},\Omega_\epsilon) \cap H(\operatorname{div},\Omega_\epsilon)$. The square of the induced norm will be denoted with $Q_{\Omega_\epsilon}(\cdot)$.
Note that since $v_\epsilon$ is the solution with datum $f_\epsilon$, then we have that 
$$Q_{\Omega_\epsilon}(v_\epsilon, \eta) = \int_{\Omega_\epsilon} f_\epsilon \cdot \eta$$
for all $\eta \in \xn(\Omega_\epsilon)$.

Let $\varphi$ be any function in $\xn(\Omega)$ and let ${\mathcal{P}}_{\epsilon }\varphi $ the Atlas Piola trasform of $\varphi$. Since 
${\mathcal{P}}_{\epsilon }\varphi \in \xn(\Omega_{\epsilon})$, we deduce tha
\begin{equation}
\label{eqpiola}
Q_{\Omega_\epsilon}(v_\epsilon, {\mathcal{P}}_{\epsilon }\varphi )= \int_{\Omega_\epsilon}     f_\epsilon \cdot  {\mathcal{P}}_{\epsilon }\varphi 
\end{equation}
for all $\epsilon>0$.  We now show that
\begin{equation} \label{limitfeps}
\lim_{\epsilon \to 0} \int_{\Omega_\epsilon}     f_\epsilon \cdot  {\mathcal{P}}_{\epsilon }\varphi = \int_\Omega f \cdot \varphi
\end{equation}
and
\begin{equation} \label{limitQeps}
\lim_{\epsilon \to 0} Q_{\Omega_\epsilon}(v_\epsilon, {\mathcal{P}}_{\epsilon }\varphi ) = Q_\Omega(v, \varphi)
\end{equation}
In order to prove the first limit, it suffices to  prove that 
\begin{equation} \label{fepsj'}
\int_{\Omega_\epsilon \cap V_j} f_\epsilon \cdot \tilde{\varphi}_{\epsilon,j} \xrightarrow[\epsilon \to 0]{} \int_{\Omega\cap V_j} f \cdot \varphi_j
\end{equation}
for any $j=1,\dots,s'$,   where  $\tilde{\varphi}_{\epsilon,j} $ is defined in \eqref{tildefij} (with $\tilde \Omega $ replaced by $\Omega_{\epsilon}$),    since it is obvious that 
\begin{equation} \label{fepsj}
\int_{\Omega_\epsilon \cap V_j} f_\epsilon \cdot \varphi_j \xrightarrow[\epsilon \to 0]{} \int_{\Omega\cap V_j} f \cdot \varphi_j
\end{equation}
for any $j=s'+1,\dots,s$. 
We have that 
\begin{equation} \label{splitintegralfeps}
\int_{\Omega_\epsilon \cap V_j} f_\epsilon \cdot \tilde{\varphi}_{\epsilon,j} = \int_{r_j^{-1}(K_{\epsilon,j})} f_\epsilon \cdot \varphi_j  + \int_{(\Omega_\epsilon \cap V_j) \setminus r_j^{-1}(K_{\epsilon,j})} f_\epsilon \cdot \tilde{\varphi}_{\epsilon,j}.
\end{equation}
Obviously
\begin{equation} \label{fepsonKeps}
\lim_{\epsilon \to 0} \int_{r_j^{-1}(K_{\epsilon,j})} f_\epsilon \cdot \varphi_j = \int_{\Omega \cap V_j} f \cdot \varphi_j
\end{equation}
since the extension-by-zero of $f_\epsilon$ weakly converge to $f$ in $L^2(\Omega)^3$, $\sup_{\epsilon >0} \norm{f_\epsilon}_{L^2(\Omega_\epsilon)^3} < \infty$ and $\abs{(\Omega \cap V_j) \setminus r_j^{-1}(K_{\epsilon,j})}$ goes to 0 as $\epsilon \to 0$.
Meanwhile
\begin{equation} \label{fepsgoesto0}
\begin{split}
&\left|\int_{(\Omega_\epsilon \cap V_j) \setminus r_j^{-1}(K_{\epsilon,j})} f_\epsilon \cdot \tilde{\varphi}_{\epsilon,j} \right|\leq \norm{f_\epsilon}_{L^2(\Omega_\epsilon)^3} \left(\int_{(\Omega_\epsilon \cap V_j) \setminus r_j^{-1}(K_{\epsilon,j})} \abs{\tilde{\varphi}_{\epsilon,j}}^2 \right)^\frac{1}{2} \xrightarrow[\epsilon \to 0]{}0
\end{split}
\end{equation}
by \eqref{tildephigoesto0} and the hypothesis that $\sup_{\epsilon > 0}\norm{f_\epsilon}_{L^2(\Omega_\epsilon)^3} \ne \infty$.
From \eqref{splitintegralfeps}, \eqref{fepsonKeps} and \eqref{fepsgoesto0} we immediately deduce \eqref{fepsj'}.
 Hence we have proved \eqref{limitfeps}.

Let us now focus on \eqref{limitQeps}.  We write 
\begin{eqnarray}\lefteqn{
 Q_{\Omega_\epsilon}(v_\epsilon, {\mathcal{P}}_{\epsilon }\varphi  ) =  Q_{\Omega_\epsilon \cap \Omega }(v_\epsilon, {\mathcal{P}}_{\epsilon }\varphi  )+
  Q_{\Omega_\epsilon \setminus \Omega }(v_\epsilon, {\mathcal{P}}_{\epsilon }\varphi  )  }\nonumber \\
  & &= Q_{\Omega_\epsilon \cap \Omega }(v_\epsilon, {\mathcal{P}}_{\epsilon }\varphi  -\varphi )+
  Q_{\Omega_\epsilon \cap \Omega }(v_\epsilon, \varphi ) +
  Q_{\Omega_\epsilon \setminus \Omega }(v_\epsilon, {\mathcal{P}}_{\epsilon }\varphi  )\nonumber  \\
  & & = Q_{\Omega }(v_\epsilon, \varphi ) -  Q_{\Omega \setminus \Omega_{\epsilon }   }(v_\epsilon, \varphi ) 
  +  Q_{\Omega_\epsilon \cap \Omega }(v_\epsilon, {\mathcal{P}}_{\epsilon }\varphi  -\varphi )  +
    Q_{\Omega_\epsilon \setminus \Omega }(v_\epsilon, {\mathcal{P}}_{\epsilon }\varphi  )
\end{eqnarray}

By the weak convergence of $v_{\epsilon }$ to $v$ we have that 
\begin{equation}\label{limitisthesolution1}
 Q_{\Omega }(v_\epsilon, \varphi ) \to  Q_{\Omega }(v, \varphi ), \ \ {\rm as}\ \epsilon\to 0 .
\end{equation}

By the Cauchy-Schwarz inequality and Theorem~\ref{Piolamain}, (iv) we get that 
\begin{equation}\label{limitisthesolution2}
 Q_{\Omega_\epsilon \cap \Omega }(v_\epsilon, {\mathcal{P}}_{\epsilon }\varphi  -\varphi ) \le ( Q_{\Omega_\epsilon \cap \Omega }(v_\epsilon )  )^{1/2}(Q_{\Omega_\epsilon \cap \Omega }( {\mathcal{P}}_{\epsilon }\varphi  -\varphi ) )^{1/2}\to 0,\ \ {\rm as}\ \epsilon \to 0
\end{equation}

Similarly, 
\begin{equation}\label{limitisthesolution3}
 Q_{\Omega \setminus \Omega_{\epsilon }   }(v_\epsilon, \varphi ) \le  (Q_{\Omega \setminus \Omega_{\epsilon }  }(v_\epsilon)  )^{1/2}
  (Q_{\Omega \setminus \Omega_{\epsilon }   }( \varphi ) )^{1/2}\to 0 \ \ {\rm as}\ \epsilon\to 0 .
\end{equation}

Finally, 
\begin{equation}\label{limitisthesolution4}
 Q_{\Omega_\epsilon \setminus \Omega }(v_\epsilon, {\mathcal{P}}_{\epsilon }\varphi  )\le 
( Q_{\Omega_\epsilon \setminus \Omega }(v_\epsilon))^{1/2}
 (Q_{\Omega_\epsilon \setminus \Omega }({\mathcal{P}}_{\epsilon }\varphi  ))^{1/2} \to 0  \ \ {\rm as}\ \epsilon\to 0 
\end{equation}
since by \eqref{tildephigoesto0}, \eqref{curltildephigoesto0} and \eqref{divtildephigoesto0}  it follows that $Q_{\Omega_\epsilon \setminus \Omega }({\mathcal{P}}_{\epsilon }\varphi  )\to 0$ as $\epsilon \to 0$.
By combining  \eqref{limitisthesolution1}-\eqref{limitisthesolution4}, we deduce that limit \eqref{limitQeps} holds.

In conclusion, by using the limiting relations  \eqref{limitfeps} and \eqref{limitQeps} in  equation \eqref{eqpiola} we conclude that 
$$
Q_{\Omega}(v,\varphi )= \int_{\Omega }f\cdot \varphi
$$
which means that 
$v$ is the solution in $\xn(\Omega)$ of the given problem  with datum $f\in L^2(\Omega)$, as required. 

\end{proof}

\begin{remark} A careful inspection of the proof of Lemma~\ref{limitisthesolution} reveals that the uniform Gaffney inequality has been used only to prove the limiting relations \eqref{limitisthesolution1}-\eqref{limitisthesolution3} since the functions $v_{\epsilon}$ are required here to be defined on $\Omega$ and to have uniformly bounded norms. This problem does not occur if 
$\Omega \subset \Omega_{\epsilon}$ in which case only the Gaffney inequality in $\Omega$ is necessary. However, the uniform Gaffney inequality will be 
used in an essential way in the following statements also in the particular case $\Omega \subset \Omega_{\epsilon}$
\end{remark}

In the next lemma we prove that  $S_\epsilon $ $E$-converges to $S_0$ as $\epsilon \to 0$.

\begin{lemma}\label{lemma:EE:convergence}
 Assume that  condition \eqref{assumptions} and  the uniform Gaffney inequality \eqref{unigaff} hold. 
 Let $f_\epsilon \in L^2(\Omega_\epsilon)^3, \epsilon >0$ be such that $f_\epsilon \xrightarrow[\epsilon \to 0]{E} f \in L^2(\Omega)^3$  for some function $f \in L^2(\Omega)^3$. Set $v_\epsilon := S_\epsilon f_\epsilon$ and $v:=S_0 f$. Then $v_\epsilon \xrightarrow[\epsilon \to 0]{E} v$, hence $S_\epsilon \xrightarrow[\epsilon \to 0]{EE}S_0.$
\end{lemma}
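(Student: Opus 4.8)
The plan is to combine the a priori bounds already established with the identification of weak limits provided by Lemma~\ref{limitisthesolution}, and then to upgrade weak convergence to strong $L^2$-convergence in the $E$-sense by a compactness argument that rests in an essential way on the uniform Gaffney inequality.

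First I would unwind the hypothesis $f_\epsilon \xrightarrow{E} f$. Splitting $\|f_\epsilon - E_\epsilon f\|^2_{L^2(\Omega_\epsilon)^3}$ over $\Omega_\epsilon \cap \Omega$ and $\Omega_\epsilon \setminus \Omega$, and recalling that $|\Omega \setminus \Omega_\epsilon| + |\Omega_\epsilon \setminus \Omega| \to 0$ because the profile functions converge uniformly, one gets $\sup_{\epsilon>0}\|f_\epsilon\|_{L^2(\Omega_\epsilon)^3}<\infty$ and that the extension-by-zero $f_\epsilon^0$ converges strongly, a fortiori weakly, to $f$ in $L^2(\Omega)^3$. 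Then \eqref{normveps} gives $\|v_\epsilon\|_{\xn(\Omega_\epsilon)}=O(1)$, and since the $\Omega_\epsilon$ lie in one Lipschitz class $C^{0,1}_M(\mathcal{A})$, the uniform Gaffney inequality \eqref{unigaff} together with uniformly bounded extension operators for that class (see \cite{bur}) yields \eqref{globalass}: the $v_\epsilon$ may be regarded as elements of $H^1(\R^3)^3$ with $\sup_{\epsilon>0}\|v_\epsilon\|_{H^1(\R^3)^3}<\infty$. Fix once and for all an open ball $B$ containing $\overline\Omega$ and all $\overline{\Omega_\epsilon}$ for $\epsilon$ small.

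The core is then a subsequence principle. Let $\epsilon_n\to 0$ be arbitrary. By the uniform $H^1(\R^3)$-bound and the compact embedding $H^1(B)\hookrightarrow\hookrightarrow L^2(B)$, after passing to a subsequence (not relabelled) we have $v_{\epsilon_n}\to w$ strongly in $L^2(B)^3$ and $v_{\epsilon_n}\rightharpoonup w$ weakly in $H^1(B)^3$ for some $w\in H^1(B)^3$; in particular $v_{\epsilon_n}|_\Omega\rightharpoonup w|_\Omega$ weakly in $H(\cu,\Omega)\cap H(\di,\Omega)$. Since all the hypotheses of Lemma~\ref{limitisthesolution} are met --- using the weak convergence of $f_{\epsilon_n}^0$ to $f$ just recorded --- we get $w|_\Omega=S_0 f=v$. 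Using $w=v$ on $\Omega\supseteq\Omega_{\epsilon_n}\cap\Omega$ and $\Omega_{\epsilon_n}\setminus\Omega\subseteq B$, we estimate
\begin{align*}
\|v_{\epsilon_n}-E_{\epsilon_n}v\|^2_{L^2(\Omega_{\epsilon_n})^3}
&= \|v_{\epsilon_n}-v\|^2_{L^2(\Omega_{\epsilon_n}\cap\Omega)^3}+\|v_{\epsilon_n}\|^2_{L^2(\Omega_{\epsilon_n}\setminus\Omega)^3}\\
&\le \|v_{\epsilon_n}-w\|^2_{L^2(B)^3}+\big(\|v_{\epsilon_n}-w\|_{L^2(B)^3}+\|w\|_{L^2(\Omega_{\epsilon_n}\setminus\Omega)^3}\big)^2.
\end{align*}
The first two summands tend to $0$ by strong $L^2(B)$-convergence, and $\|w\|_{L^2(\Omega_{\epsilon_n}\setminus\Omega)^3}\to 0$ by absolute continuity of the integral, since $w\in L^2(B)^3$ and $|\Omega_{\epsilon_n}\setminus\Omega|\to 0$. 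Hence $\|v_{\epsilon_n}-E_{\epsilon_n}v\|_{L^2(\Omega_{\epsilon_n})^3}\to 0$ along this subsequence. As $\epsilon_n\to 0$ was arbitrary and every subsequence admits a further subsequence along which this quantity tends to $0$, it follows that $\|v_\epsilon-E_\epsilon v\|_{L^2(\Omega_\epsilon)^3}\to 0$ as $\epsilon\to 0$, i.e.\ $v_\epsilon\xrightarrow{E}v$; since the $E$-convergent family $f_\epsilon$ was arbitrary, $S_\epsilon\xrightarrow{EE}S_0$.

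The step I expect to be the genuine obstacle --- and the only place where the uniform Gaffney inequality is truly indispensable --- is securing the uniform $H^1$-bound on $v_\epsilon$, hence the compactness that converts weak $H(\cu)\cap H(\di)$-convergence into strong $L^2$-convergence; without it one would have only boundedness in $\xn(\Omega_\epsilon)$, for which there is no uniform compact embedding into $L^2$ as $\epsilon$ varies. The remaining points --- controlling the collar $\Omega_\epsilon\setminus\Omega$, and passing between the whole family and subsequences --- are routine once $B$ and the uniform $H^1(\R^3)$-bound are in hand.
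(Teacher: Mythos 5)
Your proposal is correct and follows essentially the same route as the paper: the a priori bound \eqref{normveps}, the uniform Gaffney inequality to get a uniform $H^1$ bound, compactness of $H^1\hookrightarrow L^2$, identification of the limit via Lemma~\ref{limitisthesolution}, and the subsequence principle. The only (harmless) variation is in controlling the collar $\Omega_\epsilon\setminus\Omega$: the paper proves $\norm{v_\epsilon}_{L^2(\Omega_\epsilon\setminus\Omega)}\to 0$ for the whole family by an explicit one-dimensional Sobolev estimate on the thin layer (as in \eqref{estimateAbis}), whereas you obtain it along subsequences from strong $L^2(B)$-convergence plus absolute continuity of the integral, which suffices.
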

\begin{proof}
Since  $f_\epsilon \xrightarrow[\epsilon \to 0]{E} f$, then  $\norm{f_\epsilon}_{L^2(\Omega_\epsilon)^3} \leq C$  for all $\epsilon>0$  sufficiently small and consequently $\norm{v_\epsilon}_{\xn(\Omega_\epsilon)}$ is uniformly bounded with respect to $\epsilon$,  as shown in \eqref{normveps}.
By the uniform Gaffney inequality it follows that  also  $\norm{v_\epsilon}_{H^1(\Omega_\epsilon)^3} $ is uniformly bounded. 
 In particular
$$\lim_{\epsilon \to 0} \norm{v_\epsilon}_{L^2(\Omega_\epsilon \setminus \Omega)^3} =0$$
because $\abs{\Omega_\epsilon \setminus \Omega}\to 0$ as $\epsilon$ goes to 0. This can be proved using the same argument used for \eqref{estimateAbis}  as follows:
\begin{equation*}
\begin{split}
& \quad  \int_{(\Omega_{\epsilon} \cap V_j) \setminus r^{-1}_j(K_{\epsilon,j})} \abs{v_{\epsilon}(x)   }^2 dx 
=  \int_{r_j(\Omega_{\epsilon} \cap V_j) \setminus K_{\epsilon,j}} \abs{  v_{\epsilon}\circ r_j^{-1} (z)    }^2 dz \\
& \quad =  \int_{W_j} \left( \int_   {{\hat g}_{\epsilon,j}(\bar{z})}    ^{{g}_{\epsilon,j}(\bar{z})}        \abs{   v_{\epsilon}\circ r_j^{-1}(\bar{z},z_3)     }^2 dz_3 \right) d\bar{z} \\
&\quad \leq  \int_{W_j} \abs    {   {g}_{\epsilon,j}(\bar{z}) - {\hat g}_{\epsilon,j}(\bar{z})      } \norm{   v_{\epsilon}\circ r_j^{-1}   (   \bar{z}, \cdot)}^2_{L^\infty(a_{3j}, g_{\epsilon , j}(\bar{z}))^3} d\bar{z} \\
&\quad \leq \norm{g_{\epsilon ,j }- {\hat g}_{\epsilon,j}}_{L^\infty(W_j)} \int_{W_j} \norm{  v_{\epsilon}\circ r_j^{-1}    (\bar{z}, \cdot)}^2_{H^1(a_{3j}, g_{\epsilon , j}(\bar{z}))^3} d\bar{z} \\
&\quad \leq  \kappa_\epsilon \norm{ v_{\epsilon}\circ r_j^{-1}  }_{H^1(r_j(\Omega_{\epsilon } \cap V_j))^3}^2 \xrightarrow[\epsilon \to 0]{} 0.
\end{split}
\end{equation*}
Hence to prove that  $v_\epsilon \xrightarrow[\epsilon \to 0]{E} v$  we just have to show that 
\begin{equation} \label{Econvergenceofveps}
\lim_{\epsilon \to 0}\norm{v_\epsilon\rvert_\Omega - v}_{L^2(\Omega)^3}=0.
\end{equation}
Recall that   $\{v_\epsilon\rvert_\Omega \}\subset H^1(\Omega)^3$ is  bounded in $H^1$-norm. Select now a sequence $\{v_{\epsilon_n}\}_{n \in \mathbb{N}}$ from the family. By the compact embedding of $H^1(\Omega)^3$ into $L^2(\Omega)^3$ we have that, up to choosing a subsequence, $v_{\epsilon_n}\rvert_\Omega \to v^*$ strongly in $L^2(\Omega)^3$ and $v_{\epsilon_n}\rvert_\Omega \rightharpoonup v^*$ weakly in $H^1(\Omega)^3$ for some $v^* \in H^1(\Omega)^3$. By \Cref{limitisthesolution} we have  that $v^*=S_0 f = v \in \xn(\Omega)$. This shows that for any extracted sequence of the family $\{v_\epsilon\rvert_\Omega - v\}_{\epsilon>0}$, there exist a subsequence such that $\norm{v_{\epsilon_{n_k}}\rvert_\Omega - v}_{L^2(\Omega)^3} \xrightarrow[k \to \infty]{}0$. Thus we can conclude that $\norm{v_\epsilon\rvert_\Omega - v}_{L^2(\Omega)^3} \xrightarrow[\epsilon \to 0]{}0$, which is exactly  \eqref{Econvergenceofveps}.
\end{proof}
\begin{remark}\label{lemma:EE:remark}
The hypothesis of \Cref{lemma:EE:convergence} can be weakened to only require that  $\norm{f_\epsilon}_{L^2(\Omega_\epsilon)^3}$ are uniformly bounded and that the extenstion-by-zero of $f_\epsilon$ (restricted to $\Omega$) weakly converges to $f$ in $L^2(\Omega)^3$ as $\epsilon$ goes to 0, which is a weaker assumption than $f_\epsilon \xrightarrow[\epsilon \to 0]{E}f$.
\end{remark}

Finally we can state and prove the main theorem
\begin{theorem} \label{principale}
Let $\mathcal{A}$ be an atlas in $\mathbb{R}^3$ and $\{\Omega_\epsilon\}_{\epsilon>0}$ be a family of bounded domains of class $C^{1,1}(\mathcal{A})$ converging to a bounded domain $\Omega$ of class $C^{1,1}(\mathcal{A})$ as $\epsilon\to 0$, in the sense that  condition \eqref{assumptions} holds. Suppose that the uniform Gaffney inequality \eqref{unigaff}  holds.
Then $S_\epsilon \xrightarrow[]{C}S_0$  as $\epsilon \to 0$. In particular, spectral stability occurs:  the eigenvalues of the operator $S_{\epsilon}$ converge to the eigenvalues of the operator $S_0$, and   the eigenfunctions of the operator $S_{\epsilon}$ $E$-converge to the eigenfunctions of the operator $S_{0}$ as $\epsilon \to 0$. 
\end{theorem}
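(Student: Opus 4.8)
The plan is to reduce everything to the abstract convergence theorem of Vainikko recalled in \Cref{vainikko}: once we have established $S_\epsilon \xrightarrow{C} S_0$, the convergence of the eigenvalues and the $E$-convergence of the eigenfunctions follow at once. Thus the proof consists in verifying the two requirements in the definition of $\xrightarrow{C}$, namely $S_\epsilon \xrightarrow{EE} S_0$ and the $E$-compactness property. The first is precisely \Cref{lemma:EE:convergence}, so it remains only to establish the compactness property.

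To this end I would fix an arbitrary family $f_\epsilon \in L^2(\Omega_\epsilon)^3$ with $\|f_\epsilon\|_{L^2(\Omega_\epsilon)^3}=1$ and an arbitrary sequence $\epsilon_n \to 0$, and set $v_n := S_{\epsilon_n} f_{\epsilon_n}$. By the energy estimate \eqref{normveps} the norms $\|v_n\|_{\xn(\Omega_{\epsilon_n})}$ are uniformly bounded, and the uniform Gaffney inequality \eqref{unigaff} promotes this to a uniform bound for $\|v_n\|_{H^1(\Omega_{\epsilon_n})^3}$. Since for $\epsilon$ small all the $\Omega_\epsilon$ lie in a single Lipschitz class $C^{0,1}_M(\mathcal{A})$, there is a uniformly bounded family of extension operators (as in the discussion preceding \eqref{globalass}), so $\{v_n\}$ may be viewed as a bounded sequence in $H^1(\mathbb{R}^3)^3$ with supports in a fixed ball. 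By the Rellich--Kondrachov theorem, after passing to a subsequence (not relabelled) we obtain $v_n \to u$ strongly in $L^2$ and weakly in $H^1$ on that ball, and in particular $v_n\rvert_\Omega \to u\rvert_\Omega$ strongly in $L^2(\Omega)^3$ and weakly in $H(\operatorname{curl},\Omega)\cap H(\operatorname{div},\Omega)$.

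Passing to a further subsequence we may also assume $f_{\epsilon_n}^0 \rightharpoonup f$ weakly in $L^2(\Omega)^3$ for some $f$ (the extensions by zero being bounded by $1$); then the hypotheses of \Cref{limitisthesolution} are met and the limit is identified as $u\rvert_\Omega = S_0 f \in \xn(\Omega)$, consistently with $S_\epsilon \xrightarrow{EE} S_0$. Finally, to upgrade the strong $L^2(\Omega)$-convergence to $E$-convergence, I would split
$$
\|v_n - E_{\epsilon_n}(u\rvert_\Omega)\|_{L^2(\Omega_{\epsilon_n})^3}^2 = \|v_n - u\|_{L^2(\Omega_{\epsilon_n}\cap\Omega)^3}^2 + \|v_n\|_{L^2(\Omega_{\epsilon_n}\setminus\Omega)^3}^2 .
$$
The first term vanishes because $\Omega_{\epsilon_n}\cap\Omega\subset\Omega$ and $v_n\to u$ in $L^2(\Omega)^3$; the second vanishes by the boundary-layer estimate already carried out inside the proof of \Cref{lemma:EE:convergence}, which only uses the uniform $H^1$-bound, the one-dimensional Sobolev embedding, and $|\Omega_{\epsilon_n}\setminus\Omega|\to 0$. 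Hence $S_{\epsilon_n} f_{\epsilon_n} \xrightarrow{E} u$, which is exactly the required property, so $S_\epsilon \xrightarrow{C} S_0$ and \Cref{vainikko} concludes the proof.

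I do not expect a genuinely hard step here, since the substantive analysis has already been done: the Atlas Piola transform (\Cref{Piolamain}), the passage to the limit in the weak formulation (\Cref{limitisthesolution}), and, decisively, the availability of the uniform Gaffney inequality. The only points needing a little care are that the $f_\epsilon$ in the definition of $\xrightarrow{C}$ are merely bounded, not $E$-convergent, so one must first extract a weakly $L^2$-convergent subsequence of their extensions by zero before invoking \Cref{limitisthesolution}; and that genuine $E$-convergence (not just weak $L^2$-convergence of restrictions) requires both the compact embedding on the fixed domain $\Omega$ and the vanishing of the mass carried in the shrinking collar $\Omega_\epsilon\setminus\Omega$.
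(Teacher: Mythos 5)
Your proposal is correct and follows essentially the same route as the paper: $EE$-convergence is delegated to \Cref{lemma:EE:convergence}, the compactness property is obtained by extracting a weakly $L^2$-convergent subsequence of the zero-extended data and identifying the limit via \Cref{limitisthesolution} (the paper invokes \Cref{lemma:EE:convergence} together with Remark~\ref{lemma:EE:remark} for this, which packages the uniform $H^1$ bound, the compact embedding, and the boundary-layer estimate you spell out), and the conclusion follows from Theorem~\ref{vainikko}. You also correctly isolate the one subtle point, namely that the data in the compactness definition are merely bounded rather than $E$-convergent.
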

\begin{proof}
By \Cref{lemma:EE:convergence} we have that $S_\epsilon \xrightarrow[\epsilon \to 0]{EE}S$. 
Now, suppose that we are given a family of data $\{f_\epsilon\}_{\epsilon >0}$ such that $\norm{f_\epsilon}_{L^2(\Omega_\epsilon)^3} \leq 1$ for all $\epsilon>0$, and extract a sequence $\{f_{\epsilon_n}\}_{n \in \mathbb{N}}$ from it. We have to show that we can always find a subsequence $\epsilon_{n_k}\to 0$ and a function $v \in L^2(\Omega)^3$ such that 
\begin{equation} \label{subseq:E:converges}
S_{\epsilon_{n_k}} f_{\epsilon_{n_k}} \xrightarrow[k \to \infty]{E}v.
\end{equation} 
Possibly passing to a subsequence, we can find a function $f$ to which the restriction to $\Omega$ of the extension-by-zero of $\{f_{\epsilon_n}\}_{n \in \mathbb{N}}$ weakly converge in $L^2(\Omega)^3$. Setting $v:= S_0f$, we can  apply \Cref{lemma:EE:convergence} and Remark~\ref{lemma:EE:remark} to find out that \eqref{subseq:E:converges} holds.

Finally, the spectral stability  is a consequence of the compact convergence of compact operators as stated in Theorem~\ref{vainikko}. 
\end{proof}

\section{Uniform Gaffney Inequalities and applications to fa\-mi\-lies of oscillating boundaries}\label{unifsec}

In this section we give sufficient conditions  that guarantee the validity of a uniform Gaffney inequality of the type \eqref{unigaff} for a family of Lipschitz domains $\Omega_{\epsilon}$, $\epsilon >0$, belonging to the same class $C^{0,1}_M({\mathcal{A}})$.  To do so, we exploit a known relation between Gaffney inequalities and a priori estimates for the Dirichlet Laplacian that we formulate in our setting. We note that one of the two implications (namely, the validity of the Gaffney inequality implies the validity of the a priori estimate) is quite standard. The other one is a bit more involved, hence,  for the sake of completeness, we include a proof.

\begin{theorem}\label{mainequi} Let $\Omega$ be a bounded domain in ${\mathbb{R}}^3$ of class $C^{0,1}_M({\mathcal{A}})$. Then the Gaffney inequality \eqref{gaff}
 holds for all $u \in \xn(\Omega)$ and  a constant $C>0$  independent of $u $ if and only if (the weak, variational) solutions $\varphi \in H^1_0(\Omega)$ to the Poisson problem
\begin{equation}\label{mainequi0}\left\{ 
\begin{array}{ll}
-\Delta \varphi =f,& \ {\rm in}\ \Omega,\\
\varphi =0,&\ {\rm on }\ \partial \Omega ,
\end{array}\right.
\end{equation}
satisfy the a priori estimate 
\begin{equation}\label{apriori}
\| \varphi \|_{H^2(\Omega)}\le \tilde C \| f\|_{L^2(\Omega)}
\end{equation}
for all $f\in L^2(\Omega)$ and a constant $\tilde C>0$ independent of $f$.  In particular, the constants $C$ and $\tilde{C}$ depend on each other, $M$ and ${\mathcal{A}}$. 
\end{theorem}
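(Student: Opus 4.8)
The plan is to prove the two implications separately, following the paper's remark that ``\eqref{gaff} $\Rightarrow$ \eqref{apriori}'' is routine while the converse needs more care. For the routine direction, given $f\in L^2(\Omega)$ and the weak solution $\varphi\in H^1_0(\Omega)$ of \eqref{mainequi0}, I would apply the Gaffney inequality \eqref{gaff} to the field $u:=\nabla\varphi$. Approximating $\varphi$ by $C^\infty_c(\Omega)$-functions shows $u\in H_0(\cu,\Omega)$ with $\cu u=0$, while $\di u=\Delta\varphi=-f\in L^2(\Omega)$, so $u\in\xn(\Omega)$ and \eqref{gaff} gives $\|\nabla\varphi\|_{H^1(\Omega)^3}\le C\big(\|\nabla\varphi\|_{L^2(\Omega)^3}^2+\|f\|_{L^2(\Omega)}^2\big)^{1/2}$. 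Testing the weak form of \eqref{mainequi0} with $\varphi$ together with a Poincar\'e inequality on $\Omega$ (whose constant is controlled by $\mathcal{A}$, since every domain of the class lies in a fixed bounded set) bounds $\|\varphi\|_{L^2(\Omega)}+\|\nabla\varphi\|_{L^2(\Omega)^3}$ by $c\|f\|_{L^2(\Omega)}$; adding the two estimates yields \eqref{apriori} with $\tilde C$ depending on $C$, $M$ and $\mathcal{A}$.

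For the converse I would use a Helmholtz-type splitting. Given $u\in\xn(\Omega)$, let $\varphi\in H^1_0(\Omega)$ solve \eqref{mainequi0} with $f=-\di u$; then \eqref{apriori} gives $\|\nabla\varphi\|_{H^1(\Omega)^3}\le\|\varphi\|_{H^2(\Omega)}\le\tilde C\|\di u\|_{L^2(\Omega)}$, and by Poincar\'e $\|\nabla\varphi\|_{L^2(\Omega)^3}\le c\|\di u\|_{L^2(\Omega)}$. The remainder $w:=u-\nabla\varphi$ satisfies $\di w=0$, $\cu w=\cu u$ and $w\in H_0(\cu,\Omega)$ (because $\nabla\varphi\in H_0(\cu,\Omega)$), so it remains to bound $\|w\|_{H^1(\Omega)^3}$ by $c\big(\|\cu u\|_{L^2(\Omega)^3}+\|w\|_{L^2(\Omega)^3}\big)$, and here the Dirichlet Laplacian of $\Omega$ re-enters. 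Set $j:=\cu w=\cu u\in L^2(\Omega)^3$; then $\di j=0$, and since the tangential trace of $w$ vanishes, the normal trace of $j$ on $\partial\Omega$ (which equals a first-order surface differential operator applied to that tangential trace) is zero, so the extension-by-zero $j^0$ is divergence free in $\R^3$. Letting $\Psi_0:=\cu(\mathcal{N}\ast j^0)$ with $\mathcal{N}$ the Newtonian kernel, one has $\di\Psi_0=0$, $\cu\Psi_0=\nabla\di(\mathcal{N}\ast j^0)-\Delta(\mathcal{N}\ast j^0)=j^0$ (using $\di(\mathcal{N}\ast j^0)=\mathcal{N}\ast\di j^0=0$), and, by Calder\'on--Zygmund estimates and the compact support of $j^0$, $\|\Psi_0\|_{H^1(\Omega)^3}\le c\|j\|_{L^2(\Omega)^3}$ with $c$ absolute. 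Then $\cu(w-\Psi_0)=0$ in $\Omega$, hence $w-\Psi_0=\nabla q$ for some $q\in H^1(\Omega)$ (for simplicity assume $\Omega$ simply connected; otherwise one adds finitely many cohomology fields, which form a fixed finite-dimensional space and are harmless), and $\Delta q=\di w-\di\Psi_0=0$, so $q$ is harmonic; moreover the vanishing tangential trace of $w$ forces the surface gradient of $q|_{\partial\Omega}$ to equal, up to sign, the tangential trace of $\Psi_0$, which lies in $H^{1/2}(\partial\Omega)$, whence $q|_{\partial\Omega}\in H^{3/2}(\partial\Omega)$ with norm $\le c\big(\|w\|_{L^2(\Omega)^3}+\|j\|_{L^2(\Omega)^3}\big)$. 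A standard trace lifting reduces the inhomogeneous Dirichlet problem solved by $q$ to \eqref{apriori} and gives $q\in H^2(\Omega)$ with $\|q\|_{H^2(\Omega)}\le c\,\tilde C\big(\|w\|_{L^2(\Omega)^3}+\|\cu u\|_{L^2(\Omega)^3}\big)$, so that $w=\Psi_0+\nabla q\in H^1(\Omega)^3$; combined with the bound for $\nabla\varphi$ and $\|w\|_{L^2}\le\|u\|_{L^2}+\|\nabla\varphi\|_{L^2}$ this yields \eqref{gaff} with $C=C(\tilde C,M,\mathcal{A})$.

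The main obstacle is the converse implication and, within it, the treatment of the divergence-free part $w$: one must build an $H^1$-bounded vector potential of $\cu u$ (done globally here via the Biot--Savart kernel, which carries an absolute constant) and then realize the curl-free remainder as the gradient of a harmonic function whose Dirichlet datum is exactly one derivative smoother than that of a generic harmonic function, which is precisely what makes \eqref{apriori} applicable. The delicate point is the bookkeeping: one must verify that every auxiliary constant --- the Poincar\'e constant, the trace and lifting constants for $\partial\Omega$, and the Calder\'on--Zygmund constant --- depends only on $M$ and $\mathcal{A}$, so that the uniformity of $\tilde C$ supplied later by the Maz'ya--Shaposhnikova theory transfers to $C$; the finitely many harmonic/cohomology fields that occur when $\Omega$ or $\partial\Omega$ is not simply connected are smooth up to the boundary and contribute only controlled lower-order terms.
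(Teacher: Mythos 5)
Your first implication (Gaffney $\Rightarrow$ a priori estimate) coincides with the paper's: one applies \eqref{gaff} to $u=\nabla\varphi\in\xn(\Omega)$, notes $\cu\nabla\varphi=0$ and $\di\nabla\varphi=-f$, and combines with Poincar\'e. The converse is where you depart from the paper and where there is a genuine gap. The paper does not build the regular/gradient splitting by hand: it invokes the Birman--Solomyak decomposition \cite[Thm.~4.1]{birsol}, valid on Lipschitz domains, which supplies operators $P:\xn(\Omega)\to \xn(\Omega)\cap H^1(\Omega)^3$ and $Q:\xn(\Omega)\to\{\nabla\varphi:\ \varphi\in H^1_0(\Omega),\ \Delta\varphi\in L^2(\Omega)\}$ with $u=Pu+Qu$ and $\|Pu\|_{H^1(\Omega)^3}+\|Qu\|_{L^2(\Omega)^3}+\|\di Qu\|_{L^2(\Omega)}\le C_{BS}\|u\|_{\xn(\Omega)}$; the hypothesis \eqref{apriori} is then needed only to upgrade $Qu=\nabla\varphi$ to $H^1$. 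All of the delicate boundary analysis is delegated to that cited theorem.

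Your replacement for it does not close. The critical step is the curl-free remainder $w-\Psi_0=\nabla q$: you pass from ``the surface gradient of $q|_{\partial\Omega}$ is the tangential trace of an $H^1(\Omega)^3$ field, hence lies in $H^{1/2}(\partial\Omega)$'' to ``$q|_{\partial\Omega}\in H^{3/2}(\partial\Omega)$'' and then to ``a standard trace lifting reduces to \eqref{apriori}''. The theorem is stated for domains of class $C^{0,1}_M({\mathcal{A}})$, i.e.\ merely Lipschitz, and on such boundaries neither step is standard: $H^{3/2}(\partial\Omega)$ is not an invariantly defined space, and the trace operator on $H^2(\Omega)$ does not admit a bounded right inverse from any such space (its range is a strict, compatibility-constrained subspace). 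What \eqref{apriori} gives you is an isomorphism $-\Delta:H^2(\Omega)\cap H^1_0(\Omega)\to L^2(\Omega)$, which handles inhomogeneous Dirichlet data that are \emph{already} traces of $H^2$ functions; it does not produce the $H^2$ lifting of $q|_{\partial\Omega}$ that your argument requires, and producing it is essentially equivalent to the hard part of the statement --- precisely what Birman--Solomyak supplies. A second, smaller gap: when $\Omega$ is not simply connected the cohomology fields you add are not ``smooth up to the boundary'' on a Lipschitz domain, and their membership in $H^1(\Omega)^3$ is itself an instance of the inequality being proved (for the Neumann-type harmonic fields it would need an $H^2$ estimate for the Neumann Laplacian, which is not among the hypotheses), so they cannot be dismissed as harmless. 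The Biot--Savart construction of $\Psi_0$ and the bound $\|\Psi_0\|_{H^1(\Omega)^3}\le c\|\cu u\|_{L^2(\Omega)^3}$ are, by contrast, fine.
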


\begin{proof}
Assume that the a priori estimate \eqref{apriori} holds.  We set $H^1_{\rm \scriptscriptstyle N}(\Omega):=\xn(\Omega)\cap H^1(\Omega)^3$ and  $E(\Omega)=\{ \nabla \varphi:\ \varphi \in H^1_0(\Omega),\ \Delta \varphi \in L^2(\Omega )\} $. By  \cite[Thm.~4.1]{birsol} there exists two linear operators $P:\xn(\Omega)\to H^1_{\rm \scriptscriptstyle N}(\Omega)$ and
$Q  :\xn(\Omega)\to E(\Omega)$ such that $u=Pu+Qu$ for all $u\in \xn(\Omega)$ and such that 
$$
\| Pu \|_{H^1(\Omega)^3}+\| Qu  \|_{L^2(\Omega)^3}+\| \operatorname{div}  Qu\|_{L^2(\Omega)^3}\le C_{BS} \| u\|_{\xn(\Omega)}
$$
for all $u\in \xn(\Omega)$, for some positive constant $C_{BS}$. A careful inspection of the proof of \cite[Thm.~4.1]{birsol} reveals that $C_{BS}$ depends only on 
$M$, ${\mathcal{A}}$. By definition, $Qu=\nabla \varphi $ with $\varphi\in H^1_0(\Omega)$ and $\Delta \varphi\in L^2(\Omega)$. Since we have assumed that \eqref{apriori} holds, then 
$$
\| \varphi\|_{H^2(\Omega)} \le \tilde C\|\Delta\varphi \|_{L^2(\Omega)} =\tilde C  \| \operatorname{div}  Qu  \|_{L^2(\Omega )}\le \tilde C C_{BS}\| u\|_{\xn(\Omega)}
$$
Thus, since $ \| Qu \|_{H^1(\Omega)^3}$ is obviously controlled by $\| \varphi\|_{H^2(\Omega)}$ we deduce that 
$$
\| u    \|_{H^1(\Omega)^3}\le \| Pu\|_{H^1(\Omega)^3}+ \| Qu \|_{H^1(\Omega)^3} \le C\| u\|_{\xn(\Omega)},
$$
for all $u\in \xn(\Omega)$, and \eqref{gaff} is proved. 

Viceversa, assume that \eqref{gaff} holds  and let $\varphi $ be a solution to \eqref{mainequi0}. Since $\nabla \varphi \in \xn(\Omega)$,  by \eqref{gaff}
it follows that 
\begin{equation*}
\begin{split}
\| \nabla \varphi \|_{H^1(\Omega)^3} &\le C( \|\nabla \varphi \|_{L^2(\Omega)^3} + \| \operatorname{curl} \nabla \varphi \|_{L^2(\Omega)^3} +  \|  \operatorname{div} \nabla \varphi \|_{L^2(\Omega)^3} ) \\
& = C ( \| \nabla \varphi \|_{L^2(\Omega)^3} + \|\Delta \varphi \|_{L^2(\Omega)}) \\
& \leq C (c_{\mathcal{P}} \|\Delta \varphi \|_{L^2(\Omega)} +\|\Delta \varphi \|_{L^2(\Omega)}) \\
& \leq C (c_{\mathcal{P}}  +1) \| f\|_{L^2(\Omega)},
\end{split}
\end{equation*}
where we have used \cite[Lemma 1]{ferlam} and  $c_{\mathcal{P}}$ denotes the usual Poincar\'{e} constant.
This, combined with the  Poincar\'{e}'s inequality,  immediately implies \eqref{apriori}.
\end{proof}

\begin{example}\label{counter}
Let $\Omega$ be a bounded domain in $\R^N$  of class $C^1$ such that   around a boundary point (identified here with the origin) is described by the subgraph
$x_N< g(\bar x)$ 
of the $C^1$ function defined by 
$$
 g(x_1, \dots , x_{N-1})= |x_1|/\log |x_1|  
$$
It is proved in \cite[~14.6.1]{Mazya} that for this domain the a priori estimate
\eqref{apriori} does not hold.  Thus, by Theorem~\ref{mainequi} it follows that not even the  Gaffney inequality holds for this domain for $N=3$.
\end{example}

Theorem~\ref{mainequi}  highlights the importance of proving the  a priori estimate \eqref{apriori} and getting information on the constant $\tilde C$.  We do this by following the approach of
Maz'ya and Shaposhnikova~\cite{Mazya} and using the notion of domains $\Omega$ with boundaries $\partial \Omega$ of class $\mathcal{M}^{3/2}_2 (\delta )$. We re-formulate the definition in Maz'ya and Shaposhnikova~\cite[\S~14.3.1]{Mazya}  by using the atlas classes. Here we can treat the general case of domains in  $\R^N$ with $N\geq 2$.

 Note that in this section, following \cite{Mazya} we find it convenient to assume directly that the functions $g_j$ describing the boundary of $\Omega$ as in Definition~\ref{atlas} are extended to the whole of ${\mathbb{R}}^{N-1}$ and belong to the corresponding function spaces defined on ${\mathbb{R}}^{N-1}$.

\begin{defn}\label{multiplier}   Let ${\mathcal{A}}$   be an atlas in ${\mathbb{R}}^N$   and $\delta >0$. We say that a bounded domain  $\Omega$ in  ${\mathbb{R}}^N$  is of class   $\mathcal{M}^{3/2}_2 ( \delta , {\mathcal{A}}   )$ if $\Omega$ is of  class $C^{0,1}({\mathcal{A}}) $ and  the gradients $\nabla g_j$ of the functions $g_j$ describing the boundary of $\Omega$ as in Definition~\ref{atlas}
belong to the space $MW^{1/2  }_2({\mathbb{R}}^{N-1} )$ of Sobolev multipliers with
\begin{equation}\label{multiplier00}
\| \nabla g_j\|_{MW^{1/2}_2({\mathbb{R}}^{N-1} )   }\le \delta \, 
\end{equation} 
for all $j=1,\dots s'$.  We say that  a bounded domain  $\Omega$ in  ${\mathbb{R}}^N$  is of class   $\mathcal{M}^{3/2}_2 ( \delta  )$ if it is of class   $\mathcal{M}^{3/2}_2 (\delta , {\mathcal{A}}  )$  for some atlas ${\mathcal{A}} $.  
\end{defn}

Recall that  $MW^{1/2  }_2({\mathbb{R}}^{N-1} )=\{f\in W^{1/2}_{2, loc}({\mathbb{R}}^{N-1}  ):\  f\varphi \in  W^{1/2}_2({\mathbb{R}}^{N-1}  )\ {\rm for\ all}\  \varphi \in W^{1/2}_2({\mathbb{R}}^{N-1}  )\} $  and that 
$\|  f\|_{MW^{1/2}_2({\mathbb{R}}^{N-1} )   }=  \sup \{ \|   f\varphi    \|_{ W^{1/2}_2({\mathbb{R}}^{N-1}  )  }:\|   \varphi \|_{ W^{1/2}_2({\mathbb{R}}^{N-1}  )  }=1\}$, where $W^{1/2}_2({\mathbb{R}}^{N-1}  )$ denotes the standard Sobolev space with fractional order of smoothness $1/2$ and index of summability $2$ (for simplicity, in \eqref{multiplier00} we use the the same symbol for the norm of a vector field).

\begin{remark}
Note that by \cite[Thm.~4.1.1]{Mazya} there exists $c>0$ depending only on $N$ such that the functions $g_j$ in Definition~\ref{multiplier} satisfy the estimate $\| \nabla g_j \|_{L^\infty(\mathbb{R}^{N-1})} \le c \| \nabla g_j\|_{MW^{1/2}_2({\mathbb{R}}^{N-1} )   }\le c\delta $, see also \cite[Thm.~14.6.4]{Mazya}. Thus, if $\Omega$ is of class   $\mathcal{M}^{3/2}_2 (\delta ,  {\mathcal{A}}   )$  then it is also of class  $C^{0,1}_M({\mathcal{A}}) $ with $M=c\delta$. 
\end{remark}

The following theorem is a reformulation of \cite[Thm.~14.5.1]{Mazya}

\begin{theorem}\label{Mazyaapriori}   Let ${\mathcal{A}}$   be an atlas in ${\mathbb{R}}^N$.    If  $\Omega $ is a bounded domain  of class  $\mathcal{M}^{3/2}_2 (\delta , {\mathcal{A}}   )$  for some $\delta $  sufficiently small  (depending only on $N$) then  the a priori estimate \eqref{apriori} holds for some constant $\tilde C$ depending only on $N$ and ${\mathcal{A}}$.  
\end{theorem}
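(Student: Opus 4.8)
The plan is to read off the a priori estimate \eqref{apriori} from \cite[Thm.~14.5.1]{Mazya} after identifying our atlas-based class $\mathcal{M}^{3/2}_2(\delta,\mathcal{A})$ with the class used there, the only substantive point being to verify that the constant produced by that theorem can be taken to depend solely on $N$ and on the fixed atlas $\mathcal{A}$.

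First I would record that, by the Remark following Definition~\ref{multiplier} (which relies on \cite[Thm.~4.1.1]{Mazya}), a domain of class $\mathcal{M}^{3/2}_2(\delta,\mathcal{A})$ is in particular of class $C^{0,1}_M(\mathcal{A})$ with $M=c\delta$, and that its local description by graphs $g_j$ with $\|\nabla g_j\|_{MW^{1/2}_2(\mathbb{R}^{N-1})}\le\delta$ is exactly the local structure demanded in \cite[\S~14.3.1]{Mazya}; hence every $\Omega$ of class $\mathcal{M}^{3/2}_2(\delta,\mathcal{A})$ belongs to the Maz'ya--Shaposhnikova class $\mathcal{M}^{3/2}_2(\delta)$. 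Existence and uniqueness of the weak solution $\varphi\in H^1_0(\Omega)$ of \eqref{mainequi0} being a matter of Lax--Milgram, the content of the statement is the $H^2$-bound, and \cite[Thm.~14.5.1]{Mazya} applies once $\delta$ is below a threshold $\delta_0$ that the proof there shows to be dimensional, giving \eqref{apriori} with some constant $\tilde C$.

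The remaining — and main — task is to track the dependence of $\tilde C$, which I would do by inspecting the proof of \cite[Thm.~14.5.1]{Mazya}. That proof proceeds through a partition of unity subordinate to the boundary covering, a straightening of each boundary chart by the map $(\bar x,x_N)\mapsto(\bar x,x_N-g_j(\bar x))$, and the half-space a priori estimate for the transformed operator, whose divergence-form coefficients differ from those of $-\Delta$ by Sobolev multipliers of $W^{1/2}_2$ of norm $O(\delta)$ — no second derivatives of $g_j$ enter these coefficients, and the multiplier norm of $\nabla g_j$ is precisely what controls the commutator terms involving $D^2 g_j$. Every quantity appearing in this scheme — the number $s$ of charts, the sizes and overlaps of the cuboids $V_j$, the parameter $\rho$, the derivatives of the cut-off functions, and the half-space constant, which is purely dimensional once $\delta\le\delta_0$ — is either a datum of $\mathcal{A}$ or depends only on $N$; since in Definition~\ref{multiplier} the atlas is fixed and $\delta\le\delta_0$ is the only free quantity, summing the local estimates yields \eqref{apriori} with $\tilde C=\tilde C(N,\mathcal{A})$. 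The genuine obstacle is thus of a bookkeeping nature: confirming that $\delta_0$ is indeed independent of the particular functions $g_j$, and that no hidden dependence on the $g_j$ beyond the bound $\delta$ creeps into $\tilde C$ through the straightening maps or through the multiplier estimates needed to handle $D^2 g_j$. This quantitative version of the a priori estimate is exactly what, via Theorem~\ref{mainequi}, makes the uniform Gaffney inequality of Section~\ref{unifsec} available for the family $\{\Omega_\epsilon\}_{\epsilon>0}$.
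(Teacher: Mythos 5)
Your proposal takes the same route as the paper, which states Theorem~\ref{Mazyaapriori} as a direct reformulation of \cite[Thm.~14.5.1]{Mazya} and offers no further proof: the substance in both cases is the identification of the atlas-based class $\mathcal{M}^{3/2}_2(\delta,\mathcal{A})$ with the Maz'ya--Shaposhnikova class of \cite[\S~14.3.1]{Mazya} and the observation that the resulting constant depends only on $N$ and the fixed atlas. Your additional bookkeeping on how $\tilde C$ and the threshold $\delta_0$ arise from the localization and straightening in the cited proof is a reasonable elaboration of what the paper leaves implicit, and is consistent with it.
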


By  \cite[Corollaries.~14.6.1,~14.6.2]{Mazya} it is possible to prove the following theorem based on the condition (\ref{dini}) from \cite[(14.6.9)]{Mazya} . Here, given an atlas  $\mathcal{A}=(\rho,s,s',\{V_j\}_{j=1}^s, \{r_j\}^s_{j=1} )$, by a refinement of $\mathcal{A}$ we mean an atlas
of the type  $\widetilde{\mathcal{A}}=(\tilde \rho,\tilde s,\tilde s',\{\tilde V_j\}_{j=1}^{\tilde s}, \{\tilde r_j\}^{\tilde s}_{j=1} )$ where $\tilde \rho \le \rho$, $s\le \tilde s$, $s'\le \tilde s'$, $\cup_{j=1}^{\tilde s}{\tilde V_j}= \cup_{j=1}^{\ s}V_j$, $\{\tilde r_j\}^{\tilde s}_{j=1}\subset \{ r_j\}^{ s}_{j=1}$, which can be thought as an atlas constructed from $\mathcal{A}$ by replacing each cuboid $V_{j}=r_j(  W_{j}\times ]a_{N,j}, b_{N,j}[   )  $ by a finite number of cuboids of the form  $\widetilde V_{j,l}=r_j(  \widetilde W_{j,l}\times ]a_{N,j}, b_{N,j}[   )  $, $l=1,\dots m_j$, where $W_j=\cup_{l=1}^{m_j} \widetilde W_{j,l}$.

\begin{theorem} \label{localmazya}  Let ${\mathcal{A}}$   be an atlas in ${\mathbb{R}}^N$ and let $\Omega $ be a bounded domain of class   $C^{0,1}_M({\mathcal{A}}) $.
Let $\omega $ be a (non-decreasing)  modulus of continuity for the gradients $\nabla g_{j}$  of the functions $g_j$ describing the boundary of $\Omega$,  that is 
\begin{equation}\label{modulus}
|\nabla g_j(\bar x)-\nabla g_j(\bar y)|\le \omega (|\bar x-\bar y |)
\end{equation}
for all $\bar x,\bar y\in {\mathbb{R}}^{N-1}$. 
Assume that there exists $D>0$ such that the  function $\omega$ satisfies the following condition\footnote{Here only the integrability at zero really matters and one could consider integrals defined in a neigborhhood of zero.}
\begin{equation}\label{dini}
\int_{0}^{\infty }\left(\frac{\omega (t)}{t}\right)^2dt \le D\, .
\end{equation}
Then there exists $C>0$ depending only on  $N$, ${\mathcal{A}}$, $D$ such that if $M\le C\delta $ then, possibly replacing  the atlas ${\mathcal{A}}$ with  
a  refinement of ${\mathcal{A}}$,  $\Omega$ is of class $\mathcal{M}^{3/2}_2 (\delta, {\mathcal{A}} )$. 
\end{theorem}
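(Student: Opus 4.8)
The plan is to build the required refinement $\widetilde{\mathcal A}$ of $\mathcal A$ one boundary chart at a time, the point being that the $MW^{1/2}_2$-multiplier norm of $\nabla g_j$ splits into a part controlled by $\|\nabla g_j\|_{L^\infty}$ and a ``square--Dini tail'' governed by $\omega$ near $0$, and the tail can be made small by localising $g_j$ to a small cuboid. Fix $j\in\{1,\dots,s'\}$. Since \eqref{dini} forces $\omega(0^+)=0$, the gradient $\nabla g_j$ is uniformly continuous, so $g_j\in C^1(\mathbb R^{N-1})$; replacing $\omega$ by $\min\{\bar\omega,2M\}$ with $\bar\omega$ a concave majorant, we may assume in addition that $\omega$ is concave, non-decreasing and bounded by $2M$, and it still satisfies \eqref{dini}. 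The relevant consequence of \cite[Corollaries~14.6.1,~14.6.2]{Mazya} --- based on condition \eqref{dini}, i.e.\ \cite[(14.6.9)]{Mazya} --- is that for $f\in C^1(\mathbb R^{N-1})$ whose gradient has modulus of continuity $\theta$ satisfying the square--Dini condition one has
\begin{equation*}
\|\nabla f\|_{MW^{1/2}_2(\mathbb R^{N-1})}\le c_0\Bigl(\|\nabla f\|_{L^\infty(\mathbb R^{N-1})}+\Bigl(\int_0^\infty\bigl(\tfrac{\theta(t)}{t}\bigr)^2\,dt\Bigr)^{1/2}\Bigr),
\end{equation*}
with $c_0$ depending only on $N$ and $D$. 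Since a refinement does not introduce new rotations, one cannot hope to lower $\|\nabla g_j\|_{L^\infty}$ below $M$, and this is exactly why the hypothesis $M\le C\delta$ enters.

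\emph{Localisation step.} Fix $r>0$ (to be chosen below), a cube $\widetilde W\subset W_j$ of diameter $\le r$ and centre $\bar x_0$, and let $\ell(\bar x):=g_j(\bar x_0)+\nabla g_j(\bar x_0)\cdot(\bar x-\bar x_0)$ be the affine tangent map of $g_j$ at $\bar x_0$. Pick $\chi\in C^\infty_c(B_{2r}(\bar x_0))$ with $\chi\equiv1$ on $B_r(\bar x_0)$, $|\nabla\chi|\le c/r$ and $|\nabla^2\chi|\le c/r^2$, and set $G:=\chi g_j+(1-\chi)\ell$. Then $G=g_j$ on $\widetilde W$, $\nabla G$ is constant outside $B_{2r}(\bar x_0)$, and using $|g_j-\ell|\le r\,\omega(cr)$ and $|\nabla g_j-\nabla\ell|\le\omega(cr)$ on $B_{2r}(\bar x_0)$ (here it is crucial that we subtract the linear part, not merely a constant) one obtains $\|\nabla G\|_{L^\infty(\mathbb R^{N-1})}\le cM$ and, by the concavity of $\omega$, that the modulus of continuity $\theta$ of $\nabla G$ satisfies $\theta(t)\le c\,\omega(ct)$ for $t\le r$ and $\theta(t)\le c\,\omega(cr)$ for $t\ge r$. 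Hence
\begin{equation*}
\int_0^\infty\Bigl(\frac{\theta(t)}{t}\Bigr)^2\,dt\le c\int_0^{cr}\Bigl(\frac{\omega(s)}{s}\Bigr)^2\,ds=:\eta(r)^2,
\end{equation*}
and $\eta(r)\to0$ as $r\to0^+$ by the absolute continuity of the integral in \eqref{dini}.

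\emph{Conclusion.} Applying the estimate above with $f=G$ yields $\|\nabla G\|_{MW^{1/2}_2(\mathbb R^{N-1})}\le c_0(cM+\eta(r))$. Put $C:=(2c_0c)^{-1}$, which depends only on $N$, $D$ and the geometric data of $\mathcal A$. If $M\le C\delta$ then $c_0cM\le\delta/2$; choose $r=r_\delta>0$ so small that $c_0\eta(r_\delta)\le\delta/2$ and that a cube of diameter $r_\delta$ still fits comfortably inside each $W_j$. Let $\widetilde{\mathcal A}$ be the refinement of $\mathcal A$ obtained by subdividing each $W_j$ into finitely many overlapping cubes of diameter $\le r_\delta$, with a suitably small $\tilde\rho\le r_\delta$ so that all conditions of Definition~\ref{atlas} are met, keeping the rotations $r_j$; as profile function of each boundary chart take the corresponding localised map $G$ constructed above. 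Every boundary chart of $\widetilde{\mathcal A}$ then satisfies $\|\nabla G\|_{MW^{1/2}_2(\mathbb R^{N-1})}\le\delta$, and $\Omega$ remains of class $C^{0,1}(\widetilde{\mathcal A})$ since $G$ coincides with $g_j$ on the new, smaller footprint; hence $\Omega$ is of class $\mathcal M^{3/2}_2(\delta,\widetilde{\mathcal A})$.

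The principal difficulty is isolating from \cite[Corollaries~14.6.1,~14.6.2]{Mazya} the quantitative splitting stated above, with the multiplier norm bounded by the sup-norm of the gradient plus the square--Dini quantity and with explicit control of the constant. Granting this, the cut-off computations in the localisation step and the atlas bookkeeping for the refinement are routine, though the choice of the affine (rather than constant) extension is essential in order to keep the localised square--Dini tail of size $\eta(r)$.
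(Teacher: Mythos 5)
Your proposal is correct in substance and rests on the same two pillars as the paper's proof: the Maz'ya--Shaposhnikova sufficient condition driven by the square--Dini integral \eqref{dini}, and the observation that the Dini tail vanishes under localisation while the $L^\infty$-norm of the gradient does not (whence the hypothesis $M\le C\delta$). The packaging, however, is different. The paper does not construct the refinement or the cut-off extensions by hand: it verifies directly the pointwise local criterion of \cite[Cor.~14.6.1]{Mazya}, which for $N\ge 3$ requires
$$
\lim_{\rho\to 0}\Bigl(\sup_{E\subset B_\rho(\bar x)}\|D_{3/2}(g_j,B_\rho)\|_{L^2(E)}\,|E|^{-\frac{N-2}{2(N-1)}}+\|\nabla g_j\|_{L^\infty(B_\rho(\bar x))}\Bigr)\le c\delta
$$
at every boundary point, and delegates the refinement of the atlas entirely to that corollary. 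The whole analytic content of the paper's argument is then the estimate
$$
\|D_{3/2}(g_j,B_\rho)\|_{L^2(E)}^2\le \sigma_{N-2}\,|E|\int_0^\rho\Bigl(\frac{\omega(t)}{t}\Bigr)^2dt\le \sigma_{N-2}D\,|E|,
$$
which makes the first summand $O(\rho^{1/2})$ and hence eventually $\le c\delta$, leaving only the requirement $\|\nabla g_j\|_{L^\infty}\le c\delta$, i.e.\ $M\le C\delta$. This computation is precisely the ``quantitative splitting'' that you postulate and defer as the principal difficulty: a two-term bound of the multiplier norm by $\|\nabla f\|_{L^\infty}$ plus the square--Dini quantity is not stated in that form in \cite[Corollaries~14.6.1, 14.6.2]{Mazya} and has to be extracted by exactly the displayed estimate above, so the one step you grant yourself is the step that constitutes the proof. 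Your localisation step is sound and indeed more explicit than anything the paper writes (the affine tangent subtraction, the concave majorant, $\theta(t)\le c\,\omega(ct)$ for $t\le r$ and $\theta(t)\le c\,\omega(cr)$ for $t\ge r$, with $\omega(cr)^2/r$ absorbed into $\int_0^{cr}(\omega(s)/s)^2\,ds$ by concavity, all check out); what it buys is an explicit refined atlas rather than an appeal to the corollary's own localisation. Two caveats: first, you should close the deferred step by the seminorm computation above rather than by citation alone; second, your argument is silent on $N=2$, where the local criterion carries a logarithmic weight $|\log|E||^{1/2}$ in place of the power of $|E|$ and must be checked separately, as the paper does via the analogous estimate.
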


\begin{proof}
We begin with the case $N\geq 3$.   By \cite[Cor.~14.6.1]{Mazya} there exists $c>0$ depending only on $N$ such that if $x=(\bar x, g_j(\bar x))\in \partial \Omega$ is any point of the boundary represented in local charts by a profile function $g_j$  and 
 the following inequality 
\begin{equation}\label{localmazya0}
\lim_{\rho \to 0}\biggl(\sup_{E\subset B_{\rho}(\bar x) } \frac{ \| D_{3/2}(g_j , B_{\rho} ) \|_{L^2(E)}  }{   |E|^{ \frac{N-2}{2(N-1)}   } }  + \| \nabla g_j\|_{L^{\infty }(B_{\rho}(\bar x) ) }\biggr)\le c\delta 
\end{equation}
is satisfied,  then, possibly replacing the atlas ${\mathcal{A}}$ with  
a  refinement of its, $\Omega$ is of class $\mathcal{M}^{3/2}_2 (\delta , {\mathcal{A}}  )$. 
Here $|E|$ denotes the $N-1$-dimensional Lebesgue measure of $E$, 
$$
D_{3/2}(g_j , B_{\rho} )(\bar x) =\biggl(    \int_{B_{\rho} (\bar x)}    | \nabla g_j(\bar x) -\nabla g_j(\bar y )  |^2|\bar x-\bar y|^{-N}d\bar y  \biggr)^{1/2},
$$
and $B_{\rho}(\bar x)$ the  ball in ${\mathbb{R}}^{N-1}$ of radius $\rho$ and centre $\bar x$. 
 We refer to \cite[\S 14.7.2]{Mazya} for the local characterization
of the boundaries of domains of class  $\mathcal{M}^{3/2}_2 (\delta , {\mathcal{A}}  )$. 

We have
\begin{eqnarray}
\int_E\int_{B_{\rho}(\bar x)}  | \nabla g_j(\bar x) -\nabla g_j(\bar y )  |^2|\bar x-\bar y|^{-N}d\bar y d\bar x \le \int_E\int_{B_{\rho}(\bar x) } \frac{\omega ^2(|\bar x-\bar y|)}{|\bar x-\bar y|^N}d\bar y d\bar x\nonumber  \\
= \int_E\int_{B_{\rho}(0) } \frac{\omega^2 (|\bar h|)}{|\bar h|^N} d\bar hd\bar x 
= \sigma_{N-2} |E| \int_0^{\rho }\left|\frac{\omega(t)}{t}\right|^2dt\le \sigma_{N-2} D|E| \, .
\end{eqnarray}
Here  $\sigma_m$ denotes the $m$-dimensional measure of the $m$-dimensional unit sphere. Thus
$$
 \frac{ \| D_{3/2}(g_j , B_{\rho} ) \|_{L^2(E)}  }{   |E|^{ \frac{N-2}{2(N-1)}   } } \le (\sigma_{N-2} D)^{1/2}|E|^{\frac{1}{2(N-1)}}=O(\rho ^{1/2}),
$$
hence
\begin{equation}\label{localmazya1}
\frac{ \| D_{3/2}(g_j , B_{\rho} ) \|_{L^2(E)}  }{   |E|^{ \frac{N-2}{2(N-1)}   } } \le c\delta , 
\end{equation}
provided $\rho$ is sufficiently small.  Thus, inequality \eqref{localmazya0} follows if we assume directly that 
$\| \nabla g_j\|_{L^{\infty }(B_{\rho} ) }  \le c\delta$.

In the case $N=2$,  by \cite[Cor.~14.6.1]{Mazya} it suffices to replace  \eqref{localmazya0} by the following inequality
\begin{equation}\label{localmazyalog}
\lim_{\rho \to 0}\biggl(\sup_{E\subset B_{\rho}(\bar x) }  \| D_{3/2}(g_j , B_{\rho} ) \|_{L^2(E)}   |\log |E||^{1/2}   + \| \nabla g_j\|_{L^{\infty }(B_{\rho} (\bar x)  ) }\biggr)\le c\delta 
\end{equation}
and use the same argument as above.
\end{proof}

By combining Theorems~\ref{Mazyaapriori} and \ref{localmazya}, we deduce the validity of the following  result

\begin{cor} \label{aprioricorol} Under the same assumptions of Theorem~\ref{localmazya},  there exists $\tilde C>0$ depending only on  $N$, ${\mathcal{A}}$, $D$ such that if $M< \tilde C^{-1} $ then the a priori estimate \eqref{apriori} holds.
\end{cor}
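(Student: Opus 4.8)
The plan is to derive Corollary~\ref{aprioricorol} by simply composing Theorem~\ref{localmazya} with Theorem~\ref{Mazyaapriori}; no new analysis is required, and the only real task is to keep track of how the constants depend on $N$, $\mathcal{A}$ and $D$. First I would fix once and for all a value $\delta_0=\delta_0(N)>0$ that is admissible in Theorem~\ref{Mazyaapriori}, i.e.\ small enough (depending only on $N$) that every bounded domain of class $\mathcal{M}^{3/2}_2(\delta_0,\mathcal{B})$, for an arbitrary atlas $\mathcal{B}$, satisfies the a priori estimate \eqref{apriori}; denote by $\tilde C_1=\tilde C_1(N,\mathcal{B})$ the constant appearing there. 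With this $\delta_0$ now regarded as a fixed quantity, let $C=C(N,\mathcal{A},D)>0$ be the constant provided by Theorem~\ref{localmazya}, and let $\widetilde{\mathcal{A}}$ be the refinement of $\mathcal{A}$ that Theorem~\ref{localmazya} produces when applied with $\delta=\delta_0$. I would then put
\[
\tilde C:=\max\bigl\{\,(C\delta_0)^{-1},\;\tilde C_1(N,\widetilde{\mathcal{A}})\,\bigr\}.
\]
Since, for the fixed choice $\delta=\delta_0(N)$, the refinement $\widetilde{\mathcal{A}}$ built in the proof of Theorem~\ref{localmazya} is dictated solely by $N$, $\mathcal{A}$ and $D$ — the only freedom being a sufficiently small subdivision radius $\rho$, whose required smallness is controlled through \eqref{dini} by $D$ and through \eqref{localmazya0}–\eqref{localmazya1} by $N$ and $\delta_0$ — the number $\tilde C$ depends only on $N$, $\mathcal{A}$, $D$, as demanded in the statement.

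The verification is then immediate. Suppose $M<\tilde C^{-1}$. From $\tilde C\ge (C\delta_0)^{-1}$ we obtain $M<\tilde C^{-1}\le C\delta_0$, so the hypothesis $M\le C\delta$ of Theorem~\ref{localmazya} is met with $\delta=\delta_0$; hence, after replacing $\mathcal{A}$ by the refinement $\widetilde{\mathcal{A}}$, the domain $\Omega$ is of class $\mathcal{M}^{3/2}_2(\delta_0,\widetilde{\mathcal{A}})$. Because $\delta_0$ was chosen admissible for Theorem~\ref{Mazyaapriori}, that theorem yields the a priori estimate \eqref{apriori} with constant $\tilde C_1(N,\widetilde{\mathcal{A}})\le\tilde C$, and therefore \emph{a fortiori} \eqref{apriori} holds with the constant $\tilde C$ fixed above, which completes the argument.

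The step I would be most careful about — and the only one with any subtlety — is the bookkeeping of the atlas: one must check that the refinement $\widetilde{\mathcal{A}}$ delivered by Theorem~\ref{localmazya} is determined by $(N,\mathcal{A},D)$ alone (and not by $\Omega$ beyond the bounds already imposed), so that $\tilde C_1(N,\widetilde{\mathcal{A}})$, and hence the final $\tilde C$, genuinely depends only on $N$, $\mathcal{A}$, $D$. This is exactly what the quantitative bound $O(\rho^{1/2})$ in \eqref{localmazya1} guarantees, as it makes the admissible size of $\rho$ an explicit function of $N$ and $D$ only. Everything else is a one-line application of the two quoted theorems.
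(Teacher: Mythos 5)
Your proposal is correct and follows exactly the route the paper intends: the paper gives no explicit proof, stating only that the corollary follows ``by combining Theorems~\ref{Mazyaapriori} and \ref{localmazya}'', which is precisely your composition of the two results with the choice $\delta=\delta_0(N)$ and $\tilde C=\max\{(C\delta_0)^{-1},\tilde C_1\}$. Your extra care about the refinement $\widetilde{\mathcal{A}}$ being determined by $(N,\mathcal{A},D)$ alone is a legitimate point the paper glosses over, and your resolution of it is sound.
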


Finally, by Theorem~\ref{mainequi} and Corollary~\ref{aprioricorol} we deduce the following result ensuring the validity of uniform Gaffney inequality that can be used in our spectral stability results.

\begin{cor} \label{aprioricorol2} Under the same assumptions of Theorem~\ref{localmazya} with $N=3$,  there exists $ C>0$ depending only on ${\mathcal{A}}$ and $D$ such that if $M<  C^{-1} $ then  the Gaffney inequality \eqref{gaff}. 
\end{cor}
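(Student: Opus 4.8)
The plan is to obtain the statement by simply chaining Corollary~\ref{aprioricorol} with Theorem~\ref{mainequi}, so the argument is short and no new ideas are needed. First I would invoke Corollary~\ref{aprioricorol} in the case $N=3$: under the standing hypotheses of Theorem~\ref{localmazya} — namely that $\Omega\subset\mathbb{R}^3$ is of class $C^{0,1}_M(\mathcal{A})$ and the gradients $\nabla g_j$ of its profile functions admit a common non-decreasing modulus of continuity $\omega$ satisfying the Dini-type bound \eqref{dini} with constant $D$ — there exists $\tilde C>0$ depending only on $N=3$, $\mathcal{A}$ and $D$ such that if $M<\tilde C^{-1}$ then the a priori estimate \eqref{apriori} holds for the weak solutions of the Dirichlet problem \eqref{mainequi0}, with the constant in \eqref{apriori} controlled in terms of $N=3$, $\mathcal{A}$ and $D$. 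Recall that this corollary is itself obtained by fixing $\delta$ small enough for Theorem~\ref{Mazyaapriori} to apply and then using Theorem~\ref{localmazya} to place $\Omega$ — possibly after passing to a refinement of $\mathcal{A}$ — in the class $\mathcal{M}^{3/2}_2(\delta,\mathcal{A})$.

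Next I would apply Theorem~\ref{mainequi}, which is available precisely because $\Omega$ is of class $C^{0,1}_M(\mathcal{A})$: it asserts that the validity of the a priori estimate \eqref{apriori} is equivalent to the validity of the Gaffney inequality \eqref{gaff} on $\Omega$, the two constants being quantitatively linked, with the Gaffney constant depending on the constant in \eqref{apriori}, on $M$ and on $\mathcal{A}$. Combining this with the previous step, \eqref{gaff} holds as soon as $M<\tilde C^{-1}$. Setting $C:=\tilde C$, the hypothesis $M<C^{-1}$ of the statement is exactly what is required; moreover, since $M$ is then bounded by $C^{-1}$, a quantity depending only on $\mathcal{A}$ and $D$ (as $N=3$ is fixed), the dependence of the Gaffney constant on $M$ coming from Theorem~\ref{mainequi} is absorbed, and the final Gaffney constant depends only on $\mathcal{A}$ and $D$, as claimed.

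The proof has essentially no obstacle of its own: all the substance sits inside Corollary~\ref{aprioricorol} — hence in Theorems~\ref{Mazyaapriori} and~\ref{localmazya}, i.e. in the Maz'ya–Shaposhnikova multiplier machinery and in the estimate of the quantity $D_{3/2}(g_j,B_\rho)$ by the Dini integral \eqref{dini} — and in the non-trivial implication of Theorem~\ref{mainequi}, which relies on the Birman–Solomyak-type orthogonal decomposition $u=Pu+Qu$. The only point that requires a line of care is the bookkeeping of constants: one must be sure that the refinement of the atlas invoked when applying Theorem~\ref{localmazya} does not spoil the assertion that the final constants depend only on $\mathcal{A}$ and $D$, but this is already guaranteed by the formulation of Corollary~\ref{aprioricorol}. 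I would also remark, for later use, that since the constant $C$ thus produced is the same for all domains sharing a fixed atlas $\mathcal{A}$, a fixed Lipschitz bound $M<C^{-1}$, and a common modulus of continuity $\omega$ satisfying \eqref{dini}, the resulting Gaffney inequality holds with a constant uniform over any such family — which is exactly the form of uniform Gaffney inequality needed in the spectral stability results of Section~\ref{sezione4}.
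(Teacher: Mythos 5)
Your proposal is correct and follows exactly the route the paper takes: Corollary~\ref{aprioricorol2} is stated there as an immediate consequence of chaining Corollary~\ref{aprioricorol} (a priori estimate \eqref{apriori} under the smallness condition on $M$) with the equivalence of Theorem~\ref{mainequi}, which is precisely your argument. Your extra remarks on constant bookkeeping and uniformity over a family are consistent with how the paper uses the result in Section~\ref{unifsec}.
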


\subsection{Applications to families of oscillating boundaries}

It is clear that in order to apply Theorem~\ref{localmazya} and Corollaries~\ref{aprioricorol}, \ref{aprioricorol2}, it suffices to assume that the gradients $\nabla g_j$ of the  functions $g_j$ describing the boundary of a domain $\Omega$ as in Definition~\ref{atlas} are of class $C^{0,\beta}$ with $\beta \in ]1/2, 1]$, that is 
\begin{equation} \label{holdernablag}
|\nabla g_j(\bar x)-\nabla g_j(\bar y)|\le  K  |\bar x-\bar y |^{\beta}
\end{equation}
for some positive constant  $K$ and  all $\bar x, \bar y\in W_j$, and that the functions $g_j$ have sufficiently  small Lipschitz constants.  As we have already mentioned, in principle, the second condition is not  a big 
obstruction to the application of these results, since for a domain of class $C^1$ one can find a sufficiently refined atlas, adapted to the tangent planes of a finite number of boundary points, such that the $C^1$ norms, hence the Lipschitz constants, of the profile functions $g_j$ are arbitrarily close to zero.
Thus, we can apply our results to uniform classes of domains of class $C^{1,\beta}$ since condition \eqref{dini} would be satisfied exactly because $\beta >1/2$ (as we have said, here what matters is the behaviour of the modulus of continuity $\omega (t)$ for $t$ close to zero  and one can assume directly that $\omega(t)$ is constant for $t$ big enough). 

Thus, we can prove the following result. 
Note that here the domains $\Omega_\epsilon$ are assumed to be of class $C^{1,1}$ and that they belong to the uniform class $C^{1,\beta}_K(\mathcal{A})$ with $K>0$ fixed, which in particular implies the validity of \eqref{holdernablag} for  all functions $g_{\epsilon,j}$ and all $\epsilon>0$. (Recall that the operators $S_{\epsilon}$ are defined in the beginning of Section~\ref{sezione4}.)

\begin{theorem} \label{principaleholder}
Let $\mathcal{A}$ be an atlas in $\mathbb{R}^3$ and $\{\Omega_\epsilon\}_{\epsilon >0}$ be a family of bounded domains of class $C^{1,1}(\mathcal{A})$ converging to a bounded domain $\Omega$ of class $C^{1,1}(\mathcal{A})$ as $\epsilon\to 0$, in the sense that  condition \eqref{assumptions} holds. Suppose that  $ \Omega $ is of class $C^{0,1}_M(\mathcal{A})$ with $M$ small enough as in Corollary~\ref{aprioricorol2}. Suppose also that all domains $\Omega_{\epsilon}$ are of class $C^{1,\beta}_K(\mathcal{A})$ with the same parameters $\beta \in ]1/2,1]$ and $K>0$. Then the uniform Gaffney inequality \eqref{unigaff}  holds  provided $\epsilon$ is small enough. Moreover,
$S_\epsilon \xrightarrow[]{C}S$  as $\epsilon \to 0$. In particular, spectral stability occurs:  the eigenvalues of the operator $S_{\epsilon}$ converge to the eigenvalues of the operator $S_{0}$, and   the eigenfunctions of the operator $S_{\epsilon}$ $E$-converge to the eigenfunctions of the operator $S_{0}$ as $\epsilon \to 0$. 
\end{theorem}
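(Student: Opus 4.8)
The plan is to split the statement into two pieces. First I would establish the uniform Gaffney inequality \eqref{unigaff} for the family $\{\Omega_\epsilon\}_{\epsilon>0}$, for $\epsilon$ small enough; then the compact convergence $S_\epsilon\xrightarrow[]{C}S$ and the stability of eigenvalues and eigenfunctions follow at once from Theorem~\ref{principale}, since all of its remaining hypotheses are already in force --- the $\Omega_\epsilon$ and $\Omega$ lie in $C^{1,1}(\mathcal{A})$ and $\Omega_\epsilon\to\Omega$ in the sense of \eqref{assumptions}. Thus the entire content of the theorem is the uniform Gaffney inequality, and everything reduces to producing a Gaffney constant that does not depend on $\epsilon$.

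To obtain it I would apply Corollary~\ref{aprioricorol2} (equivalently, Theorem~\ref{localmazya} with $N=3$ together with Theorems~\ref{Mazyaapriori} and~\ref{mainequi}) to each $\Omega_\epsilon$ separately, checking that its hypotheses hold with parameters uniform in $\epsilon$. There are three points. (1) A common modulus of continuity for the gradients of the profile functions: since every $\Omega_\epsilon$ belongs to $C^{1,\beta}_K(\mathcal{A})$ with the same $\beta\in ]1/2,1]$ and $K$, condition (iv) of Definition~\ref{atlas} gives $|\nabla g_{\epsilon,j}(\bar x)-\nabla g_{\epsilon,j}(\bar y)|\le K|\bar x-\bar y|^{\beta}$ uniformly in $j$ and $\epsilon$, so one may take the single non-decreasing modulus $\omega(t):=\min\{Kt^{\beta},Kd^{\beta}\}$, with $d$ a bound for the chart diameters (after extending the $g_{\epsilon,j}$ to $\mathbb{R}^{N-1}$ as in the convention of Section~\ref{unifsec}, without deteriorating this bound by more than a fixed factor; recall that only the behaviour of $\omega$ near $0$ matters). (2) The Dini-type condition \eqref{dini}: because $\beta>1/2$ the only delicate part of $\int_0^{\infty}(\omega(t)/t)^2\,dt$ is $K^2\int_0^1 t^{2\beta-2}\,dt=K^2/(2\beta-1)$, so \eqref{dini} holds with a constant $D$ depending only on $K,\beta,d$, hence not on $\epsilon$. (3) Smallness of the Lipschitz constants: the threshold in Corollary~\ref{aprioricorol2} is a number $C_0^{-1}$ with $C_0=C_0(\mathcal{A},D)$ now fixed, and the hypothesis ``$M$ small enough as in Corollary~\ref{aprioricorol2}'' means precisely $M<C_0^{-1}$; by part (iii) of \eqref{assumptions}, applied with a multi-index of length one, $\norm{\nabla(g_{\epsilon,j}-g_j)}_{L^\infty(W_j)}=o(\kappa_\epsilon^{1/2})$, in particular it tends to $0$, whence $\norm{\nabla g_{\epsilon,j}}_{L^\infty(W_j)}\le M+o(1)<C_0^{-1}$ for $\epsilon$ small enough. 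Corollary~\ref{aprioricorol2} applied to each such $\Omega_\epsilon$ then yields the Gaffney inequality \eqref{gaff} on $\Omega_\epsilon$, and the resulting constant depends, via Theorem~\ref{mainequi}, only on $N=3$, on the (refined) atlas, on $D$ and on the uniform Lipschitz bound $C_0^{-1}$ --- i.e. not on $\epsilon$; this is exactly \eqref{unigaff}.

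The step I expect to require the most care --- the main obstacle --- is the atlas \emph{refinement} that Theorem~\ref{localmazya} permits, since a priori the refined atlas $\widetilde{\mathcal{A}}$ for which $\Omega_\epsilon\in\mathcal{M}^{3/2}_2(\delta,\widetilde{\mathcal{A}})$ could depend on $\epsilon$ and thereby ruin the uniformity. One has to observe that the inequality \eqref{localmazya0} holds for all the $\Omega_\epsilon$ with one and the same bound $c\delta$ and one and the same decay rate $O(\rho^{1/2})$ (the implied constant depending only on $N$ and $D$), so that the refinement furnished by \cite[Cor.~14.6.1]{Mazya} in the proof of Theorem~\ref{localmazya} can be chosen the same for every $\Omega_\epsilon$ --- that is, a single $\widetilde{\mathcal{A}}$ works for the whole family. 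It then remains to note that passing from $\mathcal{A}$ to $\widetilde{\mathcal{A}}$ disturbs nothing: restricting the profile functions to smaller boxes can only decrease the sup-norms appearing in Definition~\ref{atlas}(iv) and in \eqref{assumptions}, so $\Omega_\epsilon$ and $\Omega$ stay of class $C^{1,1}(\widetilde{\mathcal{A}})$, their uniform membership in $C^{1,\beta}_K(\widetilde{\mathcal{A}})$ persists with the same $K$, and the convergence \eqref{assumptions} continues to hold with the same sequence $\kappa_\epsilon$ relative to $\widetilde{\mathcal{A}}$. One then runs the whole argument --- including the concluding appeal to Theorem~\ref{principale} --- with $\widetilde{\mathcal{A}}$ in place of $\mathcal{A}$, which completes the proof. (As noted earlier in this section and in the Introduction, one may also take the atlas adapted from the outset to the tangent planes of finitely many boundary points of $\Omega$, so that the smallness hypothesis on $M$ is, in practice, not a genuine restriction.)
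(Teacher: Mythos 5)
Your proof is correct and follows essentially the same route as the paper's: reduce everything to the uniform Gaffney inequality, obtain it from Corollary~\ref{aprioricorol2} by noting that the uniform $C^{1,\beta}_K$ bound with $\beta>1/2$ supplies an $\epsilon$-independent modulus of continuity satisfying \eqref{dini} while part (iii) of \eqref{assumptions} keeps the Lipschitz constants of the $g_{\epsilon,j}$ below the smallness threshold for $\epsilon$ small, and then conclude via Theorem~\ref{principale}. Your explicit handling of the possible $\epsilon$-dependence of the atlas refinement permitted by Theorem~\ref{localmazya} is a detail the paper's much terser proof leaves implicit, and you resolve it correctly.
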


\begin{proof}
Since $\Omega_\epsilon$ converges to  $\Omega$ as $\epsilon \to 0$  in the sense that  condition \eqref{assumptions} holds, it follows that the gradients of the functions $g_{\epsilon , j}$ describing the boundary of $\Omega_{\epsilon }$ converge uniformly to the gradients  of the functions $g_{ j}$ describing the boundary of $\Omega$. Thus,   $ \Omega_{\epsilon} $ is of class $C^{0,1}_M(\mathcal{A})$ provided $\epsilon $ is small enough. By the discussion  above, Corollary~\ref{aprioricorol2} is applicable and  the uniform Gaffney inequality \eqref{unigaff}  holds provided $\epsilon$ is small enough.
Then the last part of the statement follows by Theorem \ref{principale}.
\end{proof} 

A prototype for the classes of domains under discussion is given by domains designed by  profile functions
often used in homogenization theory, in particular in the study of thin domains. Namely, assume  that one of the profile functions $g_{\epsilon,j}$, call it $g_\epsilon$, is 
of the form 
\begin{equation}
g_\epsilon(\bar x )=\epsilon^{\alpha}b(\bar x/\epsilon )
\end{equation}
for some function $b$ of class $C^{1,1}(\mathbb{R}^2 )$ and $\alpha >0$, and  assume that the gradient of $b$ is bounded. If $\omega_{\nabla b}$ is a (non-decreasing) modulus of continuity  of  $\nabla b$, then we have 
$$
|\nabla g_\epsilon(\bar x )-\nabla g_\epsilon(\bar y)| = \epsilon^{\alpha-1}|  \nabla b(\bar x/\epsilon )- \nabla b(\bar y/\epsilon )|\le \epsilon^{\alpha-1}\omega_{\nabla b}
\biggl(\frac{\bar x-\bar y }{\epsilon} \biggr)\, ,
$$ 
hence the function $\omega $ to be considered in \eqref{modulus} is given by $\omega(t)= \epsilon^{\alpha-1}\omega_{\nabla b}(t/\epsilon )$. Observe that 
\begin{equation}\label{lastint}
\int_0^{\infty}   \biggl(\frac{\omega(t)}{t}\biggr)^2dt=\epsilon^{2\alpha -2}\int_0^{\infty } \left( \frac{\omega_{\nabla b}(t/\epsilon ) }{t} \right)^2 dt=
\epsilon^{2\alpha -3}\int_0^{\infty } \left( \frac{\omega_{\nabla b}(s ) }{s} \right)^2 ds\, .
\end{equation}
Moreover, since $b$ is assumed to be of  class $C^{1,1}$, we have that  $\omega_{\nabla b}(t)\le ct $ for $t$ in a neighborhhood of zero. Thus, if $\alpha \geq 3/2$ and $\epsilon_0$ is any fixed positive constant, it follows that 
that 
\begin{equation}\label{lastint00}
\sup_{\epsilon\in  ]0, \epsilon_0]}
\epsilon^{2\alpha -3}\int_0^{\infty } \left( \frac{\omega_{\nabla b}(s ) }{s} \right)^2 ds \ne \infty \, .
\end{equation}
Since the gradient of $g_\epsilon$ is arbitrarily  close to zero for $\epsilon$ sufficiently small, we have that Theorem~\ref{localmazya} and Corollaries~\ref{aprioricorol}, \ref{aprioricorol2} are applicable and the Gaffney inequality \eqref{unigaff} holds for all $\epsilon$ sufficiently small, with a constant $C>0$ independent of $\epsilon$.   The same arguments can be applied to families of profile functions of the type 
$$
g_\epsilon(\bar x )=\epsilon^{\alpha}b(\bar x/\epsilon )\psi(\bar x)
$$
where $b$ is as above and $\psi $ is a fixed $C^{1,1}$ function with bounded gradient. Thus, we can state the following stability result concerning a local perturbation of a domain $\Omega$.

\begin{theorem}\label{mainhomo}  Let $W$ be a bounded open rectangle in  ${\mathbb{R}}^2$, $b\in C^{1,1}(\mathbb{R}^{2})$ with bounded gradient, $b\ge 0$,  and $\psi \in C^{1,1}_c(W)$, $\alpha > 3/2$. Assume that $\Omega $  and $\Omega_{\epsilon}$, $\epsilon > 0$ are domains of class $C^{1,1}$ in ${\mathbb{R}}^3$ satisfying the following condition:
\begin{itemize}
\item[(i)]  $\Omega \cap (W\times ]-1,1[)=\{(\bar x, x_3)\in {\mathbb{R}}^3:\ \bar x\in W,\ -1<x_3<0 \} $;
\item[(ii)]  $\Omega_{\epsilon} \cap (W\times ]-1,1[ )=\{(\bar x, x_3)\in {\mathbb{R}}^3:\ \bar x\in W,\ -1<x_3<\epsilon^{\alpha}b(\bar x/\epsilon )\psi(\bar x) \} $
where $b\in C^{1,1}(\mathbb{R}^{2})$ has bounded gradient, and $\psi \in C^{1,1}_c(W)$;
\item[(iii)] $\Omega \setminus  (W \times ]-1,1[) =  \Omega_{\epsilon } \setminus  (W \times ]-1,1[) $;
\end{itemize}
Then  the family  $\{\Omega_{\epsilon} \}_{\epsilon >0}$ converges to $\Omega $  in the sense that  condition \eqref{assumptions} holds. Moreover,  the uniform Gaffney inequality \eqref{unigaff} holds and $S_\epsilon \xrightarrow[]{C}S$  as $\epsilon \to 0$.   In particular, spectral stability occurs:  the eigenvalues of the operator $S_{\epsilon}$ converge to the eigenvalues of the operator $S_0$, and   the eigenfunctions of the operator $S_{\epsilon}$ $E$-converge to the eigenfunctions of the operator $S_{0}$ as $\epsilon \to 0$. 
\end{theorem}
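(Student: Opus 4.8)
The plan is to verify the hypotheses of Theorem~\ref{principaleholder} and invoke it. The only delicate point is the choice of a common atlas $\mathcal{A}$. By (i) and (iii) the portion of $\partial\Omega$ lying in $W\times ]-1,1[$ is the flat face $W\times\{0\}$, and since $\psi\in C^{1,1}_c(W)$ the set $\Omega_\epsilon$ coincides with $\Omega$ near the lateral walls of the box and everywhere outside it, while $\partial\Omega_\epsilon\cap(W\times ]-1,1[)$ is the graph $x_3=g_\epsilon(\bar x):=\epsilon^\alpha b(\bar x/\epsilon)\psi(\bar x)$. I would take $\mathcal{A}$ to contain finitely many cuboids aligned with the $x_3$-axis that cover the portion of $\partial\Omega$ contained in $W\times ]-1,1[$; in those charts the profile function of $\Omega$ vanishes identically and that of $\Omega_\epsilon$ equals $g_\epsilon$. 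The remaining charts cover $\partial\Omega$ away from the box, where $\Omega_\epsilon\equiv\Omega$; since $\Omega$ is of class $C^1$ we may, by refining those charts and adapting them to the tangent planes at sufficiently many boundary points, assume that the $C^1$-norms of the corresponding profile functions are as small as we wish. With $\mathcal{A}$ fixed in this way, each $\Omega_\epsilon$ (with $\epsilon$ fixed) and $\Omega$ are of class $C^{1,1}(\mathcal{A})$.

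Next I would check condition \eqref{assumptions}. The only charts with $g_{\epsilon,j}\ne g_j$ are those over the box, where $g_j\equiv 0$ and $g_{\epsilon,j}=g_\epsilon$. Since $b\in C^{1,1}(\mathbb{R}^2)$ has bounded gradient and $\psi\in C^{1,1}_c(W)$, the Leibniz rule yields $\|D^\beta g_\epsilon\|_{L^\infty}\le c\,\epsilon^{\alpha-|\beta|}$ for all $|\beta|\le 2$. Taking $\kappa_\epsilon:=\epsilon^\gamma$, conditions (i)--(iii) of \eqref{assumptions} amount to $\gamma<\alpha$ (for $\epsilon$ small), $\gamma>0$, and $\epsilon^{\alpha-|\beta|}/\kappa_\epsilon^{3/2-|\beta|}\to 0$ for $|\beta|=0,1,2$, i.e. to $\max\{4-2\alpha,0\}<\gamma<2\alpha/3$. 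An elementary computation shows that an admissible $\gamma$ exists if and only if $\alpha>3/2$; fixing one such $\gamma$, \eqref{assumptions} holds, so $\Omega_\epsilon\to\Omega$ in the required sense.

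It remains to check that $\Omega\in C^{0,1}_M(\mathcal{A})$ with $M$ small enough as in Corollary~\ref{aprioricorol2} and that the domains $\Omega_\epsilon$ lie in a common class $C^{1,\beta}_K(\mathcal{A})$. Set $\beta:=\min\{1,\alpha-1\}$, which lies in $]1/2,1]$ because $\alpha>3/2$. Writing $\nabla g_\epsilon=\epsilon^{\alpha-1}(\nabla b)(\cdot/\epsilon)\psi+\epsilon^\alpha b(\cdot/\epsilon)\nabla\psi$, each summand is bounded by $c\epsilon^{\alpha-1}$ and Lipschitz with constant $c\epsilon^{\alpha-2}$, so the elementary interpolation $\min\{A,Bt\}\le A^{1-\beta}(Bt)^\beta$ gives $|\nabla g_\epsilon(\bar x)-\nabla g_\epsilon(\bar y)|\le c\,\epsilon^{\alpha-1-\beta}|\bar x-\bar y|^\beta\le K|\bar x-\bar y|^\beta$ with $K$ independent of $\epsilon$, since $\beta\le\alpha-1$; moreover $\|\nabla g_\epsilon\|_{L^\infty}\le c\epsilon^{\alpha-1}\to 0$. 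Together with the smallness of the $C^1$-norms of the profile functions of $\Omega$ built into $\mathcal{A}$, this shows that for $\epsilon$ small $\Omega\in C^{0,1}_M(\mathcal{A})$ with $M$ small and $\Omega_\epsilon\in C^{1,\beta}_K(\mathcal{A})$; the associated modulus of continuity $\omega(t)=c\min\{t^\beta,1\}$ satisfies \eqref{dini} because $\beta>1/2$, exactly as in the discussion around \eqref{lastint}--\eqref{lastint00}. Theorem~\ref{principaleholder} then gives the uniform Gaffney inequality \eqref{unigaff}, the compact convergence $S_\epsilon\xrightarrow{C}S$ as $\epsilon\to 0$, and hence the convergence of the eigenvalues and the $E$-convergence of the eigenfunctions.

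The crux, and the main obstacle, is the joint scaling bookkeeping: the convergence condition \eqref{assumptions} forces the exponent $\gamma$ of $\kappa_\epsilon=\epsilon^\gamma$ to satisfy $\max\{4-2\alpha,0\}<\gamma<2\alpha/3$, while the uniform $C^{1,\beta}$-control of the perturbed profiles forces $1/2<\beta\le\alpha-1$; both families of constraints are simultaneously feasible precisely in the regime $\alpha>3/2$, and arranging them together, rather than any single estimate, is the substantive part of the argument. A secondary technicality is the construction of the common atlas, where one must refine the charts away from the perturbation to make the Lipschitz constants of the profile functions of $\Omega$ small enough for Corollary~\ref{aprioricorol2} (whose constant depends on the then-fixed $\mathcal{A}$), while retaining a chart aligned with the flat face so that the functions $g_\epsilon$ appear there as genuine graphs.
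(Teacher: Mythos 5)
Your argument is correct and reaches the conclusion along essentially the same path as the paper: fix a common atlas with one chart aligned to the flat face, verify \eqref{assumptions} with $\kappa_\epsilon$ a power of $\epsilon$, establish the uniform Gaffney inequality through the Maz'ya--Shaposhnikova machinery, and conclude by the abstract stability theorem. The one genuine difference is the route to \eqref{unigaff}. You interpolate between $\norm{\nabla g_\epsilon}_\infty\le c\epsilon^{\alpha-1}$ and $\operatorname{Lip}(\nabla g_\epsilon)\le c\epsilon^{\alpha-2}$ to get a uniform $C^{1,\beta}$ bound with $\beta=\min\{1,\alpha-1\}>1/2$ and then invoke Theorem~\ref{principaleholder}; the paper instead rescales the modulus of continuity, $\omega(t)=\epsilon^{\alpha-1}\omega_{\nabla b}(t/\epsilon)$, computes the Dini-type integral as in \eqref{lastint}--\eqref{lastint00}, and applies Corollary~\ref{aprioricorol2} followed by Theorem~\ref{principale} directly. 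The two are equivalent for $\alpha>3/2$, but the paper's scaling argument is marginally sharper: the rescaled integral in \eqref{dini} stays uniformly bounded even at $\alpha=3/2$ (cf.\ Remark~\ref{finalremark}), whereas your interpolation degenerates there since $\beta=\alpha-1$ must exceed $1/2$ strictly. Your explicit bookkeeping of the admissible exponents $\max\{4-2\alpha,0\}<\gamma<2\alpha/3$ for $\kappa_\epsilon=\epsilon^\gamma$ is a welcome addition -- the paper simply takes $\kappa_\epsilon=\epsilon^{2\tilde\alpha/3}$ with $\tilde\alpha\in\,]3/2,\alpha[$ and cites an earlier computation -- and your choice is consistent with theirs. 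One minor caveat, shared with the paper: the bound $\norm{g_\epsilon}_{L^\infty}\le c\epsilon^\alpha$ uses that $b$ itself is bounded (on the rescaled support of $\psi$ a bounded gradient alone only gives $c\epsilon^{\alpha-1}$), which is implicit in the intended setting of bounded or periodic profiles.
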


\begin{proof}
By assumptions, the domains $\Omega$ and $\Omega_{\epsilon}$ belong to the same atlas class  $C^{1,1}({\mathcal{A}})$ for a   suitable atlas ${\mathcal{A}}$, and $W\times ]-1,1[$ is one of the local charts of ${\mathcal{A}}$. In particular, the profile functions describing the boundaries of $\Omega$ and $\Omega_{\epsilon}$ in that chart are given by  $g(\bar x)= 0$ and  $g_{\epsilon}= \epsilon^{\alpha}b(\bar x/\epsilon )\psi(\bar x)$ for all $\bar x\in W$.   

  As in the proof of \cite[Thm.~7.4]{arrlam}, if $\tilde \alpha \in ]3/2, \alpha[$ is fixed  then one can easily check that  conditions \eqref{assumptions} are satisfied with
  $k_{\epsilon}=\epsilon^{2\tilde \alpha /3}$.   By \eqref{apriori} and the discussion above, it follows that the Gaffney inequality \eqref{gaff} holds with a constant $C$ independent of $\epsilon$, provided $\epsilon$ is sufficiently small. To complete the proof it suffices to apply  Theorem~\ref{principale}. 
\end{proof}

\begin{remark}\label{finalremark} It is clear that condition \eqref{lastint00} is satisfied also in the case $\alpha =3/2$. Thus the uniform Gaffney inequality  \eqref{unigaff} holds also in the case  $\alpha =3/2$ in Theorem~\ref{mainhomo}. However, in this case the convergence of $\Omega_{\epsilon}$ to $\Omega$ in the sense of \eqref{assumptions}  is not guaranteed hence we cannot directly deduce that we have spectral stability. Thus, another method has to be used in the analysis of the stability problem for $\alpha =3/2$. For example, in the case of non-constant periodic functions $b$ one could use the unfolding method as in \cite{casado}, adopted also in  \cite{arrferlam, arrlam, ferralamb, ferlam}: in those papers, for $\alpha =3/2$ we have  spectral instability in the sense that the limiting problem differs from the given problem in $\Omega$ by a strange term appearing in the boundary conditions (as often happens in homogenization problems). We plan to discuss the details of this problem for the $curl curl $ operator in a forthcoming paper, but we can already mention that a preliminary formal analysis would indicate  that no strange limit appears in the limiting problem for $\alpha =3/2$. 
 On the other hand, at the moment we are not able to formulate any conjecture for the case $\alpha <3/2$ although, on the base of the  results of \cite{casado} concerning the  Navier-Stokes system, a 
degeneration phenomenon (to
Dirichlet boundary conditions) 
could not be excluded. 
\end{remark}

{\bf Acknowledgments:} The authors are also very thankful to Dr. Francesco Ferraresso and Prof. Ioannis G. Stratis for useful discussions and references.
The authors acknowledge financial support from the research project BIRD191739/19  ``Sensitivity analysis of partial differential equations in the mathematical theory of electromagnetism'' of the University of Padova. 
The authors are members of the Gruppo Nazionale per l'Analisi Matematica, la Probabilit\`a e le loro Applicazioni (GNAMPA) of the Istituto Nazionale di Alta Matematica (INdAM).

\vspace{24pt}

\noindent Pier Domenico Lamberti\\
Dipartimento di  tecnica e gestione dei sistemi industriali (DTG)\\
University of Padova\\
Stradella S. Nicola 3 \\
36100 Vicenza\\ 
Italy\\
 e-mail: lamberti@math.unipd.it\\

\noindent  Michele Zaccaron\\
Dipartimento di Matematica `Tullio Levi-Civita'\\
University of Padova\\
 Via Trieste 63\\
35121 Padova\\ 
Italy\\
 e-mail:  zaccaron@math.unipd.it\\

\end{document}